\newcommand{\R}{\Bbb{R}}
\newcommand{\N}{\Bbb{N}}
\newcommand{\s}{\Bbb{S}}
\newtheorem{teor}{Theorem}[section]
\newtheorem{propo}{Proposition}[section]
\newtheorem{lema}{Lemma}[section]
\newtheorem{cor}{Corollary}[section]
\newtheorem{ex}{Example}[section]
\newcommand{\n}{\noindent}
\begin{document}

\title{

Sharp estimates for fundamental frequencies of elliptic operators generated by asymmetric seminorms in low dimensions
\footnote{2020 Mathematics Subject Classification: 49J20, 49J30, 47J20}
\footnote{Key words: Spectral optimization, anisotropic optimization, shape optimization, isoanisotropic inequalities, anisotropic rigidity, anisotropic extremizers}
}

\author{\textbf{Julian Haddad \footnote{\textit{E-mail addresses}:
jhaddad@us.es (J. Haddad)}}\\ {\small\it Departamento de Análisis Matemático,
Universidad de Sevilla,}\\ {\small\it Apartado de Correos 1160, Sevilla 41080, Spain}\\
\textbf{Raul Fernandes Horta \footnote{\textit{E-mail addresses}: raul.fernandes.horta@gmail.com (R. F. Horta)}}\\ {\small\it Departamento de Matem\'{a}tica,
Universidade Federal de Minas Gerais,}\\ {\small\it Caixa Postal 702, 30123-970, Belo Horizonte, MG, Brazil}\\
\textbf{Marcos Montenegro \footnote{\textit{E-mail addresses}: montene@mat.ufmg.br (M. Montenegro)}}\\ {\small\it Departamento de Matem\'{a}tica,
Universidade Federal de Minas Gerais,}\\ {\small\it Caixa Postal 702, 30123-970, Belo Horizonte, MG, Brazil}
}

\date{}{

\maketitle

\markboth{abstract}{abstract}

\hrule \vspace{0,2cm}

\n {\bf Abstract}

We establish new sharp asymmetric Poincaré inequalities in one-dimension with the computation of optimal constants and characterization of extremizers. Using the one-dimensional theory we develop a comprehensive study on fundamental frequencies in the plane and related spectral optimization in the very general setting of positively homogeneous anisotropies.

\vspace{0.5cm}

\hrule

\section{Overview and main statements}\label{sec1}

A line of research that has gained a lot of attention within spectral optimization is the study of sharp uniform estimates of fundamental frequencies associated to a family of elliptic operators defined in a given bounded domain. In precise terms, let $\Omega \subset \R^n$ be a bounded domain and consider a set ${\cal E}(\Omega)$ of second-order elliptic operators on $\Omega$ so that each operator ${\cal L} \in {\cal E}(\Omega)$ admits at least a non-zero fundamental frequency $\lambda_1({\cal L})$ with respect some a priori fixed boundary condition (e.g. Dirichlet, Neumann, Robin).

Assume that there exists a way of ``measuring" each element ${\cal L} \in {\cal E}(\Omega)$ by some quantity denoted say by ${\cal M}({\cal L})$. Two cornerstone questions in spectral optimization related to ${\cal E}(\Omega)$ are:
\begin{enumerate}[label=(\Roman*)]
\item\label{itemA} Are there explicit optimal constants $\Lambda^{\min}(\Omega)$ and $\Lambda^{\max}(\Omega)$ such that
\begin{equation} \label{SSE}
\Lambda^{\min}(\Omega)\, {\cal M}({\cal L}) \leq \lambda_1({\cal L}) \leq \Lambda^{\max}(\Omega)\, {\cal M}({\cal L})
\end{equation}
for every operator ${\cal L} \in {\cal E}(\Omega)$?

\item\label{itemB} Does any inequality in \eqref{SSE} become equality for some operator ${\cal L} \in {\cal E}(\Omega)$?
\end{enumerate}
The left sharp inequality in \eqref{SSE} can be seen as an analytical counterpart of the classical Faber-Krahn geometric inequality for the fundamental frequency $\lambda_1(\Omega)$ of the Laplace operator $- \Delta$ (cf. \cite{F, Kr}):
\begin{equation} \label{FK1}
\vert B \vert^{2/n} \lambda_1(B) \, \vert \Omega \vert^{-2/n} \leq \lambda_1(\Omega),
\end{equation}
where $B$ denotes the unit $n$-Euclidean ball and $\vert \cdot \vert$ stands for the Lebesgue measure of a measurable subset of $\R^n$. In contrast to \eqref{SSE}, the operator $- \Delta$ is fixed in \eqref{FK1}, whereas the bounded domain $\Omega$ varies freely.

Other important inequalities motivated by physical applications and closely related to \eqref{FK1} regarding the first non-zero eigenvalue of the Laplace operator under different boundary conditions are: Bossel-Daners inequality \cite{B, BD, D, DK, DPG, DPP} (Robin condition), Szegö-Weinberger inequality \cite{BH, S, W} (Neumann condition) and Brock-Weinstock inequality \cite{Br, We} (Steklov condition). Faber-Krahn type inequalities have also been proved for other elliptic equations/operators as for example the Schrödinger equation \cite{DCH}, the $p$-Laplace operator \cite{AFT}, the affine $p$-Laplace operator \cite{HJM5}, anisotropic operators \cite{BFK}, the fractional Laplace operator \cite{BLP} and local/non-local type operators \cite{BDVV}.

There is a vast literature within spectral optimization addressed to questions closely related to \ref{itemA} and \ref{itemB} involving second-order linear elliptic operators. Most of the contributions can be found in the following long, but far from complete, list of references: regarding the Schrödinger operator we refer to \cite{CMZ, EK, E, K, KS, N, T, ZW} for the one-dimensional case and \cite{AM, BBV, BGRV, CGIKO, Eg, H, MRR} and Chapter 9 of \cite{H2}, for higher dimensions and for the conductivity operator we refer to \cite{Ba, CL, EKo, Tro}.

On the other hand, fairly little is known about spectral optimization with respect to nonlinear elliptic operators, the subject began to be addressed very recently in the works \cite{HM1} and \cite{HM2} where, in the first one, \ref{itemA} and \ref{itemB} are completely solved in the plane on the class ${\cal E}(\Omega)$ of elliptic operators of quadratic type and, in the second one on the class ${\cal E}(\Omega)$ of anisotropic elliptic operators generated by seminorms as well in the plane.

The present work investigates \ref{itemA} and \ref{itemB} for the quite general class ${\cal E}(\Omega)$ of anisotropic elliptic operators generated by asymmetric seminorms for any fixed bounded domain $\Omega$ in the line and in the plane.

In order to introduce the set ${\cal E}(\Omega)$ for a given bounded domain $\Omega \subset \R^n$ with $n = 1$ or $n = 2$, consider the class of functions, called {\it asymmetric anisotropies}:
\[
\mathscr{H} = \left\{H \colon \R^n \to \R \colon H \ \text{is non-negative, convex and positively $1$-homogeneous}\right\},
\]
where the space $\R^n$ is assumed to be endowed with the usual Euclidean norm denoted by $|\cdot|$.

Recall that a function $H$ is said to be convex if it satisfies
\[
H(tx + (1-t) y) \leq tH(x) + (1-t)H(y)
\]
for every $0 < t < 1$ and $x,y \in \R^n$, and is positively $1$-homogeneous if
\[
H(tx) = t H(x)
\]
for every $t > 0$ and $x \in \R^n$.

The elements of $\mathscr{H}$ are also called asymmetric seminorms or, if they are positive in $\R^n \setminus \{0\}$, asymmetric norms. For $n = 1$, we clearly have
\[
\mathscr{H} = \left\{H_{a,b} \colon\, H_{a,b}(t):= a t^+ + b t^-,\, a,b \geq 0\right\},
\]
where $t^\pm := \max\{\pm t,0\}$.

For each number $p > 1$ and anisotropy $H \in \mathscr{H}$, define the $H$-anisotropic $p$-Laplace operator as
\[
-\Delta_p^H u := -{\rm div}\left(H^{p-1}\left(\nabla u\right)\nabla H\left(\nabla u\right)\right).
\]
Corresponding to $H$, we have the anisotropic $L^p$ energy ${\cal E}_{p}^{H}: W^{1,p}_{0}(\Omega) \to \R$ defined by
\[
{\cal E}_{p}^{H}(u) := \int_{\Omega} H^{p}(\nabla u) \, dx,
\]
where $\nabla u$ stands for the weak gradient of $u$ (which as usual often will be denoted by $u^\prime$ when $n = 1$) and $W^{1,p}_{0}(\Omega)$ denotes the completion of compactly supported smooth functions in $\Omega$ with respect to the norm
\[
\lVert u \rVert_{W^{1,p}_{0}(\Omega)} = \left(\int_{\Omega} | \nabla u|^{p} \, dx\right)^{\frac{1}{p}}.
\]
The least energy level associated to ${\cal E}_{p}^{H}$ on the unit sphere in $L^p(\Omega)$ is defined as
\begin{equation}\label{frequency}
\lambda_{p}^{H}(\Omega) := \inf\left\{{\cal E}_{p}^{H}(u) \colon\ u \in W^{1,p}_0(\Omega), \ \lVert u \rVert_{p} = 1\right\},
\end{equation}
where
\[
\lVert u \rVert_p = \left(\int_{\Omega} |u|^{p} \, dx\right)^{\frac{1}{p}}.
\]
Knowing whether $\lambda_{p}^{H}(\Omega)$ is a fundamental frequency associated to the operator $-\Delta_p^H$ on $W_0^{1,p}(\Omega)$ is a problem to be discussed in the paper and, as we shall see, the answer depends on the regularity of $H$. By a fundamental frequency we mean that there exists a non-zero function $u_p \in W^{1,p}_0(\Omega)$ such that $\nabla H\left(\nabla u_p\right)$ is well-defined almost everywhere, is Lebesgue measurable in $\Omega$ and such that
\[
\int_{\Omega} H^{p-1}\left(\nabla u_p\right)\nabla H\left(\nabla u_p\right)\cdot\nabla\varphi\, dx = \lambda_{p}^{H}(\Omega)\int_{\Omega}|u_p|^{p-2}u_p\varphi\, dx
\]
for every test function $\varphi \in W^{1,p}_0(\Omega)$. The finiteness of the above left-hand side is immediate since asymmetric anisotropies $H$ are Lipschitz.

The set of interest ${\cal E}(\Omega)$ is given by
\[
{\cal E}(\Omega):= \{-\Delta_p^H:\, H \in \mathscr{H} \setminus \{0\}\ \text{such that}\ \lambda_{p}^{H}(\Omega)\ \text{is a fundamental frequency}\}.
\]
Observe that $-\Delta_p^H \in {\cal E}(\Omega)$ for every asymmetric norm $H$ of class $C^1$ on $\R^n \setminus \{0\}$. In particular, for $n = 1$, ${\cal E}(\Omega)$ contains the one-dimensional asymmetric $p$-Laplace operator $-\Delta_p^{H_{a,b}}$, where
\[
\Delta_p^{H_{a,b}}\, u = \left(a^{p}[(u')^{+}]^{p-1}-b^{p}[(u')^{-}]^{p-1}\right)'
\]
for constants $a, b > 0$, and for $n = 2$, ${\cal E}(\Omega)$ contains the 2-dimensional $p$-Laplace operator $- \Delta_p$ where $\Delta_p =: \Delta_p^H$ with $H(x,y) = \vert (x,y) \vert$, or explicitly
\[
\Delta_p u = {\rm div}(\vert \nabla u \vert^{p-2} \nabla u).
\]

Our first results concern the structure of the set ${\cal E}(\Omega)$. For instance, when $n = 2$ and $H$ is an asymmetric norm we prove that $-\Delta_p^H \in {\cal E}(\Omega)$ for every bounded domain $\Omega$ if and only if $H$ is of $C^1$ class in $\R^2 \setminus \{(0,0)\}$. For asymmetric seminorms we also show that the operator $-\Delta_p^H$ does not belong to ${\cal E}(\Omega)$ for any, $\Omega$ in the cases $H(t) = H_{a,0}(t)$ for $n = 1$ and $H(x,y) = H_{a,0}(y)$ for $n = 2$ when $a > 0$. Also, in this context, we provide a characterization of all domains $\Omega \subset \R^2$ such that $-\Delta_p^H \in {\cal E}(\Omega)$ for $H(x,y) = H_{a,b}(y)$ with $a, b > 0$.

We begin by presenting the complete picture in dimension $n = 1$. Assume without loss of generality that $\Omega = (-T, T)$ for some $T > 0$. For each $a, b \geq 0$, set $\lambda_{p}^{a,b}(\Omega) := \lambda_{p}^{H_{a,b}}(\Omega)$. Denote also by $\lambda_{1,p}(\Omega)$ the fundamental frequency of $- \Delta_p$ in $\Omega$ and let $\varphi_{p} \in W^{1,p}_0(\Omega)$ be the corresponding principal eigenfunction normalized by $\Vert \varphi_{p} \Vert_p = 1$.
Of course, we rule out the trivial case $a=b=0$, and by symmetry in $a$ and $b$ we may assume that $a>0$.
\begin{teor} \label{T1.1} Let $p > 1$ and let $T > 0$. For any $a > 0$ and $b \geq 0$, we have:
\begin{itemize}
\item[(i)] $\lambda_{p}^{a,b}(-T,T) = \displaystyle \left(\frac{a+b}{2}\right)^{p}\lambda_{1,p}(-T,T)$;
\item[(ii)] $u$ is an extremizer of \eqref{API} if and only if either $u = u_p$ or $u = v_p$, where (see Figure \ref{fig_functions_u})
\begin{equation*}
u_p(t) =
\begin{cases}
\begin{aligned}
     & c \varphi_{p}\left(\frac{T(t-t_{0})}{T+t_{0}}\right)\ \ \text{if} \ t \in (-T,t_{0}), \\
     & c \varphi_{p}\left(\frac{T(t-t_{0})}{T-t_{0}}\right)\ \ \text{if} \ t \in [t_{0},T)
\end{aligned}
\end{cases}
\end{equation*}
and
\begin{equation*}
v_p(t) =
\begin{cases}
\begin{aligned}
     & -c \varphi_{p}\left(\frac{T(t+t_{0})}{T-t_{0}}\right)\ \ \text{if} \ t \in (-T,-t_{0}), \\
     & -c \varphi_{p}\left(\frac{T(t+t_{0})}{T+t_{0}}\right)\ \ \text{if} \ t \in [-t_{0},T),
\end{aligned}
\end{cases}
\end{equation*}
where $c$ is any positive constant and
\[
t_{0} = \left(\frac{a-b}{a+b}\right)T.
\]
\end{itemize}
Consequently, since $u \in W_0^{1,p}(-T,T)$ only when $b > 0$, $\lambda_{p}^{a,b}(-T,T)$ is a fundamental frequency if and only if $b > 0$. In other words,
\[
{\cal E}(\Omega) = \{-\Delta_p^{H_{a,b}}:\, a, b > 0\}.
\]
\end{teor}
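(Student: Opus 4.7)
The plan is to obtain $\lambda_p^{a,b}(-T,T)=((a+b)/2)^p\lambda_{1,p}(-T,T)$ by matching upper and lower bounds on the Rayleigh quotient in \eqref{frequency}, and then to read off the extremizer classification and the description of $\mathcal{E}((-T,T))$ by tracking the equality cases. For the upper bound I would first check that $u_p\in W_0^{1,p}(-T,T)$ whenever $b>0$: the affine maps $\phi_1(t)=T(t-t_0)/(T+t_0)$ and $\phi_2(t)=T(t-t_0)/(T-t_0)$ carry $(-T,t_0)$ and $(t_0,T)$ onto $(-T,0)$ and $(0,T)$ bijectively, so $u_p$ is continuous, piecewise $C^1$, and vanishes at $\pm T$. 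The choice $t_0=(a-b)T/(a+b)$ is exactly what makes $T/(T+t_0)=(a+b)/(2a)$ and $T/(T-t_0)=(a+b)/(2b)$, so that $a u_p'=\tfrac{a+b}{2}c(\varphi_p'\circ\phi_1)$ on $(-T,t_0)$ and $b(-u_p')=\tfrac{a+b}{2}c(-\varphi_p'\circ\phi_2)$ on $(t_0,T)$. A change of variables on each piece plus the symmetry of $\varphi_p$ then combine to give $\mathcal{E}_p^{H_{a,b}}(u_p)=((a+b)/2)^p\lambda_{1,p}(-T,T)\|u_p\|_p^p$. For $b=0$ the same value is recovered in the limit by cutting $u_p$ off near $T$, so the upper bound holds in all cases.

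For the lower bound, I would fix $u\in W_0^{1,p}(-T,T)$ with $\|u\|_p=1$ and decompose $\{u\neq 0\}$ into its open connected components $(\alpha_i,\beta_i)$. By domain monotonicity $\lambda_{1,p}(\alpha_i,\beta_i)\geq\lambda_{1,p}(-T,T)$, so it is enough to prove the localized inequality
\[
\int_\alpha^\beta H_{a,b}(u')^p\,dt \geq \left(\frac{a+b}{2}\right)^p\lambda_{1,p}(\alpha,\beta)\int_\alpha^\beta u^p\,dt
\]
for $u\in W_0^{1,p}(\alpha,\beta)$ of constant sign (say $u\geq 0$). The main tool is the coarea formula: setting $\mu(s)=|\{u>s\}|$ and letting $N(s)$ denote the number of connected components of $\{u>s\}$,
\[
\int_\alpha^\beta H_{a,b}(u')^p\,dt=\int_0^{\|u\|_\infty}\sum_{j=1}^{N(s)}\bigl[a^p x_j(s)^{1-p}+b^p y_j(s)^{1-p}\bigr]\,ds,
\]
where $x_j,y_j>0$ are the reciprocals of $|u'|$ at the left and right endpoints of the $j$-th component of $\{u>s\}$ and $\sum_j(x_j+y_j)=-\mu'(s)$. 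A Lagrange-multiplier minimization of the inner sum under this constraint (optimum at $x_j:y_j=a:b$ with all $j$ equal) gives the sharp pointwise bound $\sum_j[\cdots]\geq(a+b)^p N(s)^p(-\mu'(s))^{1-p}\geq(a+b)^p(-\mu'(s))^{1-p}$, hence $\int H_{a,b}(u')^p\,dt\geq(a+b)^p\int(-\mu')^{1-p}\,ds$. Finally the symmetric decreasing rearrangement $u^{\sharp}$ of $u$ on $(\alpha,\beta)$ preserves the distribution and, being symmetric unimodal, satisfies $\int|(u^{\sharp})'|^p\,dt=2^p\int(-\mu')^{1-p}\,ds$; the standard $p$-Poincaré inequality applied to $u^{\sharp}\in W_0^{1,p}(\alpha,\beta)$ then yields $2^p\int(-\mu')^{1-p}\,ds\geq\lambda_{1,p}(\alpha,\beta)\int u^p\,dt$, closing the chain.

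Equality throughout the lower-bound argument forces a single component $(\alpha_1,\beta_1)=(-T,T)$ (so $u$ has constant sign), then $N(s)\equiv 1$ together with the Lagrange ratio $x_1:y_1=a:b$ (so $u$ is unimodal; integration in $s$ places the peak of a positive $u$ at $t_0=(a-b)T/(a+b)$, while the parallel analysis for negative $u$ swaps $a\leftrightarrow b$ and places the trough at $-t_0$), and finally Poincaré equality forces $u^{\sharp}$ to be a scalar multiple of $\varphi_p$. Assembling these conditions identifies $u_p$ and $v_p$ (up to positive scaling) as the only extremizers. The $\mathcal{E}(\Omega)$ conclusion is then immediate: $u_p$ and $v_p$ vanish at $\pm T$ precisely when $t_0\in(-T,T)$, i.e., $b>0$; when $b=0$, any attempted minimizer would have to place its peak at $t_0=T$, incompatible with the Dirichlet condition, so the infimum $(a/2)^p\lambda_{1,p}(-T,T)$ is not attained and $-\Delta_p^{H_{a,0}}\notin\mathcal{E}((-T,T))$. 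The hardest part, I expect, is executing the coarea/Lagrange step rigorously for general $W_0^{1,p}$ functions, which will require smooth approximation together with Sard-type regularity of almost every level set.
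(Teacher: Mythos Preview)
Your proposal is correct but takes a genuinely different route from the paper. The paper splits the argument by cases. For $b>0$ it proceeds via the Euler--Lagrange ODE: a minimizer exists by direct methods, is $C^1$ and one-signed, and Hopf's lemma together with a reflection lemma (turning each monotone half into a $p$-Laplace eigenfunction on a doubled interval) pin down the unique critical point at $t_0=(a-b)T/(a+b)$ and the value $((a+b)/2)^p\lambda_{1,p}(-T,T)$; uniqueness of the extremizer then follows from strict convexity. For $b=0$ the paper instead enlarges the competition to a piecewise Sobolev space $W_{pw}^{1,p,+}(-T,T)$ and performs a three-step reduction (remove interior zeros, remove jumps, flatten decreasing stretches) down to monotone competitors, after which a reflection gives the constant and shows the only extremizers lie outside $W_0^{1,p}$. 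Your coarea/Lagrange/rearrangement argument handles both cases in one stroke, is more variational in spirit, and bypasses ODE regularity, Hopf's lemma, and the reflection lemma entirely; the price is the smooth-approximation/Sard work you flag at the end. Conversely, the paper's $b=0$ treatment actually proves a stronger statement---the sharp inequality and extremizer classification on the larger space $W_{pw}^{1,p,+}(-T,T)$---which is used later when transferring the one-dimensional theory to degenerate two-dimensional anisotropies; your argument as written lives in $W_0^{1,p}$ and would need an additional step to recover that extension.
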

\begin{figure}
	\includegraphics[width=\textwidth]{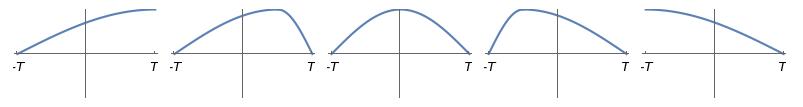}
	\caption{The functions $u_p$ for $p=2$ and the values $b=1-a$ and $a = 0, \frac 14, \frac 12, \frac 34, 1$.}
	\label{fig_functions_u}
\end{figure}

In particular, we have:
\begin{cor} \label{C1}
Let $p > 1$, let $T > 0$ and let $H \in \mathscr{H}$. The level $\lambda_{p}^{H}(-T,T)$ is positive if and only if $H \neq 0$.
\end{cor}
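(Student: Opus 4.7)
The plan is to invoke Theorem \ref{T1.1}(i) directly, after reducing to the case $a>0$ by a symmetry argument. In dimension one we have $\mathscr{H}=\{H_{a,b}:a,b\ge 0\}$, so $H\ne 0$ is equivalent to $a+b>0$. The ``only if'' direction is trivial: if $H\equiv 0$, then ${\cal E}_p^H(u)=0$ for every admissible $u$, hence $\lambda_p^H(-T,T)=0$.

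For the ``if'' direction, suppose $H=H_{a,b}\ne 0$. If $a>0$, Theorem \ref{T1.1}(i) gives
\[
\lambda_p^{a,b}(-T,T)=\left(\frac{a+b}{2}\right)^p\lambda_{1,p}(-T,T),
\]
which is strictly positive because $a+b>0$ and $\lambda_{1,p}(-T,T)>0$ by the classical theory of the one-dimensional $p$-Laplacian on a bounded interval. If instead $a=0$ and $b>0$, I would observe that the change of variables $t\mapsto -t$ is an isometry of $(-T,T)$ onto itself and that it sends $H_{a,b}(u')$ into $H_{b,a}((u\circ(-\operatorname{id}))')$; therefore $\lambda_p^{0,b}(-T,T)=\lambda_p^{b,0}(-T,T)$, reducing this case to the previous one.

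The argument involves no real obstacle, as all the substantive work (the explicit value of $\lambda_p^{a,b}(-T,T)$, together with its positivity via positivity of $\lambda_{1,p}(-T,T)$) has already been done in Theorem \ref{T1.1}. The only point that deserves explicit mention is the symmetry $\lambda_p^{a,b}=\lambda_p^{b,a}$ used to cover the case $a=0$, since Theorem \ref{T1.1} was stated only under the hypothesis $a>0$.
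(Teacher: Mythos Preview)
Your proof is correct and follows essentially the same approach as the paper: Corollary~\ref{C1} is stated there as an immediate consequence of Theorem~\ref{T1.1}(i), with the case $a=0<b$ handled by the symmetry $\lambda_p^{a,b}=\lambda_p^{b,a}$ already noted before the theorem. The only cosmetic difference is that the paper obtains this symmetry via $u\mapsto -u$ rather than the reflection $t\mapsto -t$, but the effect is identical.
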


For related results in dimension $n = 2$, it is convenient to decompose $\mathscr{H}$ into the subsets of positive and degenerate anisotropies to be denoted respectively by
\begin{align*}
	\mathscr{H}_P &:= \left\{H \in \mathscr{H}:\ H(x,y) > 0\ \forall (x,y) \neq (0,0)\right\},\\
	\mathscr{H}_D &:= \left\{H \in \mathscr{H}:\ H(x,y) = 0\ \text{for some}\ (x,y) \neq (0,0)\right\}.
\end{align*}

\begin{teor} \label{T2.1}
Let $p > 1$ and let $H \in \mathscr{H}_P$. The assertions are equivalent:
\begin{itemize}
\item[(a)] $\lambda_{p}^{H}(\Omega)$ is a fundamental frequency for any bounded domain $\Omega \subset \R^2$;
\item[(b)] $H$ is of $C^1$ class in $\R^2 \setminus \{(0,0)\}$.
\end{itemize}
\end{teor}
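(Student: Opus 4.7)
For the direction (b) $\Rightarrow$ (a) I use the direct method. Since $H \in \mathscr{H}_P$ is continuous and strictly positive on the compact circle $\s^1$, there exist $c_1, c_2 > 0$ with $c_1|v| \leq H(v) \leq c_2|v|$, so ${\cal E}_p^H$ is equivalent to the usual $W^{1,p}_0$ seminorm. Convexity of $H$ combined with monotonicity and convexity of $t \mapsto t^p$ on $[0,\infty)$ makes $H^p$ convex, hence ${\cal E}_p^H$ is weakly lower semicontinuous; together with coercivity and the compact embedding $W^{1,p}_0(\Omega) \hookrightarrow L^p(\Omega)$, this yields a minimizer $u_p$ of \eqref{frequency}. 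Under (b) and $p > 1$, $H^p$ extends to a $C^1$ function on $\R^2$ because $\nabla H^p = pH^{p-1}\nabla H$ vanishes at the origin. Hence ${\cal E}_p^H$ is Gateaux differentiable and the Lagrange multiplier rule on the unit sphere of $L^p(\Omega)$ produces the Euler--Lagrange identity, with $\nabla H(\nabla u_p)$ automatically measurable.

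The reverse implication (a) $\Rightarrow$ (b) is proved by contraposition. Suppose $H$ fails to be $C^1$ at some $\xi_0 \in \R^2 \setminus \{(0,0)\}$; by positive $1$-homogeneity the entire open ray $R = \{t\xi_0 : t > 0\}$ consists of non-differentiability points of $H$ and $\partial H(\xi_0)$ is a non-trivial line segment. The goal is to exhibit a bounded domain $\Omega \subset \R^2$ on which every minimizer of \eqref{frequency} has $\nabla u_p$ aligned with $\xi_0$ on a set of positive measure, so that $\nabla H(\nabla u_p)$ admits no single-valued selection there and $u_p$ cannot be a fundamental frequency. After a rotation sending $\xi_0$ to $(0,-1)$, I take the thin slab $\Omega = (-L,L) \times (-T,T)$ with $L \gg T$ as the test domain. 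Introducing the partial infimum $\bar H(w) := \inf_v H(v,w)$, which by $H \in \mathscr{H}_P$ and $1$-homogeneity belongs to $\mathscr{H}$ in one variable and coincides with $H_{a,b}$ for some $a,b > 0$, a transplanted one-dimensional extremizer of Theorem \ref{T1.1} (possibly via a shear) provides a competitor whose Rayleigh quotient tends to $\left(\frac{a+b}{2}\right)^p \lambda_{1,p}(-T,T)$ as $L \to \infty$. A matching lower bound follows from the slice-wise inequality ${\cal E}_p^H(u) \geq \int_{-L}^L \int_{-T}^T \bar H^p(\partial_y u)\, dy\, dx$ combined with Theorem \ref{T1.1}.

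The rigidity step, which is the main obstacle, exploits the equality characterization in Theorem \ref{T1.1} to conclude that on almost every slice the minimizer $u_p$ agrees (up to the shear) with a one-dimensional extremizer, which pins $\nabla u_p$ onto $R \cup (-R)$ on a set of positive measure; since the $1$D extremizer changes sign and both half-rays cannot be smooth when one of them isn't (by a careful one-sided analysis of the subdifferential), $\nabla u_p$ lies on $R$ itself on a further positive measure subset, where $\nabla H$ is multi-valued. No measurable selection of the subdifferential then satisfies the Euler--Lagrange equation, giving the desired contradiction. The analytic difficulties are (i) identifying $\bar H$ explicitly together with its optimal shear direction, (ii) handling possibly distinct shears for the two signs of $\partial_y u_p$ so that the 1D comparison remains sharp, and (iii) converting slice-wise equality in the one-dimensional inequality into pointwise alignment of $\nabla u_p$ with $R$; all of these rely crucially on the sharp Theorem \ref{T1.1} and on the strict convexity of $H^p$ transversal to $R$.
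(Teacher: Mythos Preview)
Your (b) $\Rightarrow$ (a) argument via the direct method is correct and is essentially what the paper takes for granted. The gap is in (a) $\Rightarrow$ (b). On the slab $\Omega_L=(-L,L)\times(-T,T)$ your slice bound gives $\lambda_p^H(\Omega_L)\ge\left(\frac{a+b}{2}\right)^p\lambda_{1,p}(-T,T)$, and the transplanted competitor shows that the two sides agree \emph{in the limit} $L\to\infty$; but for every finite $L$ the inequality is strict when $H\in\mathscr{H}_P$. Indeed, equality would force simultaneously $H(\nabla u)=\bar H(\partial_y u)$ a.e.\ and, by the rigidity in Theorem~\ref{T1.1}, $u(x,\cdot)=c(x)u_p(\cdot)$ for a.e.\ $x$ (with $u_p$ the one--dimensional extremizer). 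The first condition then becomes $H\big(c'(x)u_p(y),\,c(x)u_p'(y)\big)=\bar H\big(c(x)u_p'(y)\big)$. Letting $y\to t_0$, where $u_p'(t_0)=0$ but $u_p(t_0)>0$, the right side tends to $0$ while positivity of $H$ forces the left side to $0$ only if $c'(x)u_p(t_0)=0$, hence $c'\equiv 0$. A nonzero constant $c$ violates $u\in W_0^{1,p}(\Omega_L)$. Thus no minimizer on a rectangle ever saturates the slice inequality, the equality characterization of Theorem~\ref{T1.1} is never triggered, and you have no mechanism pinning $\nabla u_p$ onto the singular ray $R$ on a set of positive measure. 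The difficulties you list as (i)--(iii) are downstream of this; even granting them, the argument does not close for any fixed bounded $\Omega$.

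The paper's route is structurally different: it takes $\Omega=D_H^\circ$, the polar body of $\{H\le1\}$, and applies the anisotropic convex rearrangement inequality of Bianchi--Cianchi--Gronchi. The non--differentiability of $H$ at $(z,w)$ is equivalent, via duality, to a flat segment $S=\partial H(z,w)\subset\partial D_H^\circ$. The equality case of the rearrangement inequality forces almost every superlevel set of the minimizer to be a homothetic copy of $D_H^\circ$, so each such set carries its own scaled copy of $S$; on that face the outward normal, and hence $\nabla u$, is parallel to $(z,w)$, and a coarea computation shows this happens on a set of positive measure. The essential contrast is that on $D_H^\circ$ the symmetrization inequality is saturated \emph{exactly} by the minimizer, whereas your one--dimensional slice bound on a rectangle is never attained for positive $H$.
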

We now concentrate on the degenerate case and provide complete answers for two subsets of $\mathscr{H}_D$.

Let $H \in \mathscr{H}_D \setminus \{0\}$ be such that $\ker(H):= \{(x,y) \in \R^2:\, H(x,y) = 0\}$ is a line or a half-plane. In each case there is a rotation matrix $A$ such that
\begin{equation}\label{Amatrix}
H_{A}(x,y):= (H \circ A)(x,y) = H_{a,b}(y) = a y^+ + b y^-,
\end{equation}
where $a, b > 0$ when $\ker(H)$ is a line, and $a > 0$ and $b = 0$ when $\ker(H)$ is a half-plane (see Proposition \ref{P5}).

The next results describe the two situations separately.
\begin{teor}[The case $b=0$] \label{T3.1}
Let $p > 1$ and let $\Omega$ be any bounded domain in $\R^2$. If $\ker(H)$ is a half-plane then the level $\lambda^H_{p}(\Omega)$ is never a fundamental frequency.
\end{teor}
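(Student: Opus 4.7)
The plan is first to reduce, via the rotation in \eqref{Amatrix}, to the case $H(x,y) = a\,y^+$ with $a > 0$, and then to run a vertical-slicing argument that lifts the $b=0$ non-attainment of Theorem \ref{T1.1} to two dimensions. I introduce
\[
L^\ast := \sup\bigl\{L > 0:\ \exists\,(x_0,y_0)\in\R^2 \text{ with } \{x_0\}\times(y_0,y_0+L)\subset\Omega\bigr\},
\]
which lies in $(0,+\infty)$ because $\Omega$ is a bounded, non-empty open set, and set $\mu := (a/2)^p\,\lambda_{1,p}(-L^\ast/2,L^\ast/2) = \lambda_{p}^{a,0}(-L^\ast/2,L^\ast/2) > 0$ via Theorem \ref{T1.1}(i).

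The argument has two matching halves. For the upper bound $\lambda_{p}^{H}(\Omega)\le\mu$, for each $\varepsilon>0$ openness of $\Omega$ yields a thin rectangle $(x_0-\delta,x_0+\delta)\times(y_0,y_0+L^\ast-\varepsilon)\subset\Omega$ into which I plug the separable competitors $u_n(x,y) := \phi(x)\psi_n(y)$ (extended by zero), where $0\le\phi\in C_c^\infty(x_0-\delta,x_0+\delta)$ is fixed and $(\psi_n)\subset W_0^{1,p}(y_0,y_0+L^\ast-\varepsilon)$ is a minimizing sequence for $\lambda_{p}^{a,0}$ on that interval; the Rayleigh quotient collapses to $a^p\int[(\psi_n')^+]^p\,dy\,/\int\psi_n^p\,dy$ and tends to $\mu$ as $n\to\infty$ and $\varepsilon\to0^+$. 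For the strict lower bound, I take any $u\in W_0^{1,p}(\Omega)\setminus\{0\}$, extend it by zero, and invoke Fubini together with the 1D embedding $W^{1,p}(\R)\hookrightarrow C(\R)$: for a.e.~$x$ the slice $u(x,\cdot)$ is continuous on $\R$ and vanishes at every endpoint of every component $I$ of $\Omega_x$, hence $u(x,\cdot)|_I\in W_0^{1,p}(I)$ with $|I|\le L^\ast$. Because the extremizers $u_p,v_p$ of Theorem \ref{T1.1}(ii) with $b=0$ fail to vanish at one endpoint, the 1D infimum $\lambda_{p}^{a,0}(I)$ is \emph{not attained} in $W_0^{1,p}(I)$, giving the strict inequality
\[
a^p\int_I[(u_y)^+]^p\,dy \;>\; (a/2)^p\,\lambda_{1,p}(I)\int_I|u|^p\,dy \;\ge\; \mu\int_I|u|^p\,dy
\]
on every component $I$ with $u(x,\cdot)|_I\not\equiv0$. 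Summing over components and integrating on the positive-measure set $\{x:\,u(x,\cdot)\not\equiv0\}$ yields ${\cal E}_{p}^{H}(u) > \mu\,\lVert u\rVert_{p}^{p}$. Combined with the upper half, this forces $\lambda_{p}^{H}(\Omega)=\mu$ with the infimum unattained in $W_0^{1,p}(\Omega)$.

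To conclude that $\lambda_{p}^{H}(\Omega)$ is not a fundamental frequency, I argue by contradiction: a hypothetical weak eigenfunction $u_p\in W_0^{1,p}(\Omega)\setminus\{0\}$ at the level $\lambda_{p}^{H}(\Omega)$, tested against $\varphi=u_p$ in the weak Euler--Lagrange identity and combined with the homogeneity relation $\nabla H(\xi)\cdot\xi=H(\xi)$ (valid wherever $H$ is differentiable, the prefactor $H^{p-1}(\nabla u_p)=a^{p-1}[(u_{p,y})^+]^{p-1}$ annihilating the integrand on the ambiguous set $\{u_{p,y}\le0\}$), yields ${\cal E}_{p}^{H}(u_p) = \lambda_{p}^{H}(\Omega)\lVert u_p\rVert_{p}^{p}$ and hence produces a non-trivial minimizer of the Rayleigh quotient, contradicting the strict inequality above. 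The main technical obstacle is the slicing step: ensuring that the 1D strict inequality survives integration is what forces the use of the precise non-attainment part of Theorem \ref{T1.1}, which rules out any non-trivial $W_0^{1,p}$-slice realizing 1D equality when $b=0$, so strictness persists on the positive-measure set of non-trivial slices.
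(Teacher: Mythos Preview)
Your proof is correct and follows a route that is close in spirit to the paper's but organized differently, and in one respect more elementary. The paper first proves a PDE dimension-reduction lemma (Lemma~\ref{L2}): slices $u(x_0,\cdot)$ of a 2D weak eigenfunction are 1D weak eigenfunctions in $W_0^{1,p}(\Omega_{x_0})$. From one nontrivial slice it gets, via the 1D non-attainment of Theorem~\ref{T1}, the strict inequality $\lambda_p^{a,0}(\tilde\Omega_{x_0})<\lambda_p^H(\Omega)$, and then builds a separable competitor on a rectangle to contradict the definition of $\lambda_p^H(\Omega)$. You instead compute $\lambda_p^H(\Omega)=\mu$ directly by matching upper and lower bounds (this is essentially the $b=0$ case of Proposition~\ref{P8}, which the paper proves separately), show the infimum is unattained by slicing the Rayleigh quotient and invoking the same 1D non-attainment, and then close with the clean observation that any weak eigenfunction, tested against itself and using Euler's relation $\nabla H(\xi)\cdot\xi=H(\xi)$, would be a minimizer. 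Your approach buys you two things: you bypass Lemma~\ref{L2} entirely (working only at the level of the functional, not the PDE), and you obtain the explicit value $\lambda_p^H(\Omega)=\mu$ along the way. The paper's route, in turn, makes the PDE structure transparent and reuses Lemma~\ref{L2} in the $b>0$ case (Theorem~\ref{T4.1}). One small remark: in your final step the parenthetical about the prefactor annihilating the integrand on $\{u_{p,y}\le 0\}$ is not needed, since the very definition of fundamental frequency already assumes $\nabla H(\nabla u_p)$ is well-defined a.e., after which Euler's identity applies a.e.\ directly.
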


\begin{teor}[The case $b>0$] \label{T4.1}
Let $p > 1$, $\Omega$ be any $C^{0,1}$ bounded domain in $\R^2$, assume $\ker(H)$ is a line and let $A$ be as in \eqref{Amatrix}. Denote $\Omega_A:= A^{T}(\Omega)$. The assertions are equivalent:
\begin{itemize}
\item[(a)] $\lambda^H_{p}(\Omega)$ is a fundamental frequency;

\item[(b)] There are bounded open intervals $I^\prime \subset I \subset \mathbb{R}$, a $C^{0,1}$ sub-domain $\Omega^\prime \subset \Omega$ and a number $L > 0$ such that:
\begin{itemize}

\item[(i)] $\Omega_A \subset I \times \mathbb{R}$,

\item[(ii)] Each connected component of $\{ y \in \mathbb{R}:\, (x,y) \in \Omega_A) \}$ has length at most $L$ for every $x \in I$,

\item[(iii)] The set $\{ y \in \mathbb{R}:\, (x,y) \in \Omega_A^\prime \}$ is an interval of length $L$ for every $x \in I^\prime$.
\end{itemize}
\end{itemize}
In that case, $L$ is the number so that $\lambda^H_{p}(\Omega) = \lambda^{a,b}_{p}(0,L)$.
\end{teor}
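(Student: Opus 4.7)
The plan is to exploit the reduction to the one-dimensional theory of Theorem \ref{T1.1} on vertical slices. Via the rotation $A$, one has $\lambda_p^H(\Omega) = \lambda_p^{H_A}(\Omega_A)$ with $H_A(x,y) = H_{a,b}(y)$, so the functional depends only on $\partial_y u$ and decouples along vertical slices: for $u \in W^{1,p}_0(\Omega_A)$ and a.e.\ $x$, the trace $u(x,\cdot)$ lies in $W^{1,p}_0$ of the slice $\{y : (x,y) \in \Omega_A\}$, and on each of its components $J_k(x)$ of length $\ell_k(x)$, Theorem \ref{T1.1} gives
\begin{equation*}
\int_{J_k(x)} H_{a,b}^p(\partial_y u)\, dy \;\geq\; \lambda_p^{a,b}(0,\ell_k(x))\int_{J_k(x)} |u(x,y)|^p\, dy,
\end{equation*}
with equality iff $u(x,\cdot)|_{J_k(x)}$ is one of the shifted 1D extremizers $u_p,v_p$ of Theorem \ref{T1.1}. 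The scaling $\lambda_p^{a,b}(0,\ell) = (L/\ell)^p\lambda_p^{a,b}(0,L)$ drives both implications.

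For (b) $\Rightarrow$ (a), the lower bound $\lambda_p^H(\Omega) \geq \lambda_p^{a,b}(0,L)$ is immediate from (ii) and the slice inequality, since $\ell_k(x) \leq L$. For the matching upper bound, (iii) combined with (ii) forces each slice of $\Omega_A^\prime$ to coincide with an entire length-$L$ component of the corresponding slice of $\Omega_A$, so $\Omega_A^\prime = \{(x,y) : x \in I^\prime,\,\alpha(x) < y < \alpha(x)+L\}$ with $\alpha$ Lipschitz (from the $C^{0,1}$ regularity of $\Omega^\prime$). Take $u(x,y) = \chi(x)\,w(y-\alpha(x))$ with $\chi \in C_c^\infty(I^\prime)$, $\chi \geq 0$, and $w$ a 1D extremizer of $\lambda_p^{a,b}(0,L)$ from Theorem \ref{T1.1}; extension by zero to $\Omega_A$ is legitimate because $w(0)=w(L)=0$ and $\alpha$ is Lipschitz. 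Positive homogeneity gives $H_{a,b}^p(\partial_y u) = \chi^p\,H_{a,b}^p(w^\prime)$, and Fubini returns Rayleigh quotient exactly $\lambda_p^{a,b}(0,L)$, so $u$ is a minimizer. To see $u$ is a fundamental frequency, note that $w^\prime$ vanishes only at a single point (from the explicit form of $u_p,v_p$), so $\nabla H_A(\nabla u)$ is defined a.e.; testing against $\varphi \in W^{1,p}_0(\Omega_A)$, integrating in $y$ first, and using the 1D Euler--Lagrange equation satisfied by $w$ closes the argument.

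For (a) $\Rightarrow$ (b), define $L$ by $\lambda_p^{a,b}(0,L) = \lambda_p^H(\Omega_A)$, and let $I$ be any bounded open interval containing the $x$-projection of $\Omega_A$. Property (ii) follows by contradiction: if some slice at $x_0$ had a component of length $\ell > L$, the $C^{0,1}$ regularity of $\partial\Omega_A$ would produce Lipschitz graphs $y_1(x),y_2(x)$ with $y_2-y_1 > L$ on a neighborhood $J \ni x_0$, and the product construction above with an intermediate length $\ell^* \in (L,\min_J(y_2-y_1))$ would give a trial function of Rayleigh quotient $\lambda_p^{a,b}(0,\ell^*) < \lambda_p^{a,b}(0,L) = \lambda_p^H(\Omega_A)$, a contradiction. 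For (iii), let $u$ be a fundamental frequency. Integrating the slice inequality over $x$ and comparing to the Rayleigh identity for $u$ forces, by the equality case of Theorem \ref{T1.1}, that $u(x,\cdot) \equiv 0$ on every slice component of length $< L$ and equals a 1D extremizer on every length-$L$ component where it is nonzero. Taking $\Omega_A^\prime$ to be a connected component of the interior of the support of $u$, its slices are single length-$L$ intervals $\{x\}\times(\alpha(x),\alpha(x)+L)$; since any length-$L$ component saturates (ii), it must be pinched between opposite arcs of $\partial\Omega_A$, so $\alpha$ and $\alpha+L$ are Lipschitz and $\Omega_A^\prime$ is $C^{0,1}$. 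Setting $I^\prime$ as the $x$-projection of $\Omega_A^\prime$ and $\Omega^\prime = A(\Omega_A^\prime)$ completes the construction.

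The main obstacle is extracting the $C^{0,1}$ sub-domain $\Omega^\prime$ in (a) $\Rightarrow$ (b): the combinatorial information that $u(x,\cdot)$ concentrates on a length-$L$ component of each slice must be promoted to Lipschitz continuity of the strip's upper and lower boundary, which is carried out by identifying those boundaries with arcs of $\partial\Omega_A$ (forced by the saturation of (ii)) and using the $C^{0,1}$ regularity of $\Omega$. A secondary technicality is sign-tracking, since the two 1D extremizer shapes $u_p$ and $v_p$ may combine across different length-$L$ components within a single slice of $\Omega_A$; this is handled by restricting to a connected component of the support of $u$, on which $u(x,\cdot)$ is a single 1D extremizer per slice.
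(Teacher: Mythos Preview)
Your overall skeleton matches the paper's --- reduction to vertical slices, the product trial function $\chi(x)\,w(y-\alpha(x))$ for $(b)\Rightarrow(a)$, and a rectangle contradiction for (ii) --- but you take a genuinely different route for $(a)\Rightarrow$(iii). The paper does \emph{not} argue through the equality case of the slice Poincar\'e inequality. Instead it proves a dimension--reduction lemma for the PDE (Lemma~\ref{L2}): if $u$ is a weak solution of $-\Delta_p^{H_A} u = \lambda |u|^{p-2}u$ in $\Omega_A$, then for a.e.\ $x$ the restriction $u_x=u(x,\cdot)$ is a one--dimensional weak solution on $\Omega_x$. From this it reads off that on each connected component $\tilde\Omega_x$ either $u_x\equiv 0$ or $\lambda_p^{a,b}(\tilde\Omega_x)=\lambda_p^H(\Omega)$, i.e.\ $|\tilde\Omega_x|=L$. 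To upgrade the a.e.\ information in $x$ to an open interval $I'$, the paper applies Morrey's theorem to the \emph{horizontal} traces $u(\cdot,y)\in W^{1,p}_0(\Omega^y)\hookrightarrow C(\overline{\Omega^y})$: picking $(x_0,y_0)$ with $u(x_0,y_0)>0$, continuity in $x$ yields $u(x,y_0)>0$ on $I'=(x_0-\delta,x_0+\delta)$, hence the length--$L$ component through $(x,y_0)$ persists for all $x\in I'$. Your variational route is cleaner in that it avoids Lemma~\ref{L2} entirely; the paper's route buys a direct mechanism (horizontal continuity) for producing the open interval $I'$.

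This is also where your argument has a real gap. From the equality case you correctly conclude that for a.e.\ $x$, the function $u(x,\cdot)$ is either zero or a one--dimensional extremizer on some length--$L$ component. But you then pass to ``a connected component of the interior of the support of $u$'' and assert that its vertical slices are single length--$L$ intervals. Two things are missing. First, for $p\le 2$ there is no Sobolev embedding into $C(\Omega_A)$, so you have not shown that $\operatorname{int}(\operatorname{supp} u)$ is nonempty: the slice conclusion holds only for a.e.\ $x$, and a measurable set of ``good'' $x$ need not contain an interval. Second, even granting nonemptiness, a connected open planar set can have disconnected vertical slices, and you give no argument ruling this out here. Both issues are resolved by exactly the Morrey step the paper uses: once $u(\cdot,y_0)$ is continuous, positivity propagates in $x$ and singles out \emph{one} length--$L$ component $\tilde\Omega_x$ varying with $x\in I'$, after which the $C^{0,1}$ regularity of $\partial\Omega$ gives the Lipschitz strip. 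If you insert this horizontal--continuity argument, your variational proof goes through and is a legitimate alternative to the paper's PDE reduction.
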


Let us now turn our attention to the problems \ref{itemA} and \ref{itemB}. By the definition of ${\cal E}(\Omega)$ we can denote
\[
\lambda_1(-\Delta_p^H) := \lambda^H_{p}(\Omega).
\]
Among different possible ways of ``measuring" operators in ${\cal E}(\Omega)$, we choose to work with
\[
{\cal M}(-\Delta_p^H) = \Vert H \Vert^p,
\]
where
\[
\Vert H \Vert:= \max_{\vert x \vert = 1} H(x).
\]
For this choice, the inequalities in \eqref{SSE} take the form
\begin{equation} \label{SSE1}
\Lambda^{\min}(\Omega)\, \Vert H \Vert^p \leq \lambda_1(-\Delta_p^H) \leq \Lambda^{\max}(\Omega)\, \Vert H \Vert^p.
\end{equation}
Furthermore, as can be easily checked, ${\cal M}$ is positively homogeneous, that is
\[
	{\cal M}(- \lambda \Delta_p^H) = {\cal M}(- \Delta_p^{\lambda^{1/p}H}) = \lambda\, {\cal M}(- \Delta_p^H)
\]
for every $\lambda > 0$. Consequently, the optimal constants in \eqref{SSE1} are given by
\begin{align*}
	\Lambda^{\max}(\Omega) &= \sup\left\{\lambda_1(-\Delta_p^H):\, -\Delta_p^H \in {\cal E}(\Omega)\ \ {\rm and}\ \ \Vert H \Vert = 1 \right\}, \\
	\Lambda^{\min}(\Omega) &= \inf\left\{\lambda_1(-\Delta_p^H):\, -\Delta_p^H \in {\cal E}(\Omega)\ \ {\rm and}\ \ \Vert H \Vert = 1 \right\}.
\end{align*}

In dimension $n = 1$, Theorem \ref{T1.1} provides the complete solution for the questions \ref{itemA} and \ref{itemB}. Since $\Vert H_{a,b} \Vert= \max\{a,b\}$, we have:
\begin{cor} \label{C2}
For any $p > 1$ and $T > 0$, we have
\begin{equation} \label{SSE2}
\Lambda^{\min}(-T,T)\, \max\{a,b\}^p \leq \lambda_1(-\Delta_p^{H_{a,b}}) \leq \Lambda^{\max}(-T,T)\, \max\{a,b\}^p
\end{equation}
for every $-\Delta_p^{H_{a,b}} \in {\cal E}(\Omega)$, where
\begin{align*}
	\lambda_{1,p}^{\max}(-T,T) &= \lambda_{1,p}(-T,T),\\
	\lambda_{1,p}^{\min}(-T,T) &= \displaystyle \frac{1}{2^p} \lambda_{1,p}(-T,T).
\end{align*}
Moreover, all extremal operators for the right inequality in \eqref{SSE2} are given by the non-zero multiples of $-\Delta_p$, and for the left one they are given by $- \Delta_p^{H_{a,0}}$ and $- \Delta_p^{H_{0,b}}$ for all $a, b > 0$, which do not belong to ${\cal E}(\Omega)$.
\end{cor}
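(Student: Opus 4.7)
\medskip

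The plan is to reduce everything to the explicit formula of Theorem \ref{T1.1}(i). By that theorem, every operator in $\mathcal{E}(-T,T)$ has the form $-\Delta_p^{H_{a,b}}$ with $a,b>0$, and
\[
\lambda_1(-\Delta_p^{H_{a,b}}) \;=\; \left(\frac{a+b}{2}\right)^{p}\lambda_{1,p}(-T,T).
\]
Since $\|H_{a,b}\| = \max_{|t|=1} H_{a,b}(t) = \max\{a,b\}$, the ratio that controls \eqref{SSE1} is
\[
\frac{\lambda_1(-\Delta_p^{H_{a,b}})}{\|H_{a,b}\|^{p}} \;=\; \left(\frac{a+b}{2\max\{a,b\}}\right)^{p}\lambda_{1,p}(-T,T).
\]
So the corollary is an exercise in maximizing and minimizing the elementary function $(a,b)\mapsto\frac{a+b}{2\max\{a,b\}}$ on $\{(a,b):\,a,b\geq 0\}\setminus\{(0,0)\}$.

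Setting $s=\min\{a,b\}/\max\{a,b\}\in[0,1]$, the function equals $\tfrac{1+s}{2}$, which ranges over $[\tfrac12,1]$. Its maximum $1$ is attained precisely when $s=1$, i.e. $a=b$; its infimum $\tfrac12$ is attained precisely when $s=0$, i.e. when $\min\{a,b\}=0$. This immediately gives the two optimal constants
\[
\Lambda^{\max}(-T,T)=\lambda_{1,p}(-T,T),\qquad \Lambda^{\min}(-T,T)=\frac{1}{2^{p}}\lambda_{1,p}(-T,T).
\]

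For the identification of extremizers I would argue as follows. For the upper inequality, equality forces $a=b$; then $H_{a,a}(t)=a|t|$, and by positive $p$-homogeneity of the operator in $H$, $-\Delta_p^{H_{a,a}} = a^{p}(-\Delta_p)$, so the extremal operators are exactly the nonzero positive multiples of $-\Delta_p$, all of which lie in $\mathcal{E}(-T,T)$ so the supremum is attained. For the lower inequality, equality forces $\min\{a,b\}=0$, giving operators of the form $-\Delta_p^{H_{a,0}}$ or $-\Delta_p^{H_{0,b}}$ with $a,b>0$; by Theorem \ref{T1.1} these operators do not belong to $\mathcal{E}(-T,T)$, so the infimum is not attained inside the admissible class. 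That the infimum still equals $\tfrac{1}{2^{p}}\lambda_{1,p}(-T,T)$ is obtained by approximation: taking $a=1$ and $b=\varepsilon>0$ yields $-\Delta_p^{H_{1,\varepsilon}}\in\mathcal{E}(-T,T)$ with $\lambda_1/\|H\|^{p}=\bigl(\tfrac{1+\varepsilon}{2}\bigr)^{p}\lambda_{1,p}(-T,T)\to \tfrac{1}{2^{p}}\lambda_{1,p}(-T,T)$ as $\varepsilon\to 0^{+}$.

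There is no real obstacle: once Theorem \ref{T1.1}(i) is in hand, the corollary is just the elementary optimization above together with the bookkeeping about which $H_{a,b}$ lie in $\mathcal{E}(-T,T)$. The only point requiring care is noting that the infimum is not a minimum on $\mathcal{E}(-T,T)$, so one must list the (non-admissible) extremizers separately and supply the approximation argument showing the infimum is nevertheless $\tfrac{1}{2^{p}}\lambda_{1,p}(-T,T)$.
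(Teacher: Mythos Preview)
Your proof is correct and follows essentially the same approach as the paper, which presents the corollary as an immediate consequence of Theorem~\ref{T1.1} together with the observation $\|H_{a,b}\|=\max\{a,b\}$; you have simply spelled out the elementary optimization in $s=\min\{a,b\}/\max\{a,b\}$ and the approximation argument for the infimum that the paper leaves implicit.
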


We now concentrate on related results in dimension $n = 2$. The next theorem states that the right sharp inequality in \eqref{SSE1} is rigid under some regularity assumptions on the boundary of $\Omega$, and gives explicitly the constant $\Lambda^{\max}(\Omega)$.
\begin{teor} \label{T5.1}
Let $p > 1$ and let $\Omega$ be any bounded domain in $\R^2$. The upper optimal constant $\Lambda^{\max}(\Omega)$ is given by $\lambda_{1,p}(\Omega)$. Moreover, all non-zero multiples of $-\Delta_p$ are extremal operators for the second inequality in \eqref{SSE1} and only them provided that $\partial \Omega$ satisfies the Wiener condition (see the definition for instance in \cite{KM, LM, Ma}).
\end{teor}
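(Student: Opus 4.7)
The plan is to first identify the optimal constant $\Lambda^{\max}(\Omega)$ and then tackle the rigidity statement under the Wiener condition.

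\emph{Upper bound and achievement.} For any $H \in \mathscr{H}$ with $\Vert H \Vert = 1$, positive $1$-homogeneity forces $H(\xi) \leq \vert \xi \vert$ pointwise on $\R^2$. Therefore for every $u \in W_0^{1,p}(\Omega)$ with $\Vert u \Vert_p = 1$,
\[
\int_\Omega H^p(\nabla u)\, dx \leq \int_\Omega \vert \nabla u \vert^p\, dx,
\]
so $\lambda_p^H(\Omega) \leq \lambda_{1,p}(\Omega)$ and hence $\Lambda^{\max}(\Omega) \leq \lambda_{1,p}(\Omega)$. Equality is attained by $H = \vert \cdot \vert$, which yields $-\Delta_p \in \mathcal{E}(\Omega)$ with $\Vert H \Vert = 1$ and $\lambda_1(-\Delta_p) = \lambda_{1,p}(\Omega)$; by the homogeneity of $\mathcal{M}$, all non-zero multiples of $-\Delta_p$ are extremal.

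\emph{Reduction to the eigenfunction of $-\Delta_p$.} Assume now that $-\Delta_p^H$ is extremal with $\Vert H \Vert = 1$. The definition of $\mathcal{E}(\Omega)$ provides an extremizer $u \in W_0^{1,p}(\Omega)$ with $\Vert u \Vert_p = 1$ and $\int_\Omega H^p(\nabla u)\, dx = \lambda_{1,p}(\Omega)$. Combined with $H(\nabla u) \leq \vert \nabla u \vert$ and the variational characterization of $\lambda_{1,p}(\Omega)$, this forces $\int_\Omega \vert \nabla u \vert^p\, dx = \lambda_{1,p}(\Omega)$, so $u$ is itself a minimizer for $\lambda_{1,p}(\Omega)$ and $H(\nabla u) = \vert \nabla u \vert$ a.e. By the classical uniqueness (up to sign) of the first $p$-Dirichlet eigenfunction on a bounded connected domain, $u = \pm \varphi_p$. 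Interior $C^{1,\alpha}$ regularity of $\varphi_p$ then upgrades the identity to $H(\nabla \varphi_p(x)) = \vert \nabla \varphi_p(x) \vert$ for every $x \in \Omega$.

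\emph{Surjectivity of directions via Wiener.} It remains to show that $\{\nabla \varphi_p(x) : x \in \Omega\}$ hits every ray from the origin in $\R^2$. Here the Wiener condition enters: it guarantees that $\varphi_p$ extends continuously to $\overline{\Omega}$ with $\varphi_p \equiv 0$ on $\partial \Omega$. For a fixed $v \in S^1$, consider the translated function $\Phi_v(x) := \varphi_p(x) - \alpha v \cdot x$ with $0 < \alpha < M/(2R)$, where $M := \max_{\overline{\Omega}} \varphi_p$ and $R := \max_{\overline{\Omega}} \vert x \vert$. A direct estimate yields $\sup_{\partial \Omega} \Phi_v \leq \alpha R < M - \alpha R \leq \sup_\Omega \Phi_v$, so $\Phi_v$ attains its maximum at an interior point $x^*$; by interior $C^1$ regularity, $\nabla \varphi_p(x^*) = \alpha v$, and thus $H(v) = 1 = \vert v \vert$. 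Since $v \in S^1$ was arbitrary, $H \equiv \vert \cdot \vert$ by $1$-homogeneity.

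\emph{Main obstacle.} The crux is this last step: without a boundary-regularity hypothesis like Wiener's, the translated $\Phi_v$ need not attain its supremum inside $\Omega$, and there is no obvious reason for $\nabla \varphi_p$ to realize every direction — which is exactly why the rigidity statement is conditional on the Wiener condition. Apart from this, the argument relies only on classical tools: the pointwise domination $H \leq \vert \cdot \vert$, uniqueness of the first $p$-eigenfunction, and interior $C^{1,\alpha}$ regularity of $p$-harmonic type functions.
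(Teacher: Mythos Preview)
Your overall strategy matches the paper's: both hinge on the fact that, under the Wiener condition, the first $p$-eigenfunction $\varphi_p$ extends continuously to $\overline{\Omega}$, so a linearly perturbed $\varphi_p$ attains its extremum in the interior and $\nabla\varphi_p(\Omega)$ covers a small ball about the origin. The paper argues the contrapositive (if $H \neq |\cdot|$ then testing $\lambda_p^H(\Omega)$ with $\varphi_p$ yields a strict inequality), while you argue directly; apart from this and minor cosmetic differences in the perturbation, the two proofs coincide.

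There is, however, a genuine gap in your ``Reduction to the eigenfunction of $-\Delta_p$'' paragraph. From $\int_\Omega H^p(\nabla u) = \lambda_{1,p}(\Omega)$ and $H(\nabla u) \leq |\nabla u|$ you obtain $\int_\Omega |\nabla u|^p \geq \lambda_{1,p}(\Omega)$; the variational characterization of $\lambda_{1,p}(\Omega)$ gives the \emph{same} inequality, not the reverse one, so nothing forces $\int_\Omega |\nabla u|^p = \lambda_{1,p}(\Omega)$ and you cannot conclude $u = \pm\varphi_p$. (An $H$-eigenfunction could in principle have $|\nabla u|$ strictly larger than $H(\nabla u)$ on a set of positive measure without violating any of the hypotheses you have at that point.) The fix is to bypass the $H$-eigenfunction entirely and use $\varphi_p$ as a test function for $\lambda_p^H(\Omega)$: then
\[
\lambda_{1,p}(\Omega) = \lambda_p^H(\Omega) \leq \int_\Omega H^p(\nabla\varphi_p)\,dx \leq \int_\Omega |\nabla\varphi_p|^p\,dx = \lambda_{1,p}(\Omega)
\]
forces $H(\nabla\varphi_p) = |\nabla\varphi_p|$ a.e., and your surjectivity argument in the third paragraph then applies verbatim. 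This is exactly how the paper proceeds.
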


Before focusing on the first sharp inequality in \eqref{SSE1}, we recall the notion of optimal anisotropic design for bounded domains, introduced in \cite{HM2}. Consider the width function $L: [0,\pi] \rightarrow \R$ defined by $L(\theta) = L_\theta$, where
\[
	L_\theta:= \sup_{(x,y) \in \R^{2}} \sup\left\{|J| \colon J \subset \left\{t(\cos{\theta},\sin{\theta}) + (x,y) \colon t \in \R\right\}\cap\Omega, \ J \ \text{is connected}\right\}.
\]
A bounded domain $\Omega$ is said to have an optimal anisotropic design if $L: [0,\pi] \rightarrow \R$ has a global maximum.

The concept of optimal design is motivated by the following statement:
\begin{teor} \label{T6.1}
Let $p > 1$ and let $\Omega$ be any bounded domain in $\R^2$. The lower optimal constant $\Lambda^{\min}(\Omega)$ is always positive and given by
\[
\Lambda^{\min}(\Omega) = \inf_{\theta \in [0,2 \pi]} \lambda_{1,p}(0, 2L_\theta).
\]
Moreover, the infimum is attained if and only if $\Omega$ has optimal anisotropic design. In this case, if $\theta_0$ is a global maximum point of the function $L$ then all non-zero multiples of $-\Delta_p^{H_0}$, with
\[
H_0(x,y) = (x\cos{\theta_0} + y\sin{\theta_0})^+,
\]
are extremal operators for the first inequality in \eqref{SSE1}, which do not belong to ${\cal E}(\Omega)$.
\end{teor}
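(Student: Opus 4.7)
The plan is to compare every admissible anisotropy with the family of directional degenerate seminorms $H_\theta(x,y) := (x\cos\theta + y\sin\theta)^+$, for which $\lambda_p^{H_\theta}(\Omega)$ is computable by one-dimensional slicing using Theorem~\ref{T1.1}. Although Theorem~\ref{T3.1} places $-\Delta_p^{H_\theta}$ outside $\mathcal{E}(\Omega)$, these seminorms will emerge as the extremal anisotropies, accessed as limits from inside $\mathcal{E}(\Omega)$.

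For the lower bound I would invoke the support-function representation $H(x) = \max_{v \in K}\langle v,x\rangle$, where $K := \{v \in \R^2 : \langle v,\cdot\rangle \leq H\}$ is convex and compact, contains the origin (since $H \geq 0$), and satisfies $\|H\| = \max_{v \in K}|v|$. If $\|H\| = 1$ and $H(e_\theta) = 1$, Cauchy--Schwarz forces the maximiser in $K$ to be $e_\theta$ itself, so $e_\theta \in K$, which together with $H \geq 0$ yields the pointwise inequality $H \geq H_\theta$ and hence $\lambda_p^H(\Omega) \geq \lambda_p^{H_\theta}(\Omega)$. Rotating so $e_\theta = (1,0)$ and applying Fubini, each slice $\{x : (x,y) \in \Omega\}$ is a countable union of open intervals of length $\leq L_\theta$, on each of which Theorem~\ref{T1.1} with $a=1, b=0$ gives $\int ((u_x)^+)^p dx \geq (1/2)^p \lambda_{1,p}(0,\ell) \int |u|^p dx \geq \lambda_{1,p}(0, 2L_\theta) \int |u|^p dx$, using the scaling $(1/2)^p \lambda_{1,p}(0, L_\theta) = \lambda_{1,p}(0, 2L_\theta)$ and the monotonicity of $\lambda_{1,p}(0,\cdot)$. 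Integrating in $y$ yields $\lambda_p^{H_\theta}(\Omega) \geq \lambda_{1,p}(0, 2L_\theta)$, so $\Lambda^{\min}(\Omega) \geq \inf_\theta \lambda_{1,p}(0, 2L_\theta)$, the latter positive because $L_\theta \leq \mathrm{diam}(\Omega) < \infty$.

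For the matching upper bound I would exhibit a chord of $\Omega$ in direction $e_\theta$ of length $\ell$ arbitrarily close to $L_\theta$ and, by openness, an open tube $(-\ell/2, \ell/2) \times (-\rho,\rho) \subset \Omega$ after rotation. The product test function $u(x,y) = \varphi_\varepsilon(x)\chi(y)$, with $\varphi_\varepsilon$ the 1D extremizer of $\lambda_p^{1,\varepsilon}(-\ell/2,\ell/2)$ from Theorem~\ref{T1.1} and $\chi \in C_c^\infty(-\rho,\rho)$ nonnegative, belongs to $W^{1,p}_0(\Omega)$ and has Rayleigh quotient $\int H_\theta^p(\nabla u)/\int |u|^p = \int((\varphi_\varepsilon')^+)^p / \int |\varphi_\varepsilon|^p \leq ((1+\varepsilon)/2)^p \lambda_{1,p}(0,\ell)$, where I used $t^+ t^- = 0$ to split $H_{1,\varepsilon}^p = (t^+)^p + \varepsilon^p (t^-)^p$ and dropped the negative contribution. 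Letting $\varepsilon \to 0$ and $\ell \to L_\theta$ gives $\lambda_p^{H_\theta}(\Omega) \leq \lambda_{1,p}(0, 2L_\theta)$, hence equality. To transfer this bound to $\Lambda^{\min}(\Omega)$, I approximate $H_\theta$ by a family $\tilde H_{\theta,\sigma} \in \mathscr{H}_P$ of class $C^1$ on $\R^2 \setminus \{(0,0)\}$ with $\|\tilde H_{\theta,\sigma}\|=1$ (a smoothed, renormalised version of $H_\theta + \sigma|\cdot|$), admissible by Theorem~\ref{T2.1}; uniform convergence on the unit sphere propagates to the Rayleigh quotient and gives $\lambda_p^{\tilde H_{\theta,\sigma}}(\Omega) \to \lambda_p^{H_\theta}(\Omega)$, so $\Lambda^{\min}(\Omega) \leq \lambda_{1,p}(0, 2L_\theta)$ for every $\theta$.

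Finally, strict monotonicity of $\lambda_{1,p}(0,\cdot)$ gives $\inf_\theta \lambda_{1,p}(0, 2L_\theta) = \lambda_{1,p}(0, 2\sup_\theta L_\theta)$, attained at $\theta_0$ exactly when $L$ attains its supremum there, i.e.\ when $\Omega$ has optimal anisotropic design. In that case $H_0 := H_{\theta_0}$ satisfies $\lambda_p^{H_0}(\Omega) = \Lambda^{\min}(\Omega)$ by the previous steps, making all positive multiples of $-\Delta_p^{H_0}$ extremal; Theorem~\ref{T3.1} ensures they lie outside $\mathcal{E}(\Omega)$. The main technical obstacle I anticipate is the last approximation step, because the limiting anisotropy $H_\theta$ is degenerate and outside $\mathcal{E}(\Omega)$: one must construct a concrete regularising family in $\mathscr{H}_P \cap C^1(\R^2 \setminus \{(0,0)\})$ and justify continuity of the first eigenvalue along it (the $\limsup$ by plugging a near-extremizer of $H_\theta$ as a competitor; the $\liminf$ by exploiting the monotonicity $\tilde H_{\theta,\sigma} \geq H_\theta$ in a suitable scaled sense).
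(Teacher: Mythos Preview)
Your proposal is correct and follows essentially the same route as the paper: the pointwise bound $H \geq H_\theta$ for some $\theta$ is exactly Proposition~\ref{P6}/Corollary~\ref{propbound}, the computation $\lambda_p^{H_\theta}(\Omega) = \lambda_{1,p}(0,2L_\theta)$ via one-dimensional slicing and rectangular test functions is Proposition~\ref{P8}, and the approximation from inside $\mathcal{E}(\Omega)$ is Lemma~\ref{lemacontinuity}. The technical obstacle you flag at the end is resolved in the paper by the explicit choice $H_\varepsilon(x,y) = \sqrt{\varepsilon(x^2+y^2)+(y^+)^2}$ (after rotating so that $e_\theta=(0,1)$), which lies in $\mathscr{H}_P$, is $C^1$ on $\R^2\setminus\{(0,0)\}$, and satisfies $H_\varepsilon \geq H_\theta$ pointwise---so the $\liminf$ direction is immediate and no smoothing or rescaling subtlety arises.
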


We conclude the introduction with the outline of paper organized by section.
\begin{description}
\item[Section~\ref{sec2}:] We establish a family of new asymmetric Poincaré inequalities in dimension 1 with computation of optimal constants and characterization of all corresponding extremizers.
\item[Section~\ref{sec3}:] It is dedicated to the characterization of fundamental frequencies in dimension $2$ for the positive asymmetric anisotropies.
\item[Section~\ref{sec4}:] It is devoted to the characterization of fundamental frequencies in dimension $2$ for some degenerate anisotropies.
\item[Section~\ref{sec5}:] As an application of Section 3, we present an extended version of the well-known isoperimetric maximization conjecture to fundamental frequencies.
\item[Section~\ref{sec6}:] It is entirely addressed to the study of the questions \ref{itemA} and \ref{itemB} in dimension $2$.
\end{description}

\section{Asymmetric Poincaré inequalities for $n = 1$}\label{sec2}

In this section we carry out the complete study of sharp asymmetric Poincaré inequalities in the line. In precise terms, for each $p > 1$, $T > 0$, $a > 0$ and $b \geq 0$, we recall that
\[
\lambda^{a,b}_{p}(-T,T) := \inf_{\substack{u \in W_{0}^{1,p}(-T,T) \\ \lVert u \rVert_{p} = 1}}\ \int_{-T}^T \left(H_{a,b}(u^\prime(t))\right)^p\, dt,
\]
where $H_{a,b}(t) = a t^+ + b t^-$.

The purpose here is twofold: to compute the value of $\lambda^{a,b}_{p}(-T,T)$ and to characterize all corresponding extremizers. Since the nature of the inequalities relies on the value of $b$, we organize this section into two distinct situations: the degenerate case ($b = 0$) and the non-degenerate case ($b > 0$).

\subsection{The case $b = 0$}

It suffices to assume $a = 1$. Observe that $\lambda_{p}^{0,1}(-T,T) = \lambda_{p}^{1,0}(-T,T)$ and also $u$ is an extremizer for $\lambda_{p}^{1,0}(-T,T)$ if and only if $-u$ is an extremizer for $\lambda_{p}^{0,1}(-T,T)$. In particular, our treatment also covers the case $a = 0$.

Set
\[
\lambda^+_{p}(-T,T) = \lambda_{p}^{1,0}(-T,T).
\]
Next we compute the value of $\lambda^+_{p}(-T,T)$ and characterize all corresponding extremizers in a larger space of functions. As a consequence, we will derive the asymmetric Poincaré inequality which is stronger than the classical one,
\begin{equation} \label{SPI}
\int_{-T}^{T} \left( u'(t)^+ \right)^p dt \geq \lambda^+_{p}(-T,T) \int_{-T}^{T} \vert u(t) \vert^p dt
\end{equation}
for every $u \in W_{0}^{1,p}(-T,T)$.

We begin by introducing the piecewise Sobolev space $W_{pw}^{1,p,+}(-T,T)$ as the set of all functions $u: (-T, T) \rightarrow \R$ such that for some finite partition
\[
{\cal P} = \{-T =: T_0 < T_1 < \cdots < T_{m-1} < T_m:= T\},
\]
we have:
\begin{enumerate}[label=(\Alph*)]
\item\label{item1} $u \in W^{1,p}(T_{i-1}, T_i)$ for every $i=1,\ldots,m$,
\item\label{item2} $u(-T) \leq 0$ and $u(T) \geq 0$,
\item\label{item3} $u(T_i^-) > u(T_i^+)$ for every $i=1,\ldots,m-1$ if $m > 1$, where
\[
u(T_i^\pm) := \lim_{t \rightarrow T_i^\pm} u(t).
\]
\end{enumerate}
Clearly, all functions in $W^{1,p}(-T,T)$ that satisfy (B) belong to $W_{pw}^{1,p,+}(-T,T)$.

We begin by ensuring that the sharp inequality \eqref{SPI} can be extended to the larger set $W_{pw}^{1,p,+}(-T,T)$.
\begin{propo} \label{P1}
For any $p > 1$ and $T > 0$, we have
\[
\lambda^+_{p}(-T,T) = \inf_{\substack{u \in W_{pw}^{1,p,+}(-T,T), \\ \lVert u \rVert_{p} = 1}}\ \int_{-T}^T \left(u^\prime(t)^+\right)^p\, dt.
\]
\end{propo}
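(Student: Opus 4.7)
The plan is to prove the claimed equality in two directions. The inclusion $W_0^{1,p}(-T,T) \subset W_{pw}^{1,p,+}(-T,T)$ is immediate via the trivial partition $\{-T,T\}$ (for $u \in W_0^{1,p}$ the boundary condition $u(\pm T)=0$ gives (B), and (C) is vacuous), so
\[
\inf\bigl\{\textstyle\int_{-T}^{T}(u'(t)^+)^p\,dt : u \in W_{pw}^{1,p,+}(-T,T),\ \lVert u\rVert_p = 1\bigr\} \;\le\; \lambda_p^+(-T,T).
\]
The actual content is the reverse inequality: every $u \in W_{pw}^{1,p,+}(-T,T)$ must be approximable by elements of $W_0^{1,p}(-T,T)$ whose Rayleigh quotient $\int(u'^+)^p/\int|u|^p$ converges to that of $u$.

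My construction inserts short ``descent'' intervals at every singular point of $u$. Let $-T = T_0 < T_1 < \cdots < T_m = T$ be the partition of $u$. For small $\delta>0$ I reserve $m+1$ disjoint sub-intervals of length $\delta$ inside $(-T,T)$: one at each boundary point $\pm T$ and one straddling each interior jump point $T_1,\dots,T_{m-1}$. On the $m$ complementary sub-intervals I set $u_\delta := u\circ\psi_i$, where $\psi_i$ is the increasing affine bijection from the shortened piece onto the original piece $(T_{i-1},T_i)$. On each inserted interval, $u_\delta$ is the linear interpolation between the two matching trace values at its endpoints. By construction $u_\delta$ is continuous across every junction, equals $0$ at $\pm T$, and is piecewise $W^{1,p}$, so $u_\delta \in W_0^{1,p}(-T,T)$.

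The decisive observation is that the slope of $u_\delta$ on every inserted interval is non-positive: at $-T$ the interpolation descends from $0$ to $u(-T)\le 0$; at each interior $T_i$ it descends from $u(T_i^-)$ to the strictly smaller $u(T_i^+)$ by (C); at $T$ it descends from $u(T)\ge 0$ down to $0$. Thus $(u_\delta')^+\equiv 0$ on every insertion, and a routine change of variables on each rescaled piece gives
\[
\int_{-T}^{T}(u_\delta'^+)^p\,dt = \sum_{i=1}^{m} (\psi_i')^{p-1}\int_{T_{i-1}}^{T_i}(u'^+)^p\,ds, \qquad \int_{-T}^{T}|u_\delta|^p\,dt = \sum_{i=1}^{m}(\psi_i')^{-1}\int_{T_{i-1}}^{T_i}|u|^p\,ds + O(\delta),
\]
the $O(\delta)$ coming from the boundedness of $u$ (one-dimensional Sobolev embedding on each piece). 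Since $\psi_i'\to 1$ as $\delta\to 0$, both integrals converge to the corresponding integrals of $u$, and so the Rayleigh quotient of $u_\delta$ converges to that of $u$. Passing $\delta\to 0$ and then taking the infimum over $u\in W_{pw}^{1,p,+}$ closes the reverse inequality. The only subtle point — and really the whole reason the construction succeeds — is that the sign conditions (B) and (C) built into $W_{pw}^{1,p,+}$ are exactly what force each insertion interval to carry a non-positive slope, in resonance with the one-sided nature of the functional $\int(u'^+)^p$; beyond this bookkeeping there is no genuine obstacle.
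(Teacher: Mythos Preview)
Your proposal is correct and follows essentially the same strategy as the paper: construct a $W_0^{1,p}$ approximation by inserting short linear connectors at the jump points and at the two boundary points, observe that conditions (B) and (C) force each connector to have non-positive slope and hence contribute nothing to $\int(u'^+)^p$, and check that the Rayleigh quotient converges. The only cosmetic difference is that you affinely compress each Sobolev piece of $u$ to make room for the inserted segments, whereas the paper simply overwrites a short sub-interval near each $T_i$ (and near $\pm T$ when $u(-T)<0$ or $u(T)>0$) with the linear connector; both devices yield the same limit.
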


\begin{proof}
Let $u \in W_{pw}^{1,p,+}(-T,T)$ be a function such that $u(-T) = 0 = u(T)$. For each integer $k \geq 1$ large enough, define the continuous function $u_k(t)$ as
\[
  u_k(t) :=
    \begin{cases}
     u(t) & \text{if $t \in \cup^{m-1}_{i = 1}[T_{i-1},T_i - \frac 1k] \cup [T_{m-1}, T]$},\\
      a_k^i t + b_k^i & \text{if $t \in (T_i - \frac 1k, T_i)$ for}\ i=1, \ldots, m-1,\\
    \end{cases}
\]
where $a_k^i := k\left(u(T_i^+) - u(T_i - \frac 1k)\right)$ and $b_k^i := u(T_i^+) - a_k^i T_i$. Clearly, $u_k \in W^{1,p}_0(-T,T)$ and $a_k^i < 0$ for every $i = 1, \ldots, m-1$ and $k$ large. Since $u \in W^{1,p}(T_{i-1}, T_i)$ for all $i$, we have $u \in L^\infty(-T,T)$ and so letting $k \rightarrow \infty$, we get
\[
\int_{-T}^{T} \vert u_k(t) \vert^p dt \rightarrow \int_{-T}^{T} \vert u(t) \vert^p dt,
\]
\[
\int_{-T}^{T} \left( u_k'(t)^+ \right)^p dt = \sum_{i=1}^{m-1} \int_{T_{i-1}}^{T_i - \frac 1k} \left( u'(t)^+ \right)^p dt + \int_{T_{m-1}}^T \left( u'(t)^+ \right)^p dt \rightarrow \int_{-T}^{T} \left( u'(t)^+ \right)^p dt.
\]
Assume now that $u(-T) < 0$ or $u(T) > 0$. In each case we modify the function $u_k$ in the intervals $[T_0, T_0 + \frac 1k]$ and $[T_m - \frac 1k, T_m]$, respectively. In fact, consider the new function $v_k \in W^{1,p}_0(-T,T)$:
\[
  v_k(t) :=
    \begin{cases}
    a_k^0 t + b_k^0 & \text{if $t \in [T_0,T_0 + \frac 1k]$ in case $u(-T) < 0$},\\
    a_k^m t + b_k^m & \text{if $t \in [T_m - \frac 1k,T_m]$ in case $u(T) > 0$},\\
     u_k(t) & \text{otherwise},\\
    \end{cases}
\]
where
\[
a_k^0 := ku(T_0 + \frac 1k),\ \ b_k^0 := k T u(T_0 + \frac 1k),
\]
\[
a_k^m := -ku(T_m - \frac 1k),\ \ \ b_k^0 := k T u(T_m - \frac 1k).
\]
Since $a_k^0$ and $a_k^m$ are negative for $k$ large, arguing exactly as before, we obtain
\[
\int_{-T}^{T} \vert v_k(t) \vert^p dt \rightarrow \int_{-T}^{T} \vert u(t) \vert^p dt,
\]
\[
\int_{-T}^{T} \left( v_k'(t)^+ \right)^p dt = \int_{T_0 + \frac 1k}^{T_m - \frac 1k} \left( u_k'(t)^+ \right)^p dt \rightarrow \int_{-T}^{T} \left( u'(t)^+ \right)^p dt.
\]
This leads us to the desired statement.
\end{proof}

The main result of this subsection is
\begin{teor} \label{T1}
For any $p > 1$ and $T > 0$, we have:
\begin{itemize}
\item[(i)] $\lambda^+_{p}(-T,T) = \displaystyle \frac{1}{2^p}\lambda_{1,p}(-T,T)$;
\item[(ii)] $u \in W_{pw}^{1,p,+}(-T,T)$ is an extremizer of \eqref{SPI} if and only if either $u = u_p$ or $u = v_p$, where
\[
u_p(t) = c \varphi_p\left(\frac{t-T}{2}\right) \ \ \text{and}\ \ v_p(t) = -c \varphi_p\left(\frac{t+T}{2}\right)\ \ \text{for}\ t \in (-T,T),
\]
where $c$ is any positive constant and $\varphi_p \in W^{1,p}_{0}(-T,T)$ is the principal eigenfunction of the $p$-Laplace operator $- \Delta_p$ associated to the eigenvalue $\lambda_{1,p}(-T,T)$ normalized by $\Vert \varphi_p \Vert_p = 1$.

\end{itemize}
\end{teor}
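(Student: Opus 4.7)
The plan is to establish matching upper and lower bounds using a running-maximum construction paired with an even reflection across $t = T$, combined with Proposition~\ref{P1} to pass from $W^{1,p}_{0}(-T,T)$ to $W_{pw}^{1,p,+}(-T,T)$. For the upper bound I test the candidate $u_p(t) = c\varphi_p((t-T)/2)$: since $t \mapsto (t-T)/2$ sends $(-T,T)$ bijectively onto $(-T,0)$, where $\varphi_p$ is strictly increasing, $u_p$ is monotone non-decreasing and $(u_p')^{+} = u_p'$; a change of variables combined with the symmetry $\varphi_p(-s)=\varphi_p(s)$ yields $\int (u_p')^p / \int u_p^p = 2^{-p}\lambda_{1,p}(-T,T)$, and the same computation works for $v_p$.

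For the lower bound, I work first in $W^{1,p}_{0}(-T,T)$. Splitting $u = u^{+} - u^{-}$, and using $u' = 0$ a.e.\ on $\{u=0\}$ together with the standard truncation identities $(u^{+})' = u'\chi_{\{u>0\}}$ and $(u^{-})' = -u'\chi_{\{u<0\}}$, I obtain
\[
\int ((u')^{+})^p = \int (((u^{+})')^{+})^p + \int (((u^{-})')^{-})^p, \qquad \int |u|^p = \int (u^{+})^p + \int (u^{-})^p,
\]
which reduces matters to the one-sided claim $\int ((v')^{+})^p \geq 2^{-p}\lambda_{1,p}(-T,T)\int v^p$ for non-negative $v \in W^{1,p}_{0}(-T,T)$ (the $u^{-}$ contribution follows from this applied to $v(t) = u^{-}(-t)$, i.e.\ to $\hat u(t) = -u(-t)$). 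To prove the one-sided inequality, introduce the running maximum $\tilde v(t) := \sup_{s \leq t} v(s)$, which is continuous, non-decreasing, vanishes at $-T$, dominates $v$, and satisfies $\tilde v' \leq (v')^{+}$ a.e.\ (on connected components of $\{\tilde v > v\}$ the function $\tilde v$ is locally constant, so $\tilde v' = 0$, and elsewhere $\tilde v = v$ with $\tilde v' = (v')^{+}$). Extending $\tilde v$ to $(-T, 3T)$ by even reflection across $t = T$ produces $\bar v \in W^{1,p}_{0}(-T, 3T)$, and the classical Poincaré inequality on the doubled interval, together with the scaling $\lambda_{1,p}(-T, 3T) = 2^{-p}\lambda_{1,p}(-T,T)$ and the reflection identities $\int |\bar v'|^p = 2\int (\tilde v')^p$, $\int \bar v^p = 2\int \tilde v^p$, gives
\[
\int ((v')^{+})^p \geq \int (\tilde v')^p \geq 2^{-p}\lambda_{1,p}(-T,T)\int \tilde v^p \geq 2^{-p}\lambda_{1,p}(-T,T)\int v^p.
\]
Summing the two contributions and invoking Proposition~\ref{P1} then yields $\lambda_p^{+}(-T,T) = 2^{-p}\lambda_{1,p}(-T,T)$.

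For the characterization of extremizers, I retrace the chain of inequalities. The final step $\int \tilde v^p \geq \int v^p$ is strict unless $v = \tilde v$, forcing $u^{+}$ to be non-decreasing; Poincaré equality on $(-T,3T)$ then forces $\bar v$ to be a positive multiple of the unique principal eigenfunction of that interval, which by a direct change of variable in the eigenvalue equation is $\varphi_p((t-T)/2)$. Hence $u^{+}(t) = c_1\varphi_p((t-T)/2)$, and an analogous argument applied to $\hat u = -u(-\cdot)$ gives $u^{-}(t) = c_2\varphi_p((t+T)/2)$. Both expressions are strictly positive on $(-T,T)$, so the disjoint-support condition $u^{+}u^{-} \equiv 0$ forces $c_1 c_2 = 0$, yielding either $u = u_p$ or $u = v_p$. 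This analysis carries over verbatim to $W_{pw}^{1,p,+}(-T,T)$ because the running maximum is insensitive to downward jumps, and any extremizer in the larger space must in particular be monotone, hence continuous.

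The main technical point to be justified rigorously is the pointwise-a.e.\ bound $\tilde v' \leq (v')^{+}$ for the running maximum of a Sobolev function, together with its rigid equality case; this is handled via a Lebesgue decomposition of the domain into the constancy plateaus of $\tilde v$ (where $\tilde v > v$) and the coincidence set $\{\tilde v = v\}$, on which Stampacchia-type chain-rule identities apply.
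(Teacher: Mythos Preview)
Your argument is correct and genuinely different from the paper's. The paper first proves a separate reduction result (Proposition~2.2) showing, via a three-step process (eliminating interior zeros by rescaling onto sub-intervals, removing jumps by adding a step-correction $j_u$, and flattening decreasing stretches), that the infimum over $W_{pw}^{1,p,+}$ equals the infimum over the set $X_0$ of non-decreasing $W^{1,p}$ functions vanishing at one endpoint; it then reflects a function in $X_0$ to obtain a competitor for $\lambda_{1,p}(-T,T)$. For the characterization the paper argues in three further steps: sign-definiteness via strict domain monotonicity of $\lambda_p^+$, continuity via the jump-correction, and monotonicity via the Euler--Lagrange ODE, before reflecting.

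Your route bypasses all of this. The $u^+/u^-$ splitting together with the running maximum $\tilde v(t)=\sup_{s\le t}v(s)$ replaces the entire three-step reduction in one stroke, and the even reflection onto $(-T,3T)$ plays the same role as the paper's doubling. For the rigidity you trace equality through the chain $\int((v')^+)^p\ge\int(\tilde v')^p\ge 2^{-p}\lambda_{1,p}\int\tilde v^p\ge 2^{-p}\lambda_{1,p}\int v^p$, which avoids any ODE analysis; the disjoint-support observation $u^+u^-\equiv 0$ then forces one of $c_1,c_2$ to vanish. Your approach is shorter and more self-contained, while the paper's reduction makes the competitor structure more explicit and yields Proposition~2.2 as an independent byproduct. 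The one point you should spell out carefully in the final write-up is the extension to $W_{pw}^{1,p,+}$: the running-maximum bound $\tilde v(t_2)-\tilde v(t_1)\le\int_{t_1}^{t_2}(v')^+$ indeed survives downward jumps, and the reflection $t\mapsto u^-(-t)$ converts the upward jumps of $u^-$ into downward ones, so both halves of the split land in the right class; this is exactly what makes your equality-tracing work in the larger space without appealing to Proposition~\ref{P1}.
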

\n As a direct consequence of the above statement, all extremizers of \eqref{SPI} are in $W^{1,p}(-T,T) \setminus W_0^{1,p}(-T,T)$, vanish at one of the endpoints of $(-T,T)$ and have defined sign.

A key ingredient in the proof of the theorem consists in finding a suitable set $X_0 \subset W_{pw}^{1,p,+}(-T,T)$ where possible extremizers must necessarily lie. Consider then the following set
\[
X_0 := \left\{u \in W^{1,p}(-T,T) \colon\  u \ \text{is non-decreasing}, \ \lVert u \rVert_{p} = 1\ \text{and either} \ u(-T) = 0 \ \text{or} \ u(T) = 0\right\}.
\]
\begin{propo} \label{P2} For any $p > 1$ and $T > 0$, we have
\[
\lambda_{p}^{+}(-T,T) = \inf_{\substack{u \in X_0}}\ \int_{-T}^{T} \left( u'(t)^+ \right)^p dt.
\]
\end{propo}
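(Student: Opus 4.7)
My plan is to prove the two inequalities separately. The direction $\lambda_{p}^{+}(-T,T) \le \inf_{u \in X_0} \int_{-T}^{T}(u'^{+})^{p}\,dt$ follows from Proposition~\ref{P1}: every $u \in X_0$ is continuous (being in $W^{1,p}$), non-decreasing (so $u'^{+} = u'$ a.e.), has $u(-T) \le 0$ and $u(T) \ge 0$, and has $\|u\|_{p}=1$; hence $X_0$ is contained in the admissible class appearing in Proposition~\ref{P1}, and the infimum over a smaller set can only be larger.

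For the reverse inequality I would fix an arbitrary $u \in W_{pw}^{1,p,+}(-T,T)$ with $\|u\|_{p}=1$ and construct two monotone envelopes
\[
\tilde u_{+}(t) := \sup_{s \in (-T,t]} u^{+}(s), \qquad \tilde u_{-}(t) := -\sup_{s \in [t,T)} u^{-}(s),
\]
the first a ``rising sun'' from the left applied to $u^{+}$ and the second its mirror image applied to $-u^{-}$. The downward jumps of $u$ along its partition $\mathcal P$ translate into downward jumps of $u^{+}$ absorbed by the left-to-right supremum and upward jumps of $u^{-}$ absorbed by the right-to-left supremum, so $\tilde u_{\pm}$ are continuous and non-decreasing; standard arguments on monotone envelopes give $\tilde u_{\pm} \in W^{1,p}(-T,T)$. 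By construction $\tilde u_{+}(-T) = 0 = \tilde u_{-}(T)$ and pointwise $\tilde u_{+} \ge u^{+} \ge 0 \ge -u^{-} \ge \tilde u_{-}$, so $\|\tilde u_{\pm}\|_{p} \ge \|u^{\pm}\|_{p}$. The core energy estimate exploits that $\tilde u_{\pm}'$ vanishes a.e.\ on the open set where the envelope strictly dominates $\pm u^{\pm}$ (it is locally constant there) and equals $(\pm u^{\pm})'$ on the contact set; monotonicity of $\tilde u_{\pm}$ forces $(\pm u^{\pm})' \ge 0$ a.e.\ there. Combining this with the disjoint-support identities $(u^{+})'^{+} = u'^{+}\chi_{\{u>0\}}$ and $((-u^{-})')^{+} = u'^{+}\chi_{\{u<0\}}$ yields
\[
\int_{-T}^{T}(\tilde u_{+}')^{p}\,dt + \int_{-T}^{T}(\tilde u_{-}')^{p}\,dt \;\le\; \int_{-T}^{T}(u'^{+})^{p}\,dt,
\]
while $\|\tilde u_{+}\|_{p}^{p} + \|\tilde u_{-}\|_{p}^{p} \ge \|u^{+}\|_{p}^{p} + \|u^{-}\|_{p}^{p} = 1$.

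Normalizing $v_{\pm} := \tilde u_{\pm}/\|\tilde u_{\pm}\|_{p}$ produces two members of $X_0$ (the degenerate cases $u^{\pm}\equiv 0$ simply leave a single surviving envelope). The elementary bound $\min(a/\alpha, b/\beta) \le (a+b)/(\alpha+\beta)$ for $\alpha,\beta > 0$ then gives
\[
\min\!\left(\int_{-T}^{T}(v_{+}')^{p}\,dt,\ \int_{-T}^{T}(v_{-}')^{p}\,dt\right) \;\le\; \int_{-T}^{T}(u'^{+})^{p}\,dt,
\]
so $\inf_{X_0}\int (u'^{+})^{p}\,dt \le \int_{-T}^{T}(u'^{+})^{p}\,dt$; taking the infimum over $u$ and invoking Proposition~\ref{P1} closes the proof. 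The main obstacle I anticipate is the rigorous justification of the envelope derivative formula—equivalently, the absolute continuity of $\tilde u_{\pm}$ together with the pointwise identification of $\tilde u_{\pm}'$ on the contact set—which must simultaneously accommodate the finitely many downward jumps of $u$ along $\mathcal P$ and the merely $W^{1,p}$ regularity of each piece.
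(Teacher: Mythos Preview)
Your argument is correct, and it takes a genuinely different route from the paper's proof.

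The paper proceeds by a three-step reduction. First, after restricting to functions with finitely many zeros (by density), it splits any $u$ at an interior zero $t_0$, rescales each half back to $(-T,T)$, and uses the mediant inequality $\frac{J_1'}{J_1}\wedge\frac{J_2'}{J_2}\le\frac{J_1'+J_2'}{J_1+J_2}$ to pass to a function with no interior zeros (the set $X_2$). Second, it removes jumps by adding a piecewise-constant ``jump correction'' $j_u\ge 0$, which only increases the $L^p$-norm while leaving the energy unchanged (the set $X_1$). Third, it flattens the decreasing stretches of a polynomial approximant and again applies a jump correction to land in $X_0$.

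Your approach collapses all three steps into a single construction. By taking the rising-sun envelope of $u^+$ from the left and of $-u^-$ from the right, you simultaneously (i) absorb the downward jumps, (ii) achieve monotonicity, and (iii) separate signs --- and the mediant inequality then enters once, comparing the two normalized envelopes $v_\pm$. The trade-off is that your proof leans on the regularity of the monotone envelope of a piecewise-$W^{1,p}$ function, which you rightly flag: one must check that $\tilde u_\pm$ is absolutely continuous with $\tilde u_\pm'\le(\pm u^\pm)'^+$ a.e., which follows from the estimate $F(b)-F(a)\le\int_a^b(f')^+$ on each piece together with continuity of $F$ across the (downward) jumps. The paper's proof, by contrast, uses only elementary cut-and-paste constructions, and its intermediate sets $X_1,X_2$ reappear in the extremizer analysis of Theorem~\ref{T1}; your method is more streamlined here but would need a separate argument at that later stage.
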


\begin{proof} 
	The proof is based on a three-steps reduction process, transforming the infimum in $W_{pw}^{1,p,+}(-T,T)$ into the infimum in $X_0$.
Observe that any $u \in W_{pw}^{1,p,+}(-T,T)$ must have at least one zero in $[-T,T]$.

\n \textbf{First reduction:} We first restrict $W_{pw}^{1,p,+}(-T,T)$ to functions with a finite number of zeros, since this is a dense subset (this is due to this subset contains the subset of polynomials restricted to $(-T,T)$, which is dense). From now on, we will assume $u^{-1}(0)$ is finite for the functions in $W_{pw}^{1,p,+}(-T,T)$. Next we prove that the infimum can be reduced to the set
\[
X_{2} \coloneqq \left\{u \in W_{pw}^{1,p,+}(-T,T) \colon \ \lVert u \rVert_{p} = 1 \ \text{and} \ u(t) = 0 \ {\rm implies}\ t \in \{-T,T\}\right\}.
\]
Let $u \in W_{pw}^{1,p,+}(-T,T)$ be such that $\lVert u \rVert_{p} = 1$ and assume there is $t_{0} \in (-T,T)$ such that $u(t_{0}) = 0$ (i.e. $u \in X_{2}^{c}$). Clearly, $u \in W_{pw}^{1,p,+}(-T,t_{0}) \cap W_{pw}^{1,p,+}(t_{0},T)$.

Consider the following notations
\[
I_{v} = \int_{-T}^{T} |v|^{p},\ I_{v}' = \int_{-T}^{T} ((v')^{+})^{p}
\]
for any $v \in W_{pw}^{1,p,+}(-T,T)$ and
\[
J_{1} = \int_{-T}^{t_{0}} |u|^{p},\ J_{1}' = \int_{-T}^{t_{0}} ((u')^{+})^{p},\ J_{2} = \int_{t_{0}}^{T} |u|^{p},\ J_{2}' = \int_{t_{0}}^{T} ((u')^{+})^{p} .
\]
Now we introduce the functions $v_1(t)$ and $v_2(t)$ for $t \in (-T,T)$:
\begin{align*}
& v_{1}(t) \coloneqq u\left(\frac{(t_{0}+T)t}{2T}+\frac{t_{0}-T}{2}\right), \\
& v_{2}(t) \coloneqq u\left(\frac{(T-t_{0})t}{2T}+\frac{t_{0}+T}{2}\right).
\end{align*}
Straightforward computations yield
\begin{align*}
& I_{v_{1}} = \frac{2T}{T+t_{0}}J_{1},\ I_{v_{1}}' = \left(\frac{T+t_{0}}{2T}\right)^{p-1}J_{1}', \\
& I_{v_{2}} = \frac{2T}{T-t_{0}}J_{2},\ I_{v_{2}}' = \left(\frac{T-t_{0}}{2T}\right)^{p-1}J_{2}'.
\end{align*}
Assume, without loss of generality, that $\frac{J_{1}'}{J_{1}} \leq \frac{J_{2}'}{J_{2}}$. Otherwise, we exchange $v_1$ by $v_2$ in the argument below. This implies that
\[
\frac{I_{v_{1}}'}{I_{v_{1}}} < \left(\frac{2T}{T+t_{0}}\right)^{p}\frac{I_{v_{1}}'}{I_{v_{1}}} = \frac{J_{1}'}{J_{1}} \leq \frac{J_{1}'+J_{2}'}{J_{1}+J_{2}} = J_{1}'+J_{2}',
\]
and this inequality can be rewritten for $w = \frac{v_{1}}{\lVert v_{1} \rVert_{p}}$ as
\[
\int_{-T}^{T} ((w')^{+})^{p} < \int_{-T}^{T} ((u')^{+})^{p}.
\]
We then repeat the process (now with $w$) finitely many times until the resulting function has no zeros in $(-T,T)$ (this eventually ends because we started with a function with a finite number of zeros). Furthermore, by construction, the resulting function is zero at most at the endpoints $-T$ and $T$ and it has lower energy than the initial one. Therefore, we can reduce the infimum to the set $X_{2}$.

Summarizing, it was shown that
\[
\lambda_{p}^{+}(-T,T) = \inf_{\substack{u \in X_2}}\ \int_{-T}^{T} \left( u'(t)^+ \right)^p dt.
\]

\vspace{0.5cm}

\n \textbf{Second reduction:} Now we prove that the infimum can be reduced to the set
\[
X_1 \coloneqq  \left\{u \in X_{2} \colon\ u \in W^{1,p}(-T,T)\right\}.
\]
Let $u \in X_2$ and assume that $u \not\in W^{1,p}(-T,T)$. Since $u \in W_{pw}^{1,p,+}(-T,T)$, there is a partition
\[
{\cal P} = \{-T = T_{0} < T_{1} < \dots < T_{m-1} < T_{m} = T\}
\]
for some $m > 1$ such that $u$ satisfies \ref{item1}, \ref{item2} and \ref{item3}. Also, since $u \in X_{2}$ ($u$ vanishes at most in $\{-T,T\}$) and due to the conditions \ref{item2} and \ref{item3}, $u$ does not change sign in $(-T,T)$.

Assume that $u(-T) = 0$ and $u > 0$ in $(-T,T)$ (the negative case is similar). Define the jump correction function of $u$ as
\[
j_{u} = \sum_{i=1}^{m-1} \left(u(T_{i}^{-})-u(T_{i}^{+})\right)\chi_{(T_{i},T_{i+1})} \geq 0
\]
and consider the following function in $W^{1,p}(-T,T)$:
\[
v = \frac{u+j_{u}}{\lVert u+j_{u}\rVert_{p}}.
\]
Clearly, we have $v \in X_1$. In addition,
$u+j_u \geq u > 0$ implies $\|u+j_u\| \geq \|u\|_p \geq 1$,
so that
\[
\int_{-T}^{T} ((v')^{+})^{p} = \int_{-T}^{T}\left(\frac{(u')^{+}}{\lVert u+j_{u}\rVert_{p}}\right)^{p} \leq \int_{-T}^{T} ((u')^{+})^{p}.
\]
This gives us
\[
\lambda_{p}^{+}(-T,T) = \inf_{\substack{u \in X_1}}\ \int_{-T}^{T} \left( u'(t)^+ \right)^p dt.
\]

\vspace{0.5cm}

\noindent\textbf{Final reduction:} The last step consists in reducing the infimum to the initially introduced set
\[
X_0 = \{u \in X_{1} \colon\ u \ \text{is non-decreasing}\}.
\]
Next we show that a dense subset of $X_{1}$ can be reduced to a dense subset of $X_0$. Take a polynomial $q \in X_{1}$ such that $q(-T) = 0$ and $q > 0$ in $(-T,T)$ (again the case $u < 0$ is similar) and write
\[
\left[(q')^{-1}(-\infty,0)\right] \cap (-T,T) = \bigcup_{i=0}^{N} (t_{2i},t_{2i+1}),
\]
where the intervals in the union are disjoint. Now define the function $u(t)$ for $t \in (-T,T)$ as
\begin{equation*}
u(t) =
\begin{cases}
\begin{aligned}
     & q(t) \ \ \ \ \text{if} \ t \in (q')^{-1}[0,+\infty), \\
     & q(t_{2i}) \ \ \text{if} \ t \in (t_{2i},t_{2i+1}).
\end{aligned}
\end{cases}
\end{equation*}
Note that $u \in W_{pw}^{1,p,+}(-T,T)$ and $u(-T) = 0$. Consider $j_{u}$ as before and define again
\[
v = \frac{u+j_{u}}{\lVert u+j_{u}\rVert_{p}}.
\]
It is clear that $v \in X_0$. Moreover, we have
\begin{align*}
\lVert u+j_{u}\rVert_{p}^{p} 
	&\geq \int_{-T}^{T} |u(t)|^{p} \ dt \\
	&= \int_{(q')^{-1}[0,+\infty)} |q(t)|^{p} \ dt + \sum_{i=0}^{N} \int_{t_{2i}}^{t_{2i+1}} |q(t_{2i})|^p \ dt \\
	&\geq \int_{(q')^{-1}[0,+\infty)} |q(t)|^{p} \ dt + \sum_{i=0}^{N} \int_{t_{2i}}^{t_{2i+1}} |q(t)|^p \ dt \\
	&= \int_{-T}^{T} |q(t)|^p \ dt = 1.
\end{align*}
Therefore,
\[
\int_{-T}^{T} ((v')^{+})^{p} = \frac{1}{\lVert u+j_{u}\rVert_{p}^{p}}\int_{-T}^{T} ((q')^{+})^{p} \leq \int_{-T}^{T} ((q')^{+})^{p}
\]
and so we can reduce the infimum to the set $X_0$. This completes the proof of the proposition.
\end{proof}

Now we are ready to prove the main result of this subsection.
\begin{proof}[Proof of Theorem \ref{T1}] We start assuming by contradiction that
\[
\lambda_{p}^{+}(-T,T) < \frac{\lambda_{1,p}(-T,T)}{2^p}.
\]
From Proposition \ref{P2}, there is a function $u \in X_0$ such that
\[
\int_{-T}^{T} \left( u'(t)^+ \right)^p \ dt < \frac{\lambda_{1,p}(-T,T)}{2^p}.
\]
Assume $u(-T) = 0$ and $u > 0$ in $(-T,T)$ (the negative case is analogous). Define the function $w(t)$ for $t \in (-T,T)$ as
\begin{equation*}
w(t) =
\begin{cases}
\begin{aligned}
     & u(2t+T) \ \ \text{if} \ t \in (-T,0), \\
     & u(T-2t) \ \ \text{if} \ t \in [0,T).
\end{aligned}
\end{cases}
\end{equation*}
It is clear that $w \in W^{1,p}_{0}(-T,T)$. In addition,
\begin{align*}
\int_{-T}^{T} |w(t)|^p \ dt &= \int_{-T}^{0} |u(2t+T)|^p \ dt + \int_{0}^{T} |u(T-2t)|^p \ dt \\
&= \frac{1}{2}\int_{-T}^{T} |u(s)|^p \ ds + \frac{1}{2}\int_{-T}^{T} |u(s)|^p \ ds \\
&= \int_{-T}^{T} |u(s)|^p \ ds \\
&= 1
\end{align*}
and, since $u$ is non-decreasing in $(-T,T)$, we have
\begin{align*}
\int_{-T}^{T} |w'(t)|^p \ dt &= \int_{-T}^{0} 2^{p}|u'(2t+T)|^p \ dt + \int_{0}^{T} 2^{p}|u'(T-2t)|^p \ dt \\
&= 2^{p}\left(\frac{1}{2}\int_{-T}^{T} \left( u'(t)^+ \right)^p \ dt +\frac{1}{2}\int_{-T}^{T} \left( u'(t)^+ \right)^p \ dt \right) \\
	&= 2^{p}\int_{-T}^{T} \left( u'(t)^+ \right)^p \ dt < \lambda_{1,p}(-T,T),
\end{align*}
which contradicts the definition of $\lambda_{1,p}(-T,T)$. Thus,
\[
\lambda_{p}^{+}(-T,T) \geq \frac{\lambda_{1,p}(-T,T)}{2^p}.
\]
Now we define the function $u_{p}(t)$ for $t \in (-T,T)$ as
\[
u_{p}(t) \coloneqq \varphi_{p}\left(\frac{t-T}{2}\right).
\]
Note that $u_{p} \in X_0$. Also, since $\varphi_{p}$ is even in $(-T,T)$ and increasing in $(-T,0)$, we obtain
\begin{align*}
\int_{-T}^{T} |u_{p}(t)|^{p} \ dt
	&= \int_{-T}^{T} \left|\varphi_{p}\left(\frac{t-T}{2}\right)\right|^{p} \ dt \\
	&= 2\int_{-T}^{0} |\varphi_{p}(t)|^{p} \ dt \\
	&= \int_{-T}^{T} |\varphi_{p}(t)|^{p} \ dt \\
	&= 1
\end{align*}
and
\begin{align*}
\int_{-T}^{T} \left( u_p'(t)^+ \right)^p \ dt 
	&= \int_{-T}^{T} \left|\frac{1}{2}\varphi_{p}'\left(\frac{t-T}{2}\right)\right|^{p} \ dt \\
	&= \frac{1}{2^{p}}\left(2\int_{-T}^{0} |\varphi_{p}'(t)|^{p} \ dt\right) \\
	&= \frac{1}{2^{p}} \int_{-T}^{T} |\varphi_{p}'(t)|^{p} \ dt \\
	&= \frac{\lambda_{1,p}(-T,T)}{2^p},
\end{align*}
which implies that
\[
\lambda_{p}^{+}(-T,T) \leq \frac{\lambda_{1,p}(-T,T)}{2^p}
\]
and so the equality occurs. Moreover, the functions $u_p$ and $v_p$ defined in the statement are extremizers of \eqref{SPI}.

We now concentrate on the characterization of all extremizers in the set $W_{pw}^{1,p,+}(-T,T)$. Assume that $u \in W_{pw}^{1,p,+}(-T,T)$ satisfies $\lVert u\rVert_p = 1$ and
\[
\int_{-T}^{T} \left( u'(t)^+ \right)^p dt = \lambda_{p}^{+}(-T,T).
\]
Next we ensure that $u \in X_0$. The proof is carried out in three steps.

\n \textbf{First step:} We claim that the function $u$ does not change sign in $(-T,T)$. Otherwise, by \ref{item1}, \ref{item2} and \ref{item3} in the definition of $W_{pw}^{1,p,+}(-T,T)$, we have $u(t_0) = 0$ for some $t_0 \in (-T,T)$ and so $u \in W_{pw}^{1,p,+}(-T,t_0) \cap W_{pw}^{1,p,+}(t_0,T)$. Consequently,
\begin{align*}
	\lambda_{p}^{+}(-T,T)
	&= \int_{-T}^{T} \left( u'(t)^+ \right)^p dt \\
	&= \int_{-T}^{t_0} \left( u'(t)^+ \right)^p dt + \int_{t_0}^{T} \left( u'(t)^+ \right)^p dt\\
	&= \lambda_{p}^{+}(-T,t_0) \int_{-T}^{t_0} |u(t)|^p dt + \lambda_{p}^{+}(t_0,T) \int_{t_0}^{T} |u(t)|^p dt \\
	&\geq \min\{\lambda_{p}^{+}(-T,t_0),\lambda_{p}^{+}(t_0,T)\},
\end{align*}
which contradicts the strict monotonicity $\lambda_{p}^{+}(-T,T) < \lambda_{p}^{+}(-T,t_0)$ and $\lambda_{p}^{+}(-T,T) < \lambda_{p}^{+}(t_0,T)$ with respect to the domain. The latter follows readily from the positivity of $\lambda_{p}^{+}(-T,T)$, invariance property by translation and the scaling property
\[
\lambda_{p}^{+}(-\alpha T,\alpha T) = \frac{1}{\alpha^p} \lambda_{p}^{+}(-T,T)
\]
for every $\alpha > 0$.

\n \textbf{Second step:} According to the previous step, assume $u(-T) = 0$ and $u > 0$ in $(-T,T)$ (the negative case is analogous). Since $u$ is a positive extremizer in $W_{pw}^{1,p,+}(-T,T)$, the same argument used in the second reduction in the proof of Proposition \ref{P2} gives that $u \in W^{1,p}(-T,T)$, that is, $u$ can not have any jump. Otherwise, since $\lambda_{p}^{+}(-T,T) > 0$, it would be possible to construct a correction function $v \in W_{pw}^{1,p,+}(-T,T) \cap W^{1,p}(-T,T)$ so that $\lVert v\rVert_p = 1$ and
\[
\int_{-T}^{T} \left( v'(t)^+ \right)^p dt < \lambda_{p}^{+}(-T,T).
\]

\n \textbf{Third step:} We now assert that $u$ is increasing in $(-T,T)$. Notice first that $u \in W^{1,p}(-T,T)$ satisfies in the weak sense the Euler-Lagrange equation
\begin{equation} \label{eq1}
- \left( \left( (u')^+ \right)^{p-1} \right)' = \lambda_{p}^{+}(-T,T) u^{p-1}\ \ {\rm in}\ (-T,T),
\end{equation}
that is,
\[
\int_{-T}^T \left( u'(t)^+ \right)^{p-1} \varphi'(t) dt = \lambda_{p}^{+}(-T,T) \int_{-T}^T u(t)^{p-1} \varphi(t) dt
\]
for every $\varphi \in W_0^{1,p}(-T,T)$. Since $u$ is continuous in $(-T,T)$, if $v \in C^1(-T,T)$ is a primitive of $-\lambda_{p}^{+}(-T,T) u^{p-1}$, then the above equality yields
\[
\int_{-T}^T \left( \left( u'(t)^+ \right)^{p-1} - v(t) \right) \varphi'(t) dt = 0
\]
for every $\varphi \in W_0^{1,p}(-T,T)$. Evoking the well-known DuBois-Reymond lemma, taking into account that $\left( (u')^+ \right)^{p-1} \in L^1(-T,T)$, it follows that $\left( (u')^+ \right)^{p-1} = v + c$ in $(-T,T)$ for some constant $c$ and so $\left( (u')^+ \right)^{p-1}$ is continuously differentiable in $(-T,T)$. This implies that the equation \eqref{eq1} is satisfied in the classical sense in $(-T,T)$. Therefore, since the right-hand side of \eqref{eq1} is positive, $(u')^+$ is decreasing in $(-T,T)$. Thus, $u'(t)^+ > 0$ for every $t \in (-T,T)$, once $(u')^+$ is non-negative. Consequently, $u'$ is positive and so $u$ is an increasing $C^1$ function in $(-T,T)$.

Now consider the function
\begin{equation*}
v(t) =
\begin{cases}
\begin{aligned}
     & u(2t+T)\ \ \text{if} \ t \in (-T,0), \\
     & u(T-2t)\ \ \text{if} \ t \in [0,T).
\end{aligned}
\end{cases}
\end{equation*}
Clearly, $v \in W^{1,p}_0(-T,T)$,
\[
\int_{-T}^{T} |v(t)|^p\ dt = 1
\]
and, since $u$ is increasing in $(-T,T)$,
\[
\int_{-T}^{T} |v'(t)|^p \ dt  = 2^{p}\int_{-T}^{T} \left( u'(t)^+ \right)^p dt = \lambda_{1,p}(-T,T).
\]
Therefore, we must have $v = \varphi_{p}$, which implies
\[
u(t) = \varphi_{p}\left(\frac{t-T}{2}\right)\ \ \text{for}\ t \in (-T,T).
\]
This concludes the proof of theorem.
\end{proof}

\subsection{The case $b > 0$}

This subsection is devoted to the complete study of the sharp asymmetric Poincaré inequality
\begin{equation} \label{API}
\int_{-T}^{T} a^p \left( u'(t)^+ \right)^p + b^p \left( u'(t)^- \right)^p dt \geq \lambda^{a,b}_{p}(-T,T) \int_{-T}^{T} \vert u(t) \vert^p dt
\end{equation}
for every $u \in W_0^{1,p}(-T,T)$ and any fixed numbers $a, b > 0$.

Our result is the following
\begin{teor} \label{T2} Let $p > 1$ and $T > 0$. For any $a, b > 0$, we have:
\begin{itemize}
\item[(i)] $\lambda_{p}^{a,b}(-T,T) = \displaystyle \left(\frac{a+b}{2}\right)^{p}\lambda_{1,p}(-T,T)$;
\item[(ii)] $u \in W_0^{1,p}(-T,T)$ is an extremizer of \eqref{API} if and only if either $u = u_p$ or $u = v_p$, where
\begin{equation*}
u_p(t) =
\begin{cases}
\begin{aligned}
     & c \varphi_{p}\left(\frac{T(t-t_{0})}{T+t_{0}}\right)\ \ \text{if} \ t \in (-T,t_{0}), \\
     & c \varphi_{p}\left(\frac{T(t-t_{0})}{T-t_{0}}\right)\ \ \text{if} \ t \in [t_{0},T)
\end{aligned}
\end{cases}
\end{equation*}
and
\begin{equation*}
v_p(t) =
\begin{cases}
\begin{aligned}
     & -c \varphi_{p}\left(\frac{T(t+t_{0})}{T-t_{0}}\right)\ \ \text{if} \ t \in (-T,-t_{0}), \\
     & -c \varphi_{p}\left(\frac{T(t+t_{0})}{T+t_{0}}\right)\ \ \text{if} \ t \in [-t_{0},T),
\end{aligned}
\end{cases}
\end{equation*}
where $c$ is any positive constant, $\varphi_{p} \in W^{1,p}_0(-T,T)$ is the principal eigenfunction of the $p$-Laplace operator normalized by $\Vert \varphi_{p} \Vert_p = 1$ and
\[
t_{0} = \left(\frac{a-b}{a+b}\right)T.
\]
\end{itemize}
\end{teor}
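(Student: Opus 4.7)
The plan is to (1) establish existence of an extremizer by the direct method, (2) rule out sign changes via strict domain monotonicity of $\lambda_p^{a,b}$, (3) use Euler-Lagrange together with a DuBois-Reymond argument (as in Step 3 of the proof of Theorem \ref{T1}) to force the extremizer to be strictly unimodal, and (4) identify it on each side of its maximum by reflecting to an ordinary principal $p$-Laplace eigenfunction on a larger interval.

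Since $a,b>0$, the integrand $x\mapsto a^p(x^+)^p+b^p(x^-)^p$ is convex and dominates $\min(a,b)^p|x|^p$, so the functional on the left-hand side of \eqref{API} is coercive and weakly lower semicontinuous on $W_0^{1,p}(-T,T)$; the compact embedding $W_0^{1,p}(-T,T)\hookrightarrow L^p(-T,T)$ preserves the constraint $\|u\|_p=1$ under weak limits, so the infimum $\lambda_p^{a,b}(-T,T)$ is attained by some $u^*$. The same lower bound yields $\lambda_p^{a,b}(-T,T)\geq \min(a,b)^p\lambda_{1,p}(-T,T)>0$, and by affine change of variable one has $\lambda_p^{a,b}(\alpha,\beta)=(2T/(\beta-\alpha))^p\lambda_p^{a,b}(-T,T)$, so $\lambda_p^{a,b}$ is strictly decreasing under strict inclusion of intervals. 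If $u^*(t_0)=0$ for some $t_0\in(-T,T)$, then on each connected component $(\alpha_i,\beta_i)\subsetneq(-T,T)$ of $\{u^*\neq 0\}$ we have $u^*|_{(\alpha_i,\beta_i)}\in W_0^{1,p}(\alpha_i,\beta_i)$, and additivity plus strict monotonicity give
\[
\text{LHS of \eqref{API}}=\sum_i\int_{\alpha_i}^{\beta_i}(\cdots)\,dt\geq \sum_i\lambda_p^{a,b}(\alpha_i,\beta_i)\int_{\alpha_i}^{\beta_i}|u^*|^p\,dt>\lambda_p^{a,b}(-T,T),
\]
contradicting that $u^*$ achieves the infimum. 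Hence $u^*$ has constant sign in $(-T,T)$; I treat $u^*>0$, as the $u^*<0$ case follows by the $a\leftrightarrow b$ symmetry of the problem under $u\mapsto -u$ and produces the family $v_p$.

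The weak Euler-Lagrange equation reads $\int_{-T}^T\phi(u^{*\prime})\varphi'\,dt=\lambda\int_{-T}^T(u^*)^{p-1}\varphi\,dt$ for every $\varphi\in C_c^\infty(-T,T)$, where $\phi(v):=a^p(v^+)^{p-1}-b^p(v^-)^{p-1}$ is a continuous strictly increasing bijection of $\mathbb{R}$ and $\lambda:=\lambda_p^{a,b}(-T,T)>0$. The DuBois-Reymond argument from Step 3 of the proof of Theorem \ref{T1} then shows that $\phi(u^{*\prime})$ coincides a.e.\ with a $C^1$ function whose classical derivative equals $-\lambda(u^*)^{p-1}<0$ on $(-T,T)$. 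Therefore $\phi(u^{*\prime})$ is strictly decreasing; since $\phi^{-1}$ is continuous, so is $u^{*\prime}$, and it is strictly decreasing as well. Combined with $u^*(\pm T)=0$ and $u^*>0$ inside, this forces a unique $t_0\in(-T,T)$ at which $u^{*\prime}(t_0)=0$, with $u^{*\prime}>0$ on $(-T,t_0)$ and $u^{*\prime}<0$ on $(t_0,T)$.

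On $(-T,t_0)$ the EL simplifies to $-a^p((u^{*\prime})^{p-1})'=\lambda(u^*)^{p-1}$ with $u^*(-T)=0$ and $u^{*\prime}(t_0)=0$. Even reflection of $u^*|_{(-T,t_0)}$ across $t_0$ produces a positive function $\bar u\in W_0^{1,p}(-T,2t_0+T)$ of class $C^1$ solving $-\Delta_p\bar u=(\lambda/a^p)\bar u^{p-1}$; uniqueness of the principal $p$-Laplace eigenfunction together with the scaling $\lambda_{1,p}(-T,2t_0+T)=(T/(T+t_0))^p\lambda_{1,p}(-T,T)$ forces
\[
\lambda=a^p\left(\frac{T}{T+t_0}\right)^p\lambda_{1,p}(-T,T),\qquad u^*(t)=c\,\varphi_p\!\left(\frac{T(t-t_0)}{T+t_0}\right)\ \ \text{for }t\in(-T,t_0),
\]
for some $c>0$. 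The analogous reflection on $(t_0,T)$ gives $\lambda=b^p(T/(T-t_0))^p\lambda_{1,p}(-T,T)$ and $u^*(t)=c\,\varphi_p(T(t-t_0)/(T-t_0))$ on $(t_0,T)$, the same constant $c$ being forced by continuity at $t_0$ and $\varphi_p(0)\neq 0$. Equating the two expressions for $\lambda$ yields $a(T-t_0)=b(T+t_0)$, whence $t_0=(a-b)T/(a+b)$ and $\lambda=((a+b)/2)^p\lambda_{1,p}(-T,T)$, proving (i). Using the evenness of $\varphi_p$ and $\|\varphi_p\|_p=1$, a direct change of variable yields $\|u^*\|_p^p=c^p((T+t_0)/(2T)+(T-t_0)/(2T))=c^p$, so $\|u^*\|_p=1$ forces $c=1$. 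This identifies $u^*=u_p$; combined with the $u^*<0$ case (giving $v_p$) and the freedom of an overall positive scalar once the normalization is dropped, one obtains (ii). The main subtle point will be the DuBois-Reymond regularization that promotes the weak EL to a pointwise monotonicity statement about $u^{*\prime}$; everything else reduces to the classical $p$-Laplacian.
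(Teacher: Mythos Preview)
Your proof is correct and reaches the same conclusion as the paper, but the route to strict unimodality of the extremizer is genuinely different. The paper argues as follows: after obtaining $u\in C^1[-T,T]$ from standard regularity, it invokes Harnack's inequality to get $u>0$ and Hopf's lemma to get $u'(-T)>0$, $u'(T)<0$; it then sets $t_0=\min\{u'=0\}$, $t_1=\max\{u'=0\}$ and argues by contradiction that $t_0=t_1$, using the reflection Lemma~\ref{L1} on each side together with an explicit trial function $\tilde u$ to force $\lambda_p^{a,b}(-T,T)\leq\big(\tfrac{a+b}{2}\big)^p\lambda_{1,p}(-T,T)$ and derive a contradiction from $t_0<t_1$. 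You instead run the DuBois--Reymond step (borrowed from the proof of Theorem~\ref{T1}) directly on the full asymmetric equation: since $\phi(v)=a^p(v^+)^{p-1}-b^p(v^-)^{p-1}$ is a strictly increasing bijection and $(\phi(u^{*\prime}))'=-\lambda(u^*)^{p-1}<0$ pointwise, $u^{*\prime}$ itself is continuous and strictly decreasing, giving a unique zero $t_0$ without Harnack, Hopf, or any trial function. This is cleaner and more self-contained; the paper's approach, on the other hand, packages the reflection step as a reusable lemma (Lemma~\ref{L1}), whereas you invoke the even reflection and its weak-solution property without proof. That step---showing that the $C^1$ reflection $\bar u$ across $t_0$ (where $u^{*\prime}(t_0)=0$) is a weak solution of $-\Delta_p\bar u=(\lambda/a^p)\bar u^{p-1}$ on the full reflected interval---is exactly the content of Lemma~\ref{L1} and deserves a line of justification, but it is routine once $u^{*\prime}$ is known to be continuous with $u^{*\prime}(t_0)=0$.
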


The lemma below will be used in the proof of this theorem:
\begin{lema} \label{L1} Let $(\alpha,\beta) \subset (-T, T)$ be two intervals, let $f : \R \rightarrow \R$ be a continuous function and let $u \in C^{1}[-T,T]$ be a weak solution of
\[
- \Delta_{p}^{a,b} u = f(u) \ \text{in} \ (-T,T).
\]
If $u(\alpha) = 0$, $u'(\beta) = 0$ and $u' \geq 0$ in $(\alpha,\beta)$, then the function
\begin{equation*}
w(t) =
\begin{cases}
\begin{aligned}
     & u(t), \ \text{if} \ t \in [\alpha,\beta), \\
     & u(2\beta - t), \ \text{if} \ t \in [\beta,2\beta - \alpha]
\end{aligned}
\end{cases}
\end{equation*}
belongs to $C^1[\alpha, 2 \beta - \alpha]$ and is a weak solution of
\begin{equation*}
\begin{cases}
\begin{aligned}
& -a^{p}\Delta_{p} w = f(w) \ \text{in} \ (\alpha, 2\beta - \alpha), \\
& w(\alpha) = w(2\beta - \alpha) = 0.
\end{aligned}
\end{cases}
\end{equation*}
\end{lema}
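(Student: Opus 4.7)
The plan is to exploit two observations: on $(\alpha,\beta)$ the sign hypothesis $u' \geq 0$ collapses the asymmetric operator $-\Delta_p^{a,b}$ to the isotropic $-a^p\Delta_p$, and the vanishing $u'(\beta) = 0$ allows a smooth even reflection across $\beta$ without creating a jump in the $p$-Laplacian flux $|w'|^{p-2}w'$. The regularity of $w$ is immediate: continuity at $\beta$ is automatic, and the one-sided derivatives $w'(\beta^-) = u'(\beta) = 0$ and $w'(\beta^+) = -u'(\beta) = 0$ match, so $w \in C^1[\alpha,2\beta-\alpha]$; the boundary conditions $w(\alpha) = w(2\beta-\alpha) = u(\alpha) = 0$ hold by construction.

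For the weak equation, fix $\varphi \in C_c^\infty(\alpha,2\beta-\alpha)$ and split the target integrals at $\beta$. On $(\alpha,\beta)$ one has $|w'|^{p-2}w' = (u')^{p-1}$, while on $(\beta,2\beta-\alpha)$ the substitution $s = 2\beta - t$ gives $|w'(t)|^{p-2}w'(t) = -(u'(2\beta-t))^{p-1}$. Performing the change of variables on both sides of the desired identity, it reduces to proving
\[
a^p \int_\alpha^\beta (u')^{p-1} \psi'(t)\, dt = \int_\alpha^\beta f(u)\, \psi(t)\, dt,
\]
where $\psi(t) := \varphi(t) + \varphi(2\beta - t)$. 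Note that $\psi \in C^\infty[\alpha,\beta]$ with $\psi(\alpha) = 0$, but $\psi(\beta) = 2\varphi(\beta)$ need not vanish.

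To prove this reduced identity, I would test the original equation for $u$ against $\psi\eta_n$, where $\eta_n$ is a smooth cutoff equal to $1$ on $[\alpha,\beta - 2/n]$, vanishing on $[\beta - 1/n,\beta]$, with $|\eta_n'| \leq Cn$. Extension by zero places $\psi\eta_n$ in $W_0^{1,p}(-T,T)$, and since $(u')^- \equiv 0$ on $(\alpha,\beta)$, the weak formulation of $-\Delta_p^{a,b} u = f(u)$ reads
\[
a^p \int_\alpha^\beta (u')^{p-1}(\psi\eta_n)'\, dt = \int_\alpha^\beta f(u)\,\psi\eta_n\, dt.
\]
The right-hand side converges to $\int_\alpha^\beta f(u)\psi\, dt$ by dominated convergence, and the principal piece $a^p\int_\alpha^\beta (u')^{p-1} \psi' \eta_n\, dt$ converges to $a^p\int_\alpha^\beta (u')^{p-1} \psi'\, dt$. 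The only delicate term is the cutoff correction $a^p \int_{\beta - 2/n}^{\beta - 1/n} (u')^{p-1}\psi\eta_n'\, dt$; using $u \in C^1$ and $u'(\beta) = 0$, the integrand $(u')^{p-1}$ is bounded by some $\varepsilon_n \to 0$ on the support of $\eta_n'$, while $|\eta_n'| \leq Cn$ on an interval of length $1/n$, so this contribution is $O(\varepsilon_n) \to 0$.

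The main obstacle is precisely this boundary correction at $t = \beta$, where the symmetrized test function $\psi$ does not vanish. The hypothesis $u'(\beta) = 0$ is exactly what is required to kill both the would-be boundary flux and the cutoff error in the limit, and thereby to patch the two halves of $w$ into a genuine weak solution of the pure $p$-Laplace equation on the doubled interval.
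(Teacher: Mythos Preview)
Your proof is correct and follows essentially the same strategy as the paper: approximate the (non-vanishing-at-$\beta$) test function by functions supported away from $\beta$, use $u'\geq 0$ on $(\alpha,\beta)$ to collapse $-\Delta_p^{a,b}$ to $-a^p\Delta_p$, and invoke $u'(\beta)=0$ to kill the cutoff error in the limit. The only cosmetic difference is organizational: you symmetrize the test function first into a single $\psi(t)=\varphi(t)+\varphi(2\beta-t)$ and apply one smooth cutoff $\eta_n$, whereas the paper handles the two halves of $(\alpha,2\beta-\alpha)$ with two separate piecewise-linear truncations $\phi_\varepsilon,\psi_\varepsilon$ and then recombines; the underlying mechanism is identical.
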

\begin{proof} Clearly, we have $w \in C^1[\alpha, 2 \beta - \alpha]$. Take any $\varphi \in W^{1,p}_{0}(\alpha, 2\beta - \alpha)$ and consider the following functions for every $\varepsilon > 0$:
\begin{equation*}
\phi_{\varepsilon}(t) =
\begin{cases}
\begin{aligned}
     & \varphi(t) \ \ \text{if} \ t \in (\alpha, \beta - \varepsilon), \\
     & -\frac{\varphi(\beta - \varepsilon)}{\varepsilon}(t - \beta) \ \ \text{if} \ t \in [\beta - \varepsilon, \beta), \\
     & 0 \ \ \text{if} \ t \in \mathbb{R} \setminus (\alpha, \beta),
\end{aligned}
\end{cases}
\end{equation*}
\begin{equation*}
\psi_{\varepsilon}(t) =
\begin{cases}
\begin{aligned}
     & \varphi(2\beta - t) \ \ \text{if} \ t \in (\alpha, \beta - \varepsilon), \\
     & -\frac{\varphi(\beta + \varepsilon)}{\varepsilon}(t-\beta) \ \ \text{if} \ t \in [\beta - \varepsilon, \beta), \\
     & 0 \ \ \text{if} \ t \in \mathbb{R} \setminus (\alpha, \beta).
\end{aligned}
\end{cases}
\end{equation*}
It is easy to check that $\phi_{\varepsilon},\psi_{\varepsilon} \in W^{1,p}_0(-L,L)$ and
\begin{align*}
	\lim_{\varepsilon \to 0^{+}} \phi_{\varepsilon}(t) + \psi_{\varepsilon}(2\beta - t) &= \varphi(t) \ \text{for every} \ t \in (\alpha, 2\beta - \alpha),\\
	\lim_{\varepsilon \to 0^{+}} \phi_{\varepsilon}'(t) - \psi_{\varepsilon}'(2\beta - t) &= \varphi'(t) \ \text{for almost every} \ t \in (\alpha, 2\beta - \alpha).
\end{align*}
So, by the dominated convergence theorem, we derive
\begin{equation}\label{limu+1}
\lim_{\varepsilon \to 0^{+}} \int_{\alpha}^{2\beta - \alpha}f(w)\left(\phi_{\varepsilon}(t) + \psi_{\varepsilon}(2\beta - t)\right)\, dt = \int_{\alpha}^{2\beta - \alpha}f(w)\varphi \, dt.
\end{equation}
Also, since $-\Delta_{p}^{a,b}u = -a^{p}\Delta_{p}u$ in $(\alpha,\beta)$, we have
\[
\int_{\alpha}^{\beta} a^{p}|w'|^{p-2} w'\phi_{\varepsilon}' \, dt = \int_{\alpha}^{\beta - \varepsilon} a^{p}|u'|^{p-2}u'\varphi' \, dt + \int_{\beta - \varepsilon}^{\beta} a^{p}|u'|^{p-2}u'\left(-\frac{\varphi(\beta - \varepsilon)}{\varepsilon}\right) \, dt
\]
and since $u'$ is continuous in $[\alpha, \beta]$ and $u'(\beta) = 0$, we also get
\[
\left|\int_{\beta - \varepsilon}^{\beta} a^{p}|u'|^{p-2}u'\left(-\frac{\varphi(\beta - \varepsilon)}{\varepsilon}\right) \, dt\right| \leq a^{p} \lVert \varphi\rVert_{\infty}\lVert u' \rVert_{L^{\infty}[\beta - \varepsilon,\beta]}^{p-1} \xrightarrow[\varepsilon \to 0^{+}]{} 0.
\]
Consequently,
\begin{equation}\label{limu+2}
\lim_{\varepsilon \to 0^{+}} \int_{\alpha}^{\beta} a^{p} |w'|^{p-2} w'\phi_{\varepsilon}' \, dt = \int_{\alpha}^{\beta} a^{p} |w'|^{p-2} w'\varphi' \, dt.
\end{equation}
Now notice that
\begin{align*}
	\int_{\beta}^{2\beta - \alpha} a^{p} &|w'(t)|^{p-2} w'(t) \left(-\psi_{\varepsilon}'(2\beta - t)\right) \, dt \\
	&= a^{p} \int_{\beta}^{2\beta - \alpha} |u'(2\beta - t)|^{p-2}(-u'(2\beta - t))\left(-\psi_{\varepsilon}'(2\beta - t)\right) \, dt \\
	&= a^{p}\int_{\alpha}^{\beta} |u'(t)|^{p-2} u'(t) \psi_{\varepsilon}'(t) \, dt \\
	&= a^{p}\int_{\alpha}^{\beta - \varepsilon} |u'(t)|^{p-2} u'(t)\varphi'(2\beta - t) \, dt + a^{p}\int_{\beta - \varepsilon}^{\beta} |u'(t)|^{p-2} u'(t) \left(-\frac{\varphi(\beta + \varepsilon)}{\varepsilon}\right) \, dt \\
	&=\int_{\beta}^{2\beta - \alpha} a^{p}|w'(t)|^{p-2} w'(t)\varphi'(t) \, dt + a^{p}\int_{\beta - \varepsilon}^{\beta} |u'(t)|^{p-2} u'(t)\left(-\frac{\varphi(\beta + \varepsilon)}{\varepsilon}\right) \, dt
\end{align*}
and, arguing as before, we have
\begin{equation}\label{limu+3}
\lim_{\varepsilon \to 0^{+}} \int_{\beta}^{2\beta - \alpha} a^{p} |w'(t)|^{p-2} w'(t)\left(-\psi_{\varepsilon}'(2\beta - t)\right) \, dt = \int_{\beta}^{2\beta - \alpha} a^{p} |w'|^{p-2} w'\varphi' \, dt.
\end{equation}
Using \eqref{limu+1}, \eqref{limu+2}, \eqref{limu+3} and the fact that $\phi_{\varepsilon}$ and $\psi_{\varepsilon}$ are test functions, we obtain
\begin{align*}
\int_{\alpha}^{2\beta - \alpha} f(w)\varphi \, dt &= \lim_{\varepsilon \to 0^{+}} \int_{\alpha}^{2\beta - \alpha} f(w) \left(\phi_{\varepsilon}(t) + \psi_{\varepsilon}(2\beta - t)\right) \, dt \\
&= \lim_{\varepsilon \to 0^{+}} \int_{\alpha}^{\beta}f(u)\phi_{\varepsilon} \, dt + \int_{\beta}^{2\beta - \alpha} f(u(2\beta - t))\psi_{\varepsilon}(2\beta - t) \, dt \\
&= \lim_{\varepsilon \to 0^{+}}\int_{\alpha}^{\beta} a^{p}|w'|^{p-2} w' \phi_{\varepsilon}' \, dt + \int_{\alpha}^{\beta} f(u) \psi_{\varepsilon} \, dt \\
&= \lim_{\varepsilon \to 0^{+}}\int_{\alpha}^{\beta} a^{p} |w'|^{p-2} w' \phi_{\varepsilon}' \, dt + \int_{\alpha}^{\beta} a^{p}|u'|^{p-2}u'\psi_{\varepsilon}' \, dt \\
&= \lim_{\varepsilon \to 0^{+}}\int_{\alpha}^{\beta} a^{p}|w'|^{p-2} w' \phi_{\varepsilon}' \, dt - \int_{\beta}^{2\beta - \alpha} a^{p} |w'(t)|^{p-2} w'(t) \psi_{\varepsilon}'(2\beta - t) \,dt \\
&= \int_{\alpha}^{\beta} a^{p}|w'|^{p-2} w'\varphi' \, dt + \int_{\beta}^{2\beta - \alpha} a^{p}|w'|^{p-2} w' \varphi' \, dt \\
&= \int_{\alpha}^{2\beta - \alpha} a^{p} |w'|^{p-2} w'\varphi' \, dt.
\end{align*}
\end{proof}
\begin{proof}[Proof of Theorem \ref{T2}] Let $a,b > 0$ be any fixed numbers. By direct minimization theory, there is $u \in W^{1,p}_{0}(-T,T)$ with $\lVert u \rVert_{p} = 1$ such that
\[
\int_{-T}^{T} a^p \left( u'(t)^+ \right)^p + b^p \left( u'(t)^- \right)^p dt = \lambda_{p}^{a,b}(-T,T).
\]
It is easy to see that $u$ is a weak solution of
\begin{equation}\label{pde}
-\left(a^{p}[(u')^{+}]^{p-1}-b^{p}[(u')^{-}]^{p-1}\right)' = \lambda_{p}^{a,b}(-T,T)|u|^{p-2}u\ \ \text{in}\ (-T,T).
\end{equation}
To be more precise, for any $\varphi \in W^{1,p}_{0}(-T,T)$, $u$ satisfies
\[
\int_{-T}^{T}\left(a^{p}[(u'(t))^{+}]^{p-1}-b^{p}[(u'(t))^{-}]^{p-1}\right)\varphi'(t) \ dt =  \lambda_{p}^{a,b}(-T,T)\int_{-T}^{T}|u(t)|^{p-2}u(t)\varphi(t) \ dt.
\]
Standard regularity theory ensures that $u \in C^1[-T,T]$. Notice also that $u$ has definite sign in $(-T,T)$. Otherwise, there is a number $\tilde{t} \in (-T, T)$ such that $u \in W^{1,p}_{0}(-T,\tilde{t})\cap W^{1,p}_{0}(\tilde{t},T)$. Proceeding exactly as we did in \textbf{the first step} of the proof of Theorem~\ref{T1}, we arrive at a contradiction.

Assume from now on that $u \geq 0$ (the case $u \leq 0$ is analogous). By Harnack's inequality (\cite{Tru}), we have $u > 0$ in $(-T,T)$. Since $u(-T) = 0$ and $u(T) = 0$, by Hopf's Lemma, we know that $u'(-T) >  0$ and $u'(T) < 0$. Let $t_0 := \min\{t \in [-T,T]:\, u'(t) = 0\}$ and $t_1 := \max\{t \in [-T,T]:\, u'(t) = 0\}$. Clearly, $-T < t_0 \leq t_1 < T$, $u'(t) > 0$ for every $t \in (-T,t_0)$ and $u'(t) < 0$ for every $t \in (t_1, T)$. We assert that $t_0 = t_1$. Assume by contradiction that $t_0 < t_1$ and consider the following functions
\begin{equation*}
w_1(t) =
\begin{cases}
\begin{aligned}
     & u(t)\ \ \text{if} \ t \in (-T,t_{0}), \\
     & u(2t_{0}-t)\ \ \text{if} \ t \in [t_{0},T+2t_{0}),
\end{aligned}
\end{cases}
\end{equation*}
\begin{equation*}
w_2(t) =
\begin{cases}
\begin{aligned}
     & u(2t_{1}-t)\ \ \text{if} \ t \in (2t_{1}-T,t_{1}), \\
     & u(t)\ \ \text{if} \ t \in [t_{1},T).
\end{aligned}
\end{cases}
\end{equation*}
Note that $w_1, w_2 > 0$ in $(-T,T)$ and that $w_1 \in W^{1,p}_{0}(-T,T+2t_{0})$ and $w_2 \in W^{1,p}_{0}(2t_{1}-T,T)$. Also, since $u$ is a weak solution of \eqref{pde} and $u'(t_0) = u'(t_1) = 0$, by Lemma \ref{L1}, each of them satisfies in the weak sense the respective equations in $(-T,T)$:
\[
-(|w_1'|^{p-2}w_1)' = \frac{\lambda_{p}^{a,b}(-T,T)}{a^{p}}|w_1|^{p-2}w_1
\]
and
\[
-(|w_2'|^{p-2}w_2)' = \frac{\lambda_{p}^{a,b}(-T,T)}{b^{p}}|w_2|^{p-2}w_2,
\]
which implies that $w_1$ is a principal eigenfunction of the $p$-Laplace operator associated to $\lambda_{1,p}(-T,T+2t_{0})$ and $w_2$ is one associated to $\lambda_{1,p}(2t_{1}-T,t_{1})$. Hence,
\[
\lambda_{1,p}(-T,T+2t_{0}) = \frac{\lambda_{p}^{a,b}(-T,T)}{a^{p}}
\]
and
\[
\lambda_{1,p}(2t_{1}-T,t_{1}) = \frac{\lambda_{p}^{a,b}(-T,T)}{b^{p}},
\]
which yields
\[
a^{p}\lambda_{1,p}(-T,T+2t_{0}) = b^{p}\lambda_{1,p}(2t_{1}-T,t_{1}),
\]
or equivalently
\begin{equation}\label{lambdaab1}
\frac{a^{p}}{(2(T+t_{0}))^{p}}\lambda_{1,p}(0,1) = \frac{b^{p}}{(2(T-t_{1}))^{p}}\lambda_{1,p}(0,1).
\end{equation}
Since we are assuming by contradiction that $t_0 < t_1$, the above equality leads us to $t_{1} > \frac{(a-b)}{(a+b)}T$ and thus
\begin{align} \label{lambdaab2}
\lambda_{p}^{a,b}(-T,T) 
	&=  \frac{b^{p}}{(2(T-t_{1}))^{p}}\lambda_{1,p}(0,1) \\
	&> \frac{b^{p}}{\left(2(T- \frac{(a-b)}{(a+b)}T\right))^{p}}\lambda_{1,p}(0,1) \\
	&= \left(\frac{a+b}{4T}\right)^{p}\lambda_{1,p}(0,1) \\
	&= \left(\frac{a+b}{2}\right)^{p}\lambda_{1,p}(-T,T). \nonumber
\end{align}

Now let $\varphi_{p} \in W_0^{1,p}(-T,T)$ be as in the statement and let $t^{*} = \left( \frac{a-b}{a+b}\right)T$. Define the positive function
\[
\tilde{u}(t) =
\begin{cases}
\begin{aligned}
     & \varphi_{p}\left(\frac{T(t-t^{*})}{T+t^{*}}\right)\ \ \text{if} \ t \in (-T,t^{*}), \\
     & \varphi_{p}\left(\frac{T(t-t^{*})}{T-t^{*}}\right)\ \ \text{if} \ t \in [t^{*},T).
\end{aligned}
\end{cases}
\]
It is clear that $\tilde{u} \in W^{1,p}_{0}(-T,T)$ and notice that
\begin{align*}
\int_{-T}^{T} |\tilde{u}|^{p} dt 
	&= \int_{-T}^{t^{*}} \left|\varphi_{p}\left(\frac{T(t-t^{*})}{T+t^{*}}\right)\right|^{p} dt + \int_{t^{*}}^{T} \left|\varphi_{p}\left(\frac{T(t-t^{*})}{T-t^{*}}\right)\right|^{p} dt \\
&= \frac{(T+t^{*})}{T}\int_{-T}^{0}|\varphi_{p}(t)|^{p} dt + \frac{(T-t^{*})}{T}\int_{0}^{T}|\varphi_{p}(t)|^{p} dt \\
	&= \frac{T+t^{*}}{2T}+\frac{T-t^{*}}{2T} \\
	&= 1
\end{align*}
and
\begin{align*}
	\int_{-T}^{T} a^{p} &[(\tilde{u}')^{+}]^{p} + b^{p} [(\tilde{u}')^{-}]^{p} dt \\
&= a^{p}\int_{-T}^{t^{*}}\frac{T^{p}}{(T+t^{*})^{p}}\left|\varphi_{p}'\left(\frac{T(t-t^{*})}{T+t^{*}}\right)\right|^{p} dt + b^{p}\int_{t^{*}}^{T}\frac{T^{p}}{(T-t^{*})^{p}}\left|\varphi_{p}'\left(\frac{T(t-t^{*})}{T-t^{*}}\right)\right|^{p} dt \\
&= a^{p}\frac{T^{p-1}}{(T+t^{*})^{p-1}}\int_{-T}^{0} |\varphi_{p}'(t)|^{p}\ dt + b^{p}\frac{T^{p-1}}{(T-t^{*})^{p-1}}\int_{0}^{T} |\varphi_{p}'(t)|^{p}\ dt \\
&= \frac{a^{p}}{\left(1+\frac{t^{*}}{T}\right)^{p-1}}\frac{\lambda_{1,p}(-T,T)}{2}+\frac{b^{p}}{\left(1-\frac{t^{*}}{T}\right)^{p-1}}\frac{\lambda_{1,p}(-T,T)}{2} \\
%
&=\frac{\lambda_{1,p}(-T,T)}{2}\left(\frac{a^{p}}{\left(1+\frac{a-b}{a+b}\right)^{p-1}}+\frac{b^{p}}{\left(1-\frac{a-b}{a+b}\right)^{p-1}}\right) \\
&=\frac{\lambda_{1,p}(-T,T)}{2}\left(\frac{a(a+b)^{p-1}}{2^{p-1}}+\frac{b(a+b)^{p-1}}{2^{p-1}}\right) = \frac{\lambda_{1,p}(-T,T)}{2^{p}}(a+b)(a+b)^{p-1} \\
&= \left(\frac{a+b}{2}\right)^{p}\lambda_{1,p}(-T,T).
\end{align*}
Consequently, $\lambda_{p}^{a,b}(-T,T) \leq \left(\frac{a+b}{2}\right)^{p}\lambda_{1,p}(-T,T)$, which is a contradiction with \eqref{lambdaab2}. Therefore, $t_{0} = t_{1}$ and, from \eqref{lambdaab1}, we have $t_{0} = \left( \frac{a-b}{a+b}\right)T$. Hence,
\begin{align*}
\lambda_{p}^{a,b}(-T,T)
	&= \frac{a^{p}}{\left(2\left(T+\frac{a-b}{a+b}T\right)\right)^{p}}\lambda_{1,p}(0,1)\\
	&= \frac{a^{p}}{\left(\frac{4a}{a+b}T\right)^{p}}\lambda_{1,p}(0,1) \\
	&= \left(\frac{a+b}{2}\right)^{p}\lambda_{1,p}(-T,T).
\end{align*}
Furthermore, using the strict convexity of the function
\[
t \in \R \mapsto a^p (t^+)^p + b^p (t^-)^p,
\]
it is easily checked (e.g. mimicking the proof of Theorem 3.1 in \cite{BFK}) that there is at most one normalized extremizer with the same sign. This implies that $\tilde{u}$ coincides with the desired function $u_{p}$ of the statement.
\end{proof}

\section{Fundamental frequencies for $n = 2$: the non-degenerate case}\label{sec3}

The goal of this section is to prove Theorem \ref{T2.1}.

It suffices to prove that $(a)$ implies $(b)$. Assume by contradiction that $(b)$ is false. Consider the asymmetric convex body ($H \in \mathscr{H}_P$)
\[
D_H = \left\{ (x,y) \in \R^2:\ H(x,y) \leq 1\right\}
\]
and its support function (or dual semi norm)
\begin{equation}
	\label{dualnorm}
H^\circ(x,y) = \max\{xz + yw:\ (z,w) \in D_H\},
\end{equation}
which is also in $\mathscr H_P$. The polar convex body of $D_H$ is defined by
\[
D^\circ_H = \left\{ (x,y) \in \R^2:\ H^\circ(x,y) \leq 1\right\}.
\]

Suppose by contradiction that $H$ is not differentiable at some point $(z,w) \neq (0,0)$.
By homogeneity we know that $H$ is not differentiable at any multiple of $(z,w)$.
Consider the subdifferential of $H$, defined by
\[
	\partial H(z,w) = \left\{(x,y) \in \R^2 \colon\ H(z,w) + x(z-a) + y(w-b) \leq H(a,b) \text{ for every } (a,b) \in \R^2 \right\}.
\]

By the convexity and homogeneity of $H$, $\partial H(z,w)$ is contained in a line not containing the origin, perpendicular to $(z,w)$.
More specifically, by \cite[Theorem 1.7.4]{Schneider} applied to $K = D_H^\circ$, the set $\partial H(z,w)$ can be characterized as the support set of $D_H^\circ$, this is,
\[
	\partial H(z,w) = \left\{(x,y) \in \R^2 \colon\ z x + w y = H(z,w) \ \text{and} \ H^{\circ}(x,y) = 1 \right\}.
\]
Now \cite[Lemma 1.5.14]{Schneider} implies that $\partial H(z,w)$ cannot be a single point, so it must be a non-degenerate segment which we call $S$.

Set $\Omega = D_{H}^{\circ}$ and take an eigenfunction $u_p \in W^{1,p}_0(\Omega)$ associated to $\lambda_{p}^{H}(\Omega)$.
Theorem 3.1 of \cite{BCG} in the special case explained in Example 3.4, with $n=2, K = \Omega$ and $A(t) = t^p$, implies that
\begin{equation}\label{ineqconvexsym}
\lambda_{p}^{H}(\Omega) = \iint_{\Omega} H^{p}(\nabla u_p)\, dA \geq \iint_{\Omega} H^{p}(\nabla u_p^{\star})\, dA
\end{equation}
and $\Vert u_p^{\star} \Vert_{p} = \Vert u_p \Vert_{p} = 1$, where $u_p^{\star}$ denotes the convex rearrangement of $u_p$ with respect to $H^\circ$, which also belongs to $W^{1,p}_0(\Omega)$. Then, \eqref{ineqconvexsym} is actually an equality.

Now we shall prove that $\nabla u(x,y)$ is parallel to $(z,w)$ for $(x,y)$ in a set of positive measure, and this shows that the composition $\nabla H( \nabla u(x,y))$ is not well-defined for $(x,y)$ in this set. Notice that we cannot guarantee at this point that $u = u_p^\star$.

By Theorem 3.2 and Example 3.4 of \cite{BCG} (again with $n=2, K = \Omega$ and $A(t) = t^p$), there exists a full-measure set $E \subseteq (\operatorname{essinf} u_p, \operatorname{esssup} u_p)$ such that for every $t \in E$, the level set $\{u \geq t\}$ is homothetic to $\Omega$ up to a set of measure zero, that is, $\{u \geq t\} = a_t \Omega + (x_t, y_t)$ for some positive and decreasing function $a_t$, and vector $(x_t, y_t) \in \R^2$.
Moreover, by \cite[equation (5.15)]{BCG}, for a.e. $t$ and $\mathcal H^1$ a.e. point $(x,y) \in \{u=t\}$, the gradient $\nabla u(x,y)$ is not zero, and $-\frac{\nabla u(x,y)}{|\nabla u(x,y)|}$ is an outer normal vector of $a_t \Omega + (x_t,y_t)$.
By further reducing the set $E$, we may assume that this is the case for all $t \in E$.

Consider the set
\begin{multline*}
	R = \left\{(x,y) \in \R^2: u(x,y) \in E, x z + y w \geq a z + b w \text{ for almost every } (a,b) \text{ such that } u(a,b) \geq u(x,y)\right\}.
\end{multline*}
By the definition of $R$, we know that for $t \in E$,
\begin{align*}
	R \cap \{u=t\}
	&= \{(x,y) \in \R^2: x z + y w \geq a z + b w \text{ for every } (a,b) \in a_t \Omega + (x_t, y_t) \} \\
	&= a_t \{(x,y) \in \R^2: x z + y w \geq a z + b w \text{ for every } (a,b) \in \Omega \} + (x_t, y_t) \\
	&= a_t S + (x_t, y_t).
\end{align*}
We deduce that, for a.e. $(x,y) \in R$, the gradient $\nabla u(x,y)$ is orthogonal to $S$, and thus parallel to $(z,w)$.

By the co-area formula and taking $\varepsilon > 0$ small,
\begin{align*}
	\int_R |\nabla u|
	&= \int_E \mathcal H^1(R \cap \{u = t\}) \\
	&\geq \int_{E \cap [\operatorname{essinf} u_p, \operatorname{esssup} u_p - \varepsilon]} a_t \mathcal H^1(S) \\
	&\geq (\operatorname{esssup} u_p - \operatorname{essinf} u_p - \varepsilon) \, a_{\operatorname{esssup} u_p - \varepsilon} \mathcal H^1(S) \\
	&>0.
\end{align*}
This implies that $R$ has positive measure, and the theorem follows.

\hfill	$\Box$

\section{Fundamental frequencies for $n = 2$: degenerate cases}\label{sec4}

This section is devoted to the proof of Theorems \ref{T3.1} and \ref{T4.1}, among other important results.

Recall that the kernel of an asymmetric anisotropy $H \in \mathscr{H}$ is defined by
\[
\ker(H):= \{(x,y) \in \R^2:\, H(x,y) = 0\}.
\]
Clearly, from the definition of $\mathscr{H}_D$, the closed set $\ker(H)$ is a positive convex cone in $\R^2$, so that the kernel of $H \in \mathscr{H}_D \setminus \{0\}$ can be classified into four category of sets:
\begin{enumerate}[label=(K\arabic*)]
\item\label{PropertyC1} $\ker(H) = \{\alpha e^{i \theta_0}:\, \alpha > 0\}$ for some $\theta_0 \in [0, 2 \pi]$ (a half-line);
\item\label{PropertyC2} $\ker(H) = \{\alpha e^{i \theta_1}:\, \alpha \in \R\}$ for some $\theta_1 \in [0, \pi]$ (a line);
\item\label{PropertyC3} $\ker(H) = \{\alpha e^{i \theta}:\, \alpha > 0,\, \theta \in [\theta_2, \theta_3]\}$ for some $\theta_2, \theta_3 \in [0, 2\pi]$ with $\theta_3 - \theta_2 < \pi$ (a sector);
\item\label{PropertyC4} $\ker(H) = \{\alpha e^{i \theta}:\, \alpha > 0,\, \theta \in [\theta_4, \theta_4 + \pi]\}$ for some $\theta_4 \in [0, \pi]$ (a half-plane).
\end{enumerate}

Some typical examples of functions in $\mathscr{H}_D$ are:
\begin{ex} \label{ex1}
Each example $(Ei)$ below is in correspondence with the above case $(Ki)$:
\begin{enumerate}[label=(E\arabic*)]
\item\label{ExamC1} $H(x,y) = a \left( \vert x \vert^p + \vert y^+ \vert^p \right)^{1/p}$ for a constant $a > 0$ and a parameter $p \geq 1$;
\item\label{ExamC2} $H(x,y) = a y^+ + b y^-$ for constants $a, b > 0$;
\item\label{ExamC3} $\blacktriangleright$ $H(x,y) = a \left( \kappa \vert x \vert + y \right)^+$ for constants $a, \kappa > 0$,\\
$\blacktriangleright$ $H(x,y) = a \left( \vert x^+ \vert^p + \vert y^+ \vert^p \right)^{1/p}$ for a constant $a > 0$ and a parameter $p \geq 1$;
\item\label{ExamC4} $H(x,y) = a y^+$ for a constant $a > 0$.
\end{enumerate}
\end{ex}

A natural question is whether it is possible to characterize analytically in some way all degenerate asymmetric anisotropies. A partial answer is given in the next result where it is exhibited all those corresponding to \ref{PropertyC2} and \ref{PropertyC4}.
\begin{propo} \label{P5}
Let $H \in \mathscr{H}_D \setminus \{0\}$ and set $H_A := H \circ A$ for a matrix $A$. There is a rotation matrix $A$ such that the anisotropy $H_A$ assumes one of the following forms:
\begin{itemize}
\item[(i)] $H_A(x,y) = a y^+ + b y^-$ for certain constants $a, b > 0$ provided that $\ker(H)$ is a line,
\item[(ii)] $H_A(x,y) = a y^+$ for some constant $a > 0$ provided that $\ker(H)$ is a half-plane.
\end{itemize}
\end{propo}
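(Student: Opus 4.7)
The plan is to normalize the kernel via a rotation and then exploit convexity together with the vanishing of $H$ on the boundary of $\ker(H)$. First I would observe that in both cases an appropriate rotation matrix $A$ puts $\ker(H_A)$, where $H_A := H \circ A$, into a canonical position: in case (i), $\ker(H_A) = \{(x,0) : x \in \R\}$ (the $x$-axis), and in case (ii), $\ker(H_A) = \{(x,y) : y \leq 0\}$ (the closed lower half-plane). This is an elementary $2$-dimensional fact, and $H_A$ remains in $\mathscr{H}$ since rotations preserve non-negativity, convexity and positive $1$-homogeneity.

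The analytical core, shared by both cases, is the following slicing trick. Since $H_A(s,0) = 0$ for every $s \in \R$, fixing $s, s' \in \R$ and $t \in (0,1)$ and applying convexity to the combination $t(s,1) + (1-t)(s',0)$ gives
\[
H_A\bigl(t s + (1-t) s', \, t\bigr) \leq t H_A(s,1) + (1-t) H_A(s',0) = t H_A(s,1).
\]
Dividing by $t$ using positive $1$-homogeneity yields
\[
H_A\!\left(s + \tfrac{1-t}{t} s', \, 1\right) \leq H_A(s,1).
\]
As $s'$ ranges over $\R$ (with $t$ fixed), the first argument ranges over all of $\R$, so $H_A(u,1) \leq H_A(s,1)$ for every $u, s \in \R$; exchanging $u$ and $s$ forces $s \mapsto H_A(s,1)$ to be constant.

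In case (ii) this immediately produces $H_A(x,y) = a y$ for $y > 0$ (with $a := H_A(0,1)$) and $H_A(x,y) = 0$ for $y \leq 0$, i.e. $H_A(x,y) = a y^+$; strict positivity $a > 0$ is forced by $H \neq 0$. In case (i) the same argument applied on the line $y = -1$ (again using that $H_A$ vanishes on $\{y = 0\}$) produces a constant $b := H_A(0,-1)$ such that $H_A(x,y) = -by = b y^-$ for $y<0$, whence the global formula $H_A(x,y) = a y^+ + b y^-$. Strict positivity $a, b > 0$ here follows because $\ker(H_A) = \{y=0\}$ is exactly a line, so neither $(0,1)$ nor $(0,-1)$ can lie in the kernel.

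I do not expect any serious obstacle. The convexity slicing is immediate and the rotation step is routine; the only detail to verify carefully is that any line through the origin (resp.\ any closed half-plane with the origin on its boundary) in $\R^2$ is rotation-equivalent to the $x$-axis (resp.\ to the lower half-plane), which is a purely two-dimensional elementary observation.
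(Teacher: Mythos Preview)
Your proposal is correct and follows essentially the same strategy as the paper: rotate to place $\ker(H_A)$ in canonical position, then use the vanishing of $H_A$ along the $x$-axis together with convexity to conclude that $H_A(x,y)$ depends only on $y$. The only cosmetic difference is that the paper invokes sub-additivity directly (writing $H_A(x,y) \leq H_A(x,0)+H_A(0,y)$ and $H_A(0,y) \leq H_A(-x,0)+H_A(x,y)$ to get $H_A(x,y)=H_A(0,y)$ in two lines), whereas you reach the same conclusion via a convex-combination slicing argument on the lines $y=\pm 1$.
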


\begin{proof}
Assume that $\ker(H)$ is a line or a half-plane. In each case, let $A$ be a rotation matrix such that $\ker(H_A) = \{(x,0):\, x \in \R\}$ or $\ker(H_A) = \{(x,y):\, x \in \R,\, y \leq 0\}$. Thanks to the sub-additivity of $H$ and the fact that $H(1,0) = 0 = H(-1,0)$, we have
\begin{align*}
	H_A(x,y) &\leq H_A(x,0) + H_A(0,y) = H_A(0,y),\\
	H_A(0,y) &\leq H_A(-x,0) + H_A(x,y) = H_A(x,y),
\end{align*}
so $H_A(x,y) = H_A(0,y)$ and the conclusion follows since $y \mapsto H_A(0,y)$ is an one-dimensional asymmetric anisotropy.
\end{proof}

The following lower estimate always holds for any non-zero asymmetric anisotropy:
\begin{propo} \label{P6}
Let $H \in \mathscr{H} \setminus \{0\}$. There are a rotation matrix $A$ such that $H_A(x,y) \geq \|H\| y^+$ for every $(x,y) \in \R^2$.
\end{propo}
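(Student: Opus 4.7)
The plan is to represent $H$ as a support function and exploit this structure to locate the maximizer of $H$ on the unit circle. Since $H$ is a finite, convex, positively $1$-homogeneous function on $\R^2$, it coincides with the support function of its subdifferential at the origin:
\[
	H(x,y) = \max_{w \in K_H}\, (x,y)\cdot w, \qquad K_H := \partial H(0,0),
\]
where $K_H$ is a non-empty compact convex subset of $\R^2$. Swapping the two maxima in the definition of $\|H\|$ gives
\[
	\|H\| = \max_{|v|=1} H(v) = \max_{|v|=1}\max_{w \in K_H} v\cdot w = \max_{w \in K_H} |w|,
\]
so there exists $w_0 \in K_H$ with $|w_0| = \|H\|$.

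Next, I choose the rotation matrix $A$ which sends $(0,1)$ to the unit vector $w_0/\|H\|$; equivalently $A^T w_0 = (0,\|H\|)$. A direct computation using the support function representation shows that $H_A$ is the support function of $A^T K_H$:
\[
	H_A(x,y) = H(A(x,y)) = \max_{w \in K_H}\, A(x,y)\cdot w = \max_{w' \in A^T K_H}\, (x,y)\cdot w'.
\]
Since $(0,\|H\|) = A^T w_0 \in A^T K_H$, selecting this particular element of $A^T K_H$ in the above maximum yields the linear lower bound
\[
	H_A(x,y) \geq (x,y)\cdot(0,\|H\|) = \|H\|\,y
\]
for every $(x,y) \in \R^2$.

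Finally, combining this estimate with the trivial bound $H_A(x,y) = H(A(x,y)) \geq 0$ gives
\[
	H_A(x,y) \geq \max\{\|H\|\,y,\, 0\} = \|H\|\,y^+,
\]
which is the desired inequality. The only substantive ingredient is the classical fact from convex analysis that every finite, convex, positively $1$-homogeneous function on $\R^2$ is the support function of a compact convex set; once this identification is made the argument is essentially a one-line observation, so I expect no real obstacle. An essentially equivalent but more direct formulation is to pick a unit vector $v_0$ realizing $H(v_0) = \|H\|$, verify via the $1$-homogeneity of $H$ that $\partial H(v_0) = \{\|H\|\,v_0\}$, and then rotate so that $v_0$ becomes $(0,1)$; this variant avoids explicit use of $K_H$ at the cost of a short subgradient computation.
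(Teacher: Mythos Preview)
Your proof is correct. Both you and the paper exploit the same convex-analytic fact, but from dual viewpoints: the paper picks a unit vector $(z,w)$ maximizing $H$ on the sphere, bounds the dual seminorm $H^\circ(z,w)\le \|H\|^{-1}$ via a support-line argument for the sublevel set $\{H\le\|H\|\}$, and then invokes the duality inequality $xz+yw\le H(x,y)H^\circ(z,w)$; you instead write $H$ directly as the support function of $K_H=\partial H(0,0)$, pick $w_0\in K_H$ of maximal norm, and read off the linear lower bound $H(x,y)\ge (x,y)\cdot w_0$ immediately. These are two sides of the same coin (your $w_0/\|H\|$ is a valid choice for the paper's $(z,w)$), but your route is a bit more streamlined since the support-function identity delivers the inequality in one step, whereas the paper has to argue separately that $H^\circ(z,w)$ is finite and suitably bounded. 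The variant you sketch at the end---start from $v_0$ with $H(v_0)=\|H\|$ and compute $\partial H(v_0)=\{\|H\|v_0\}$---is essentially the paper's starting point recast in subdifferential language.
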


\begin{proof}
Let $(z,w) \in \R^2$ with $\vert (z,w) \vert = 1$ such that $H(z,w) = \|H\|$. We show first that $H^\circ(z,w)$ is finite,  where $H^\circ$ is the dual seminorm defined by \eqref{dualnorm}. Since $(z,w)$ is a point in $\{H = \|H\|\}$ of minimal euclidean norm, the line orthogonal to $(z,w)$ and going through $(z,w)$ must be a support line of the convex set $\{ H \leq \|H\|\}$. This means that $xz + yw \leq 1$ whenever $H(x,y) \leq \|H\|$ and thus $H^\circ(z,w) \leq \|H\|$. By \eqref{dualnorm}, we have
\[
xz + yw \leq H(x,y) H^\circ(z,w)
\]
for every $(x,y), (z,w) \in \R^2$. In particular, if $xz + yw \geq 0$, we have
\[
 \|H\| \left( xz + yw \right)^{+} \leq (H^\circ(z,w))^{-1} \left( xz + yw \right)^{+} = (H^\circ(z,w))^{-1} \left( xz + yw \right) \leq H(x,y).
\]
Also, if $xz + yw < 0$, then $ \|H\|^{-1} \left( xz + yw \right)^{+} = 0 \leq H(x,y)$, finally take a rotation matrix $A$ such that $A^{T}(z,w)=(0,1)$, then
\[
H_{A}(x,y) \geq  \|H\| \left(\langle A(x,y), (z,w) \rangle \right)^{+} = \|H\| \left(\langle (x,y), A^{T}(z,w) \rangle \right)^{+} = \|H\| \left(\langle (x,y), (0,1) \rangle \right)^{+} = \|H\|y^{+}.
\]
\end{proof}

Let $p > 1$ and let $\Omega \subset \R^2$ be a bounded domain. For each rotation matrix $A$ and function $u \in W^{1,p}_{0}(\Omega)$, denote $\Omega_{A} = A^{T}(\Omega)$ and by $u_A$ the function given by $u_{A}(x,y) = u(A(x,y))$, where $A^T$ stands for the transposed matrix of $A$. One easily checks that $H_{A} \in \mathscr{H}$ (resp. $\mathscr{H}_P,\, \mathscr{H}_D$) and $u_A \in W^{1,p}_{0}(\Omega_A)$ for any $H \in \mathscr{H}$ (resp. $\mathscr{H}_P,\, \mathscr{H}_D$) and $u \in W^{1,p}_{0}(\Omega)$.

Some basic properties that follow directly from the definition of $\lambda_{1,p}^{H}(\Omega)$ are:
\begin{enumerate}[label=(P\arabic*)]
    \item\label{PropertyP1} If $H \in \mathscr{H}$ and $\alpha \in [0,+\infty)$ then $\alpha H \in \mathscr{H}$ and $\lambda_{p}^{\alpha H}(\Omega) = \alpha^p \lambda_{p}^{H}(\Omega)$ for any bounded domain $\Omega$;

    \item\label{PropertyP2} If $H \in \mathscr{H}$ and $\Omega_{1}$ and $\Omega_{2}$ are bounded domains such that $\Omega_{1} \subseteq \Omega_{2}$ then $\lambda_{p}^{H}(\Omega_{1}) \geq \lambda_{p}^{H}(\Omega_{2})$;

    \item\label{PropertyP3} If $G$ and $H$ are functions in $\mathscr{H}$ such that $G \leq H$, then $\lambda_{p}^{G}(\Omega) \leq \lambda_{p}^{H}(\Omega)$ for any bounded domain $\Omega$;

    \item\label{PropertyP4} If $H \in \mathscr{H}$ then $\lambda_{p}^{H}(\Omega) = \lambda_{p}^{H_A}(\Omega_A)$ for any orthogonal matrix $A$. Moreover, $u \in W^{1,p}_{0}(\Omega)$ is a minimizer of $\lambda_{p}^{H}(\Omega)$ if, and only if, $u_A \in W^{1,p}_{0}(\Omega_A)$ is a minimizer of $\lambda_{p}^{H_A}(\Omega_A)$.
\end{enumerate}

Using these properties along with Theorem \ref{T1} and Proposition \ref{P6}, we can extend the one-dimensional Corollary \ref{C1} to its counterpart in dimension $2$. Precisely, we have:
\begin{propo} \label{P7}
Let $p > 1$, let $\Omega \subset \R^2$ be a bounded domain and let $H \in \mathscr{H}$. The level $\lambda_{p}^{H}(\Omega)$ is positive if, and only if, $H \neq 0$.
\end{propo}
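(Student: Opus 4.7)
The plan is to reduce the two-dimensional statement to the one-dimensional sharp inequality (Theorem~\ref{T1}) via the pointwise lower bound from Proposition~\ref{P6} combined with a slicing argument. The reverse implication is immediate: if $H = 0$ then the energy $\mathcal{E}_p^H$ vanishes identically on $W^{1,p}_0(\Omega)$, so $\lambda_p^H(\Omega) = 0$. The real content is showing $H \neq 0 \Rightarrow \lambda_p^H(\Omega) > 0$.

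For the substantive direction, I would first apply Proposition~\ref{P6} to obtain a rotation matrix $A$ such that $H_A(x,y) \geq \|H\|\, y^+$ pointwise on $\R^2$. Chaining properties \ref{PropertyP4}, \ref{PropertyP3}, and \ref{PropertyP1} in that order yields
\[
\lambda_p^H(\Omega) \;=\; \lambda_p^{H_A}(\Omega_A) \;\geq\; \lambda_p^{\|H\| y^+}(\Omega_A) \;=\; \|H\|^p\, \lambda_p^{y^+}(\Omega_A),
\]
so the task becomes to show $\lambda_p^{y^+}(\Omega_A) > 0$, where $y^+$ denotes the asymmetric anisotropy $(x,y) \mapsto y^+$.

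Next, since $\Omega_A$ is bounded, enclose it in a rectangle $(-R,R) \times (-T,T)$ and use monotonicity \ref{PropertyP2} to reduce to proving $\lambda_p^{y^+}((-R,R)\times(-T,T)) > 0$. Here the anisotropy only sees the $y$-direction, so the inequality separates: by Fubini, for every $u \in W^{1,p}_0((-R,R)\times(-T,T))$ the slice $y \mapsto u(x,y)$ lies in $W^{1,p}_0(-T,T)$ for a.e. $x$, and applying Theorem~\ref{T1} slicewise gives
\[
\int_{-T}^{T} |u(x,y)|^p\, dy \;\leq\; \frac{2^p}{\lambda_{1,p}(-T,T)} \int_{-T}^{T} \bigl( u_y(x,y)^+\bigr)^p\, dy
\]
for a.e. $x$. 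Integrating in $x$ produces the required positive lower bound $\lambda_p^{y^+}((-R,R)\times(-T,T)) \geq \lambda_{1,p}(-T,T)/2^p$.

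There is no real obstacle: Proposition~\ref{P6} and Theorem~\ref{T1} do the heavy lifting, and the only point that requires a bit of care is the Fubini-style justification that slices of a function in $W^{1,p}_0$ on a rectangle belong to $W^{1,p}_0$ on the corresponding interval, which is a standard fact about Sobolev spaces on product domains.
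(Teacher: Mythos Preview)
Your proof is correct and follows essentially the same route as the paper: both invoke Proposition~\ref{P6} to reduce to the anisotropy $y^+$, enclose the domain in a box via \ref{PropertyP2}, and then apply the one-dimensional inequality of Theorem~\ref{T1} slicewise. The only cosmetic differences are that the paper uses a square rather than a rectangle and passes through $C^1_0$ functions by density rather than invoking the Sobolev slicing fact directly.
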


\begin{proof}
Let $H \in \mathscr{H}$ be any asymmetric anisotropy. For $H = 0$, we have $\lambda_{p}^{H}(\Omega) = 0$. For $H \neq 0$, by Proposition \ref{P6}, there exists a rotation matrix $A$ such that $H_A(x,y) \geq \|H\| y^+$ for every $(x,y) \in \R^2$. By Properties \ref{PropertyP1}, \ref{PropertyP3} and \ref{PropertyP4}, we can consider $H(x,y) = H_A(x,y) = y^+$ and $\Omega = \Omega_A$. Furthermore, by Property \ref{PropertyP2}, it suffices to show that $ \lambda_{p}^{H}(Q) > 0$ for any square $Q = (-T,T) \times (-T ,T)$ such that $\Omega \subset Q$.

On the other hand, by the strong Poincaré inequality stated in Theorem \ref{T1}, we obtain
\begin{align*}
\iint_{Q} H^p(\nabla u(x,y))\, dA 
	&= \int_{-T}^T \int_{-T}^T \left( D_y u(x,y)^+ \right)^p\, dy\, dx \\
	&\geq \int_{-T}^T \lambda^+_{p}(-T,T) \int_{-T}^T \vert u(x,y) \vert^p\, dy\, dx  \\
	&=\lambda^+_{p}(-T,T) \iint_{Q} \vert u(x,y) \vert^p\, dA
\end{align*}
for every $u \in C^1_0(Q)$, and so by density we derive $\lambda_{p}^{H}(Q) \geq \lambda^+_{p}(-T,T) > 0$.
\end{proof}

We now concentrate our attention on the proof of Theorems \ref{T3.1} and \ref{T4.1}.

Let $H \in \mathscr{H}_D \setminus \{0\}$ be such that $\ker(H)$ is a line or a half-plane. By Proposition \ref{P5}, in each case there is a rotation matrix $A$ such that $H_{A}(x,y) = a y^+ + b y^-$, where $a, b > 0$ when $\ker(H)$ is a line and $a > 0$ and $b = 0$ when $\ker(H)$ is a half-plane.

The following lemma on dimension reduction in PDEs is required in the proof of the mentioned theorems:
\begin{lema}\label{L2} Let $p > 1$, let $\Omega \subset \R^2$ be a bounded domain and let $H(x,y) = a y^+ + b y^-$ for constants $a > 0$ and $b \geq 0$. Consider the bounded open interval $I = \{x \in \R :\, \Omega_x \neq \emptyset\}$ where $\Omega_{x} = \left\{y \in \R :\, (x,y) \in \Omega\right\}$. If $u \in W^{1,p}_0(\Omega)$ is a weak solution of the equation
\begin{equation} \label{Dpde}
- \Delta_{p}^H u = \lambda^H_{p}(\Omega) \vert u \vert^{p-2}u \ \ \text{in} \ \ \Omega
\end{equation}
then, for almost every $x \in I$, the restriction
\[
u_{x}(y) = u(x,y),\ \forall y \in \Omega_{x},
\]
is a weak solution in $W^{1,p}_0(\Omega_x)$ of the one-dimensional equation
\[
- \Delta_{p}^H v = \lambda^H_{p}(\Omega) \vert v \vert^{p-2} v \ \ \text{in} \ \ \Omega_{x}.
\]
\end{lema}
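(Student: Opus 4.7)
The plan is to exploit the fact that $H(x,y)=ay^{+}+by^{-}$ depends only on the second variable, so the anisotropic flux $H^{p-1}(\nabla u)\nabla H(\nabla u)$ has vanishing $x$-component; the 2D equation then decouples into a family of 1D equations along the vertical slices, which I would extract by testing against products and invoking the DuBois--Reymond lemma.

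As a first step I would compute $\nabla H(\xi)=(0,\,a\mathbf{1}_{\xi_2>0}-b\mathbf{1}_{\xi_2<0})$ wherever defined, so that the weak formulation of \eqref{Dpde} reads
\[
\iint_{\Omega}\bigl[a^{p}(u_y^{+})^{p-1}-b^{p}(u_y^{-})^{p-1}\bigr]\,\varphi_y\,dA \;=\; \lambda^{H}_{p}(\Omega)\iint_{\Omega}|u|^{p-2}u\,\varphi\,dA
\]
for every $\varphi\in W^{1,p}_{0}(\Omega)$ --- note that no $\varphi_x$ term survives. The standard ACL characterization of Sobolev functions then gives, for a.e. $x\in I$, that $u_x(\cdot):=u(x,\cdot)$ extended by zero outside $\Omega_x$ belongs to $W^{1,p}(\mathbb{R})$ with distributional derivative $u_x'(y)=u_y(x,y)$ and support in $\overline{\Omega_x}$, hence $u_x\in W^{1,p}_{0}(\Omega_x)$.

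The core step is a product decomposition of test functions. For any open interval $J\Subset\mathbb{R}$, the set $U_J:=\{x\in I:\,J\subset\Omega_x\}$ is open in $I$ because $\Omega$ itself is open. Given $\eta\in C_c^{\infty}(U_J)$ and $\psi\in C_c^{\infty}(J)$, the product $\varphi(x,y):=\eta(x)\psi(y)$ has support contained in $U_J\times J\subset\Omega$ and therefore belongs to $W^{1,p}_{0}(\Omega)$. Substituting into the weak formulation and applying Fubini yields $\int_{U_J}\eta(x)\,F_{J,\psi}(x)\,dx=0$, where
\[
F_{J,\psi}(x):=\int_{J}\bigl[a^{p}((u_x')^{+})^{p-1}-b^{p}((u_x')^{-})^{p-1}\bigr]\psi'(y)\,dy \;-\; \lambda^{H}_{p}(\Omega)\int_{J}|u_x|^{p-2}u_x\,\psi(y)\,dy.
\]
A H\"older/Fubini estimate shows $F_{J,\psi}\in L^{1}_{\mathrm{loc}}(U_J)$; since $\eta\in C_c^{\infty}(U_J)$ is arbitrary, the DuBois--Reymond lemma forces $F_{J,\psi}(x)=0$ for a.e. $x\in U_J$.

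Finally I would run a countable-basis argument: fix a countable family $\{J_k\}$ of open intervals with rational endpoints and, for each $k$, a countable family $\{\psi_{k,\ell}\}_\ell\subset C_c^{\infty}(J_k)$ dense in the $W^{1,p}$-topology. The union of all exceptional null sets, together with the null set off which $u_x\in W^{1,p}_0(\Omega_x)$, defines a single null set $N\subset I$ outside which every identity $F_{J_k,\psi_{k,\ell}}(x)=0$ holds simultaneously. For $x\notin N$, any $\psi\in C_c^{\infty}(\Omega_x)$ is supported in a finite union of the $J_k$'s contained in $\Omega_x$ and can be $W^{1,p}$-approximated by finite sums of $\psi_{k,\ell}$'s; continuity of $\psi\mapsto F_{J,\psi}(x)$ in $W^{1,p}$ (thanks to $u_x'\in L^p$) then delivers the 1D weak formulation for $u_x$. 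The main obstacle is precisely this decoupling step, since the slice $\Omega_x$ can be disconnected and vary irregularly with $x$; the rational-rectangle device $U_J\times J\subset\Omega$ combined with the countable-basis trick is what makes the product ansatz reach every admissible 1D test function.
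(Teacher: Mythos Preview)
Your approach is correct and more direct than the paper's, with one small slip: the set $U_J=\{x:J\subset\Omega_x\}$ need not be open when $J$ is an open interval (e.g.\ take $\Omega$ the open triangle with vertices $(0,0),(\pm 1,1)$ and $J=(0,\tfrac12)$; then $\Omega_x=(|x|,1)$ and $U_J=\{0\}$). Replace the defining condition by $\overline{J}\subset\Omega_x$ and openness follows from compactness of $\overline{J}$; nothing downstream changes since your $\psi$'s have compact support in $J$ and in the countable-basis step you can always squeeze a rational $J_k$ with $\overline{J_k}\subset\Omega_x$ around $\operatorname{supp}\psi$.

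The paper takes a two-stage route instead. It first assumes $\partial\Omega$ is of class $C^1$, so that each connected component of $\Omega_x$ is an interval $(c_x,d_x)$ with endpoints piecewise $C^1$ in $x$, and tests against $\phi(x,y)=\eta(x)\,\Gamma_x(\varphi)(y)$, where $\Gamma_x:C_0^\infty(0,1)\to C_0^\infty(c_x,d_x)$ is the affine rescaling; thus the $y$-factor genuinely depends on $x$ through the moving endpoints, and the $C^1$ hypothesis is what makes $\phi\in C^1_0(\Omega)$. DuBois--Reymond in $x$ then yields the 1D equation on each slice. For a general bounded domain the paper exhausts $\Omega$ by an increasing sequence of $C^1$ subdomains $\Omega_j$ and passes to the limit. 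Your straight-product ansatz $\eta(x)\psi(y)$ with $\psi$ independent of $x$, combined with the rational-rectangle device $\widetilde U_J\times J\subset\Omega$, bypasses both the boundary-regularity reduction and the exhaustion step entirely; the price is the explicit countable-basis bookkeeping at the end, which the paper also needs but leaves implicit.
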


\begin{proof} Let $u \in W^{1,p}_0(\Omega)$ be a weak solution of \eqref{Dpde}. By Theorem 10.35 of \cite{Le}, we have $u_x \in W^{1,p}_0(\Omega_x)$ for almost every $x \in I$. In particular, $\vert u \vert^{p-2}u \in L^1(\Omega)$ and $\vert u_x \vert^{p-2}u_x \in L^1(\Omega_x)$ for almost every $x \in I$. The proof that the restriction $u_x$ is a weak solution will be organized into two cases.

\n \textbf{First case:} Assume first that $\partial \Omega$ is of $C^1$ class.

For each $x \in I$, the set $\Omega_{x}$ is a finite union of open intervals, so it is enough to prove for the case that $\Omega_{x}$ is an interval to be denoted by $(c_x,d_x)$. Using that $\Omega$ has $C^1$ boundary, the functions
\begin{equation} \label{functionstrip}
x \mapsto c_x\ \ {\rm and}\ \ x \mapsto d_x
\end{equation}
are piecewise $C^1$ in $I$. For each $x \in I$, consider the map
\begin{align*}
\Gamma_x \colon & C^{\infty}_{0}(0,1) \longrightarrow C^{\infty}_{0}\left(\Omega_x\right) \\
& \varphi \longmapsto \varphi\left(\frac{\mathord{\cdot}-c_x}{d_x - c_x}\right).
\end{align*}
Note that $\Gamma_x$ is a bijection for every $x \in I$. Take now any $\varphi \in C^{\infty}_{0}(0,1)$ and choose $x_{0} \in I$ and $\delta_{x_{0}} > 0$, so that the functions in \eqref{functionstrip} are $C^1$ in $(x_{0}-\delta_{x_{0}}, x_{0}+\delta_{x_{0}})$ and in addition
\[
d_x - c_x \geq \frac{d_{x_{0}}-c_{x_{0}}}{2} > 0,\ \ \forall x \in (x_{0}-\delta_{x_{0}}, x_{0}+\delta_{x_{0}}).
\]
Let any $\eta \in C^{\infty}_{0}(x_{0}-\delta_{x_{0}},x_{0}+\delta_{x_{0}})$ and define the function $\phi(x,y) = \eta(x) \Gamma_{x}(\varphi(y))$. Notice that $\phi \in C^{1}_{0}(\Omega)$ and so taking $\phi$ as a test function in \eqref{Dpde}, we have
\[
\int_{x_{0}-\delta_{x_{0}}}^{x_{0}+\delta_{x_{0}}} \int_{\Omega_{x}} G(D_y u(x,y)) \frac{\eta(x) \Gamma_{x}(\varphi^\prime(y))}{d_{x} - c_{x}} dy dx = \lambda^H_{p}(\Omega) \int_{x_{0}-\delta_{x_{0}}}^{x_{0}+\delta_{x_{0}}} \int_{\Omega_{x}} \vert u(x,y) \vert^{p-2} u(x,y) \eta(x) \Gamma_{x}(\varphi(y)) dy dx,
\]
where $G(t) = \left(a^{p}[t^{+}]^{p-1}-b^{p}[t^{-}]^{p-1}\right)$ for $t \in \R$.

On the other hand, this equality can be rewritten as
\[
\int_{x_{0}-\delta_{x_{0}}}^{x_{0}+\delta_{x_{0}}}\left[G(D_y u(x,y)) \frac{\Gamma_{x}(\varphi^\prime(y))}{d_{x} - c_{x}}\, dy - \lambda^H_{p}(\Omega) \int_{\Omega_{x}}\vert u(x,y) \vert^{p-2} u(x,y) \Gamma_{x}(\varphi(y)) \, dy\right] \eta(x)\, dx = 0
\]
for all $\eta \in C^{\infty}_{0}(x_{0}-\delta_{x_{0}},x_{0}+\delta_{x_{0}})$. Since the function
\[
x \mapsto \int_{\Omega_{x}} G(D_y u(x,y)) \frac{\Gamma_{x}(\varphi^\prime(y))}{d_{x} - c_{x}}\, dy - \lambda^H_{p}(\Omega) \int_{\Omega_{x}}\vert u(x,y) \vert^{p-2} u(x,y) \Gamma_{x}(\varphi(y)) \, dy
\]
belongs to $L^{1}_{loc}(x_{0} - \delta_{x_{0}}, x_{0}+\delta_{x_{0}})$, we must have
\[
\int_{\Omega_{x}}G(D_y u(x,y)) \frac{ \Gamma_{x}(\varphi^\prime(y))}{d_{x}-c_{x}}\, dy - \lambda^H_{p}(\Omega) \int_{\Omega_{x}}\vert u(x,y) \vert^{p-2} u(x,y) \Gamma_{x}(\varphi(y)) \, dy = 0
\]
for almost every $x$ in $(x_{0} - \delta_{x_{0}}, x_{0}+\delta_{x_{0}})$.

Consequently, since $\Gamma_x$ is a bijection, the function $u_{x}(y) = u(x,y)$ satisfies
\[
\int_{\Omega_{x}} G(u_{x}^\prime) \psi^\prime\, dy = \lambda^H_{p}(\Omega) \int_{\Omega_{x}} \vert u_{x} \vert^{p-2} u_{x} \psi\, dy
\]
for every $\psi \in C^{\infty}_{0}(\Omega_{x})$ for almost every $x$ in $(x_{0} - \delta_{x_{0}}, x_{0}+\delta_{x_{0}})$. Since we can cover $I$, up to a countable set, by an enumerate union of intervals like $(x_{k} - \delta_{x_{k}}, x_{k} + \delta_{x_{k}})$, the above equality occurs in the general case where $\Omega_{x}$ can be disconnected, and it just says that $u_x$ satisfies in the weak sense
\[
- \Delta_{p}^H u_x = \lambda^H_{p}(\Omega) \vert u_x \vert^{p-2}u_x \ \ \text{in} \ \ \Omega_{x}.
\]

\n \textbf{Second case:} Consider now $\Omega$ an arbitrary bounded domain.

One always can write
\[
\Omega = \bigcup_{j=1}^{\infty} \Omega_{j},
\]
where $\Omega_{j}$ are bounded domains with $C^1$ boundary and $\Omega_{j} \subseteq \Omega_{j+1}$ for all $j \in \N$. If $\psi \in C^{\infty}_{0}(\Omega_{x})$, then there is $j_{0} \in \N$ such that the support of $\psi$ is contained in $(\Omega_{j})_{x}$ for all $j \geq j_{0}$. Thus, from the first part, $u_{x} = u(x,y)$ satisfies
\[
\int_{(\Omega_{j})_{x}} G(u_{x}^\prime) \psi^\prime\, dy = \lambda^H_{p}(\Omega) \int_{(\Omega_{j})_{x}} \vert u_{x} \vert^{p-2} u_{x} \psi\, dy
\]
for almost every $x$ in $I$ (module a countable union of countable sets) and all $j \geq j_{0}$. Taking $j \to \infty$ in the above equality, we derive
\[
\int_{\Omega_{x}} G(u_{x}^\prime) \psi^\prime\, dy = \lambda^H_{p}(\Omega) \int_{\Omega_{x}} \vert u_{x} \vert^{p-2} u_{x} \psi\, dy
\]
almost everywhere for $x \in I$, which proves the lemma.
\end{proof}

\begin{proof}[Proof of Theorems \ref{T3.1} and \ref{T4.1}] Let $\Omega \subset \R^2$ be a bounded domain and let $H \in \mathscr{H}_D$ be an asymmetric anisotropy such that $\ker(H)$ is a line or a half-plane. By Proposition \ref{P5}, there is a rotation matrix $A$ such that $H_A(x,y)  = a y^+ + b y^-$ for some constants $a > 0$ and $b \geq 0$. For convenience, by Property \ref{PropertyP4}, we assume that $H = H_{A}$ and $\Omega = \Omega_{A}$. From the $C^1$ regularity of $H^p$ in $\R \times (\R \setminus \{0\})$, it follows that $\lambda^H_{p}(\Omega)$ is attained in $W^{1,p}_0(\Omega)$ if and only if it is a fundamental frequency.

We first consider the case $b = 0$. Let $\Omega_{x}$ and $I \subset \R$ be as in the previous lemma. Assume by contradiction that $\lambda^H_{p}(\Omega)$ is a fundamental frequency, i.e. \eqref{Dpde} admits a nontrivial weak solution $u \in W^{1,p}_0(\Omega)$. Since $u$ is non-zero, by Lemma \ref{L2}, $u_{x_0} \in W^{1,p}_0(\Omega_{x_0})$, where $u_{x_0}(y) = u(x_0,y)$, is a nontrivial weak solution of
\[
- \Delta_{p}^H u_{x_0} = \lambda^H_{p}(\Omega) \vert u_{x_0} \vert^{p-2}u_{x_0} \ \ \text{in} \ \ \tilde{\Omega}_{x_0}
\]
for some $x_0 \in I$ and some connected component $\tilde{\Omega}_{x_0}$ of $\Omega_{x_0}$. Thus, $\lambda^{a,0}_{p}(\tilde{\Omega}_{x_0}) \leq \lambda^H_{p}(\Omega)$. Since $u_{x_0} \in W^{1,p}_0(\Omega_{x_0})$, by Theorem \ref{T1}, we have $\lambda^{a,0}_{p}(\tilde{\Omega}_{x_0}) < \lambda^H_{p}(\Omega)$. By that same theorem, we can take an interval $[r,s] \subset \tilde{\Omega}_{x_0}$ such that $\lambda^{a,0}_{p}(r,s) < \lambda^H_{p}(\Omega)$. By definition, there is a function $\psi \in W^{1,p}_0(r,s)$ with $\Vert \psi \Vert_p = 1$ such that
\[
\int_{r}^{s} a^p \left( \psi'(y)^+ \right)^p dy < \lambda^H_{p}(\Omega).
\]
Choose now $c < d$ so that $R:= (c,d) \times (r,s) \subset \Omega$ and any positive function $\varphi \in W^{1,p}_0(c,d)$ with $\Vert \varphi \Vert_p = 1$. Then, the function $u(x,y):= \varphi(x) \psi(y)$ for $(x,y) \in R$ belongs to $W^{1,p}_0(R) \subset W^{1,p}_0(\Omega)$ and
\[
\int_\Omega \vert u \vert^p\, dA = \int_R \vert u(x,y) \vert^p\, dA = \left( \int_c^d \varphi(x)^p\, dx \right) \left( \int_r^s \vert \psi(y) \vert^p\, dy \right) = 1,
\]
\[
\int_\Omega H^p(\nabla u)\, dA = \int_R a^p \left( D_y u(x,y)^+\right)^p\, dA = \left( \int_c^d \varphi(x)^p\, dx \right) \left( \int_{r}^{s} a^p \left( \psi'(y)^+ \right)^p\, dy \right) < \lambda^H_{p}(\Omega),
\]
which contradicts the definition of $\lambda^H_{p}(\Omega)$. This proves Theorem \ref{T3.1}.

We now focus on the characterization in the case $b > 0$ given in Theorem \ref{T4.1}.

Assume first that $\lambda^H_{p}(\Omega)$ is a fundamental frequency. Let $I$ be the bounded open interval as above, so (i) in (b) is clearly satisfied. Since $a, b > 0$, Theorem \ref{T2} guarantees the existence of a number $L > 0$ such that $\lambda^{a,b}_{p}(0,L) = \lambda^H_{p}(\Omega)$. For these choices of $I$ and $L$, we show (ii) in (b) by contradiction. Assume that there is a point $x_{0} \in I$ such that $\Omega_{x_0}$ contains a connected component $\tilde{\Omega}_{x_0}$ whose measure is greater than $L$. In this case, we can take a rectangle $R_1 := (c, d) \times (r, s) \subset \Omega$ with $L_1 := s - r > L$. Take also a principal eigenfunction $u_p \in W^{1,p}_0(r,s)$ corresponding to $\lambda^{a,b}_{p}\left( 0,L_1 \right)$ such that $\Vert u_p \Vert_p = 1$ and any positive function $\varphi \in W^{1,p}_0(c, d)$ such that $\Vert \varphi \Vert_p = 1$. Define $u(x,y) := \varphi(x) u_p(y)$ for $(x,y) \in R_1$. It is clear that $u \in W_0^{1,p}(R_1) \subset W_0^{1,p}(\Omega)$ and $\Vert u \Vert_p = 1$. Furthermore,
\begin{align*}
\iint_{\Omega} H^p(\nabla u)\, dA 
	&= \iint_R a^p \left( D_y u(x,y)^+\right)^p + b^p \left( D_y u(x,y)^-\right)^p dy dx \\
	&= \left( \int_{c}^{d} \vert \varphi(x) \vert^p\, dx \right) \left( \int_{r}^{s} a^p \left(u^\prime_p(y)^+\right)^p + \, b^p \left(u^\prime_p(y)^-\right)^p dy \right) \\
	&= \int_{r}^{s} a^p \left(u^\prime_p(y)^+\right)^p + \, b^p \left(u^\prime_p(y)^-\right)^p dy \\
	&= \lambda^{a,b}_{p}(0, L_1) \int_{r}^{s} \vert u_p(y) \vert^p\, dy \\
	&= \lambda^{a,b}_{p}(0, L_1).
\end{align*}
This leads to the contradiction $\lambda^H_{p}(\Omega) \leq \lambda^{a,b}_{p}(0, L_1) < \lambda^{a,b}_{p}(0, L)$.

For the proof of (iii) in (b), we use the assumption that $\lambda^H_{p}(\Omega)$ is a fundamental frequency. Let $u \in W^{1,p}_0(\Omega)$ be a corresponding eigenfunction. By Lemma \ref{L2}, for almost every $x \in I$, $u_{x} \in W^{1,p}_0(\Omega_{x})$ is a weak solution of
\[
- \Delta_{p}^H u_{x} = \lambda^H_{p}(\Omega) \vert u_{x} \vert^{p-2}u_{x} \ \ \text{in} \ \ \Omega_{x}.
\]
In particular, by elliptic regularity, we have $u(x,\cdot) \in C^1(\bar{\Omega}_x)$.

We assert that $u(x,\cdot) = 0$, $u(x,\cdot) > 0$ or $u(x,\cdot) < 0$ in each connected component $\tilde{\Omega}_x$ of $\Omega_x$. Indeed, if $u(x,\cdot)$ is non-zero in some connected component $\tilde{\Omega}_x$, then $\lambda^{a,b}_{p}(\tilde{\Omega}_x) \leq \lambda^H_{p}(\Omega)$. On the other hand, from the previous argument, we already know that strict inequality cannot occur. Therefore, $\lambda^{a,b}_{p}(\tilde{\Omega}_x) = \lambda^H_{p}(\Omega)$ and the claim follows from Theorem \ref{T2}.

We now prove that there is an open subinterval $I^\prime \subset I$ such that either $u(x,\cdot) > 0$ or $u(x,\cdot) < 0$ in some component $\tilde{\Omega}_x$ of $\Omega_x$, and so $\lambda^{a,b}_{p}(\tilde{\Omega}_x) = \lambda^H_{p}(\Omega)$ for every $x \in I^\prime$. Since $u$ is non-zero somewhere in $\Omega$, there is a subset $X \subset I$ with $\vert X \vert > 0$ (i.e. positive Lebesgue measure) such that $u(x,\cdot)$ has sign in some connected component $\tilde{\Omega}_x$ of $\Omega_x$ for every $x \in X$. Similarly to the definition of $I$, let $J$ be the bounded open interval such that $\Omega \subset \R \times J$ and $\Omega^y := \{x \in \R: \, (x,y) \in \Omega\} \neq \emptyset$ for every $y \in J$. We know that $u(\cdot,y) \in W^{1,p}_0(\Omega^y)$ and so, by Morrey's Theorem, $u(\cdot,y) \in C(\bar{\Omega}^y)$ for almost every $y \in J$.

Let $x_0 \in X$ and take $y_0 \in \tilde{\Omega}_{x_0}$ so that $u(\cdot,y_0) \in C(\bar{\Omega}^{y_0})$. Assume $u(x_0,y_0) > 0$ (the negative case is analogous). By continuity, there is a number $\delta > 0$ such that $u(x,y_0) > 0$ for every $x \in I^\prime:= (x_0 - \delta, x_0 + \delta)$. Consequently, $u(x,\cdot) > 0$ in $\tilde{\Omega}_x$ (here $\tilde{\Omega}_x$ is the connected component of $\Omega_x$ such that $(x,y_{0})$ belongs to it), and so $\lambda^{a,b}_{p}(\tilde{\Omega}_x) = \lambda^H_{p}(\Omega) = \lambda^{a,b}_{p}\left(0,L \right)$ for every $x \in I^\prime$, which is equivalent to the equality $\vert \tilde{\Omega}_x \vert = L$  for every $x \in I^\prime$. Furthermore, the $C^{0,1}$ regularity of $\partial \Omega$ implies that $\Omega^\prime:= \{ (x,y) \in \R^2:\, x \in I^\prime,\, y \in \tilde{\Omega}_x \}$ is a $C^{0,1}$ sub-domain of $\Omega$. Hence, the assertion (iii) holds for the number $L$ as defined above.

Conversely, let $\Omega \subset \R^2$ be a bounded domain with $C^{0,1}$ boundary and let $H \in \mathscr{H}_D \setminus \{0\}$ be an anisotropy satisfying the conditions (i), (ii) and (iii) and whose kernel is a line. Let $A$ be a rotation matrix such that $H_A(x,y) = a y^+ + b y^-$ with $a, b > 0$. By Property \ref{PropertyP4}, it suffices to construct a minimizer for $\lambda^{H_A}_{p}(\Omega_A)$ in $W_0^{1,p}(\Omega_A)$.

Again set $\Omega = \Omega_A$ and $H = H_A$. Let $I^\prime$ and $I$ be as in the item (b) of statement and let $\Omega_x$ be as above. Given any $u \in W_0^{1,p}(\Omega)$ with $\Vert u \Vert_p = 1$, since $u(x, \cdot) \in W_0^{1,p}(\Omega_x)$ for almost every $x \in I$, using (ii) and the one-dimensional asymmetric Poincaré inequality, we get
\begin{align*}
\iint_\Omega H^p(\nabla u)\, dA 
	&= \int_{I} \int_{\Omega_x} a^p \left( D_y u(x,y)^+\right)^p + b^p \left( D_y u(x,y)^-\right)^p dy dx \\
	&\geq \int_{I} \lambda^{a,b}_{p}\left(0,L \right) \int_{\Omega_x} \vert u(x,y) \vert^p\, dy dx \\
	&= \lambda^{a,b}_{p}\left(0,L \right) \iint_\Omega \vert u \vert^p\, dA \\
	&= \lambda^{a,b}_{p}\left(0,L \right),
\end{align*}
so that $\lambda^H_{p}(\Omega) \geq \lambda^{a,b}_{p}\left(0,L \right)$.

We now construct a function $u_0 \in W_0^{1,p}(\Omega)$ such that $\Vert u_0 \Vert_p = 1$ and
\[
\iint_\Omega H^p(\nabla u_0)\, dA = \lambda^{a,b}_{p}\left(0,L \right).
\]
Since the boundary of $\Omega$ is of $C^{0,1}$ class and (iii) is satisfied, there are an open interval $I_0 \subset I^\prime$ and a Lipschitz function $g : \bar{I}_0 \rightarrow \R$ such that $\Omega_0:= \{(x,y):\, x \in I_0,\, g(x) < y < g(x) + L\} \subset \Omega^\prime \subset \Omega$. Let $u_p \in W_0^{1,p}(0,L)$ be an eigenfunction corresponding to $\lambda^{a,b}_{p}\left(0,L \right)$ with $\Vert u_p \Vert_p = 1$ and take any positive function $\varphi \in W_0^{1,p}(I_0)$ with $\Vert \varphi \Vert_p = 1$. Define
\begin{equation*}
u_0(x,y) =
\begin{cases}
\begin{aligned}
    &\varphi(x) u_p(y - g(x))\ \ \text{if} \ (x,y) \in \Omega_0, \\
    & 0\ \ \text{if} \ (x,y) \in \Omega\setminus\Omega_{0}.
\end{aligned}
\end{cases}
\end{equation*}
Then, $u_0 \in W_0^{1,p}(\Omega_0) \subset W_0^{1,p}(\Omega)$, since $g$ is Lipschitz, and satisfies
\begin{align*}
\iint_\Omega \vert u_0 \vert^p\, dA 
	&= \iint_{\Omega_0} \vert u_0 \vert^p\, dA \\
	&= \int_{I_0} \int_{g(x)}^{g(x) + L} \varphi(x)^p \vert u_p(y - g(x)) \vert^p\, dy dx \\
	&= \int_{I_0} \int_0^L \varphi(x)^p \vert u_p(y) \vert^p\, dy dx \\
	&= \left( \int_{I_0} \varphi(x)^p\, dx \right) \left( \int_0^L \vert u_p(y) \vert^p\, dy \right) \\
	&= 1,
\end{align*}

\begin{align*}
\iint_\Omega H^p(\nabla u_0)\, dA 
	&= \int_{I_0} \int_{g(x)}^{g(x) + L} \varphi(x)^p \left( a^p \left( u_p^\prime(y - g(x))^+ \right)^p + b^p \left( u_p^\prime(y - g(x))^- \right)^p \right)\, dy dx \\
	&= \int_{I_0} \int_0^L \varphi(x)^p \left( a^p \left( u_p^\prime(y)^+ \right)^p + b^p \left( u_p^\prime(y)^- \right)^p \right)\, dy dx \\
	&= \left( \int_{I_0} \vert \varphi(x) \vert^p\, dx \right) \left( \int_0^L a^p \left( u_p^\prime(y)^+ \right)^p + b^p \left( u_p^\prime(y)^- \right)^p\, dy \right)\\
	&= \lambda^{a,b}_{p}\left(0,L \right) \int_0^L \vert u_p(y) \vert^p\, dy \\
	&= \lambda^{a,b}_{p}\left(0,L \right).
\end{align*}
Therefore, $\lambda^H_{p}(\Omega) = \lambda^{a,b}_{p}\left(0,L \right)$ and $u_0$ is a minimizer to $\lambda^H_{p}(\Omega)$, which is an eigenfunction. This concludes the proof.
\end{proof}

\section{Extended maximization conjecture for $n = 2$}\label{ssec: Maximization conjecture}\label{sec5}

The well-known isoperimetric maximization conjecture for the $p$-Laplace in dimension $n=2$ states that
\[
\sup \{\lambda_{1,p}(\Omega):\, \forall\ \text{bounded domain\ } \Omega \subset \R^2\ {\rm with}\ \vert \Omega \vert = 1\} = \infty.
\]
It is motivated by its validity for $p = 2$ which follows from the knowledge of fundamental frequencies given by
\[
\lambda_{1,2}(\Omega) = \pi^2 \left(\frac{1}{a^2} +  \frac{1}{b^2}\right)
\]
for rectangles under the form $\Omega = (0,a) \times (0,b)$, being computed by Rayleigh in the century XIX.
The above conjecture has recently been proved in dimension $2$ within the context of general norms, see \cite{HM2}.

Next we establish the following extension of the conjecture to any non-zero two-dimensional asymmetric anisotropy and any $p > 1$:
\begin{teor}[isoperimetric maximization] \label{T6}
Let $p > 1$. For any $H \in  \mathscr{H} \setminus\{0\}$, we have
\[
\sup \{\lambda^H_p(\Omega):\, \forall\ \text{bounded domain\ } \Omega \subset \R^2\ {\rm with}\ \vert \Omega \vert = 1\} = \infty.
\]
\end{teor}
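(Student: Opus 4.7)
The idea is to reduce to the degenerate half-space anisotropy $y^+$ via Proposition \ref{P6}, and then exhibit a concrete family of thin rectangles with unit area whose frequency blows up. The engine driving the blow-up is the one-dimensional strong Poincaré inequality from Theorem \ref{T1}.

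\textbf{Reduction step.} Fix $H \in \mathscr{H}\setminus\{0\}$. By Proposition \ref{P6} there is a rotation $A$ such that $H_A(x,y) \geq \|H\| y^+$ for all $(x,y) \in \R^2$. For any bounded domain $\Omega \subset \R^2$, property \ref{PropertyP4} gives $\lambda_p^H(\Omega) = \lambda_p^{H_A}(\Omega_A)$, and properties \ref{PropertyP1} and \ref{PropertyP3} combined yield
\[
\lambda_p^H(\Omega) = \lambda_p^{H_A}(\Omega_A) \;\geq\; \|H\|^p \,\lambda_p^{G}(\Omega_A),
\]
where $G(x,y) := y^+$. Since $A$ is an isometry, $|\Omega_A| = |\Omega|$, and $\Omega \mapsto \Omega_A$ is a bijection on the family of unit-area bounded domains. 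Hence it suffices to prove the theorem for the single anisotropy $G(x,y) = y^+$.

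\textbf{Thin rectangle family.} For each small $b > 0$, set $R_b := (0,1/b) \times (0,b)$, so $|R_b| = 1$. By Fubini and the one-dimensional asymmetric Poincaré inequality \eqref{SPI}, applied to $u(x,\cdot) \in W_0^{1,p}(0,b)$ for a.e.\ $x$,
\[
\iint_{R_b} G^p(\nabla u)\, dA \;=\; \int_0^{1/b}\!\!\int_0^b \bigl(D_y u(x,y)^+\bigr)^p\, dy\, dx \;\geq\; \lambda_p^+(0,b)\, \iint_{R_b} |u|^p\, dA
\]
for every $u \in C_0^1(R_b)$, hence by density for every $u \in W_0^{1,p}(R_b)$. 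By Theorem \ref{T1}(i) and the scaling identity $\lambda_{1,p}(0,b) = b^{-p}\lambda_{1,p}(0,1)$, we obtain
\[
\lambda_p^G(R_b) \;\geq\; \lambda_p^+(0,b) \;=\; \frac{\lambda_{1,p}(0,1)}{(2b)^p} \;\xrightarrow[b \to 0^+]{}\; \infty.
\]
Combining with the reduction step gives $\lambda_p^H(\Omega) \to \infty$ along a sequence of unit-area domains, which proves the theorem.

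\textbf{Obstacles.} There is essentially no analytic obstacle once the one-dimensional strong Poincaré inequality is in hand; all the work has been front-loaded into Section~\ref{sec2} (the constant $\tfrac{1}{2^p}\lambda_{1,p}$ in Theorem \ref{T1}) and into Proposition \ref{P6} (the fact that any non-zero asymmetric seminorm dominates a rotated half-space seminorm). The only point requiring mild care is the density argument passing from $C_0^1(R_b)$ to $W_0^{1,p}(R_b)$ to justify the Fubini application for Sobolev functions, which is routine.
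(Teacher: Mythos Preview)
Your proof is correct and follows essentially the same route as the paper: reduce via Proposition~\ref{P6} to the half-space anisotropy $y^+$, then use thin unit-area rectangles whose one-dimensional frequency blows up by Theorem~\ref{T1}. The only difference is cosmetic: the paper packages the Fubini/slicing lower bound into Proposition~\ref{P8} (which gives the exact value $\lambda_p^{H_0}(R_k)=\lambda_p^{a,0}(0,1/k)$), whereas you inline the same argument to obtain only the inequality $\lambda_p^G(R_b)\geq\lambda_p^+(0,b)$ --- which is all that is needed here and makes your version slightly more self-contained.
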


The main ingredient in the proof of this result deals with the explicit value of $\lambda^H_p(\Omega)$ for certain degenerate asymmetric anisotropies. Precisely, take $H \in  \mathscr{H} \setminus \{0\}$ so that $\ker(H)$ is a line or a half-plane. As is already known, there is a rotation matrix $A$ such that $H_{A}(x,y) = a y^+ + b y^-$, where $a > 0$ and $b \geq 0$.
\begin{propo} \label{P8}
Let $p > 1$, let $\Omega \subset \R^2$ be any bounded domain and let $H \in \mathscr{H}_D\setminus\{0\}$ and $A$ be as above. Then, we have
\[
\lambda^H_p(\Omega) = \lambda^{a,b}_p(0,L_{\Omega_A}),
\]
where
\[
L_\Omega = \sup \{ \vert J \vert:\,\ \text{for open intervals} \ J \subset \Omega_{x} \ \text{over all} \ x \in I\}.
\]
\end{propo}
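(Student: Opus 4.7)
My plan is to bracket $\lambda^H_p(\Omega)$ between two matching one-dimensional quantities, via vertical slicing for the lower bound and a separated product test function on a thin rectangle for the upper bound. By Property~\ref{PropertyP4} I first replace $(\Omega, H)$ with $(\Omega_A, H_A)$ and work from the start with $H(x,y) = a y^+ + b y^-$, $a > 0$, $b \geq 0$; write $L = L_\Omega$, and note that $H$ depends only on the second component of $\nabla u$.

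For the lower bound $\lambda^H_p(\Omega) \geq \lambda^{a,b}_p(0,L)$, take any $u \in W^{1,p}_0(\Omega)$ with $\|u\|_p = 1$ and slice vertically as in Lemma~\ref{L2}: by Theorem~10.35 of \cite{Le}, $u(x,\cdot) \in W^{1,p}_0(\Omega_x)$ for a.e.\ $x \in I$. Each connected component of $\Omega_x$ is an open interval of length at most $L$, so applying the sharp one-dimensional asymmetric Poincar\'e inequality from Theorem~\ref{T1} (when $b=0$) or Theorem~\ref{T2} (when $b>0$) on each component, combined with the translation invariance and the scaling $\lambda^{a,b}_p(0,\ell) = \ell^{-p}\lambda^{a,b}_p(0,1)$---which makes $\ell \mapsto \lambda^{a,b}_p(0,\ell)$ strictly decreasing---yields
\[
\int_{\Omega_x} \bigl[ a^p ((D_y u)^+)^p + b^p ((D_y u)^-)^p \bigr] dy \;\geq\; \lambda^{a,b}_p(0,L) \int_{\Omega_x} |u(x,y)|^p\, dy.
\]
Integrating in $x$ via Fubini concludes this direction.

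For the reverse inequality, fix $\epsilon > 0$. By definition of $L_\Omega$, some open vertical interval $J \subset \Omega_{x_0}$ satisfies $|J| > L - \epsilon$. Since every compact subset of $\{x_0\}\times J$ lies in the open set $\Omega$, I extract an axis-aligned open rectangle $R = (c,d) \times (r, r+L^*) \subset \Omega$ with $L^* > L - 2\epsilon$. On $R$ I build a separated test function $u(x,y) = \varphi(x)\psi(y-r)$, where $\varphi \in W^{1,p}_0(c,d)$ is positive with $\|\varphi\|_p = 1$ and $\psi \in W^{1,p}_0(0,L^*)$ is an $\epsilon$-near minimizer for $\lambda^{a,b}_p(0,L^*)$ with $\|\psi\|_p = 1$ (an honest minimizer exists when $b>0$ by Theorem~\ref{T2}, while for $b=0$ one merely invokes the definition of the infimum in Theorem~\ref{T1}). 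Because $H$ ignores the $x$-component of the gradient, the Dirichlet-type energy factorizes as $\int_c^d \varphi^p\, dx \cdot \int_0^{L^*} [a^p ((\psi')^+)^p + b^p ((\psi')^-)^p]\, dy \leq \lambda^{a,b}_p(0, L^*) + \epsilon$, giving $\lambda^H_p(\Omega) \leq \lambda^{a,b}_p(0, L - 2\epsilon) + \epsilon$ by the monotonicity. Sending $\epsilon \to 0^+$ and invoking the continuity of $\ell \mapsto \ell^{-p}\lambda^{a,b}_p(0,1)$ closes the argument. The only (mild) obstacle is the degenerate case $b=0$, where the one-dimensional minimizer escapes $W^{1,p}_0$; working with infima rather than minima throughout sidesteps this completely.
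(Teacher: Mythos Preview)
Your proof is correct and follows the same overall scheme as the paper: reduce to $H(x,y)=ay^++by^-$ via Property~\ref{PropertyP4}, establish the lower bound by vertical slicing together with the one-dimensional inequalities of Theorems~\ref{T1} and~\ref{T2}, and obtain the upper bound via a separated product test function on a thin axis-aligned rectangle approaching the maximal vertical width. The paper packages the upper bound by first invoking Theorem~\ref{T4.1} to identify $\lambda^H_p$ on rectangles and then using domain monotonicity~\ref{PropertyP2}, but the underlying construction is identical to yours.

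The one genuine difference is the treatment of the degenerate case $b=0$. The paper first proves the identity for all $b>0$ and then passes to the limit $b\to 0^+$ via the continuity result Lemma~\ref{lemacontinuity} (together with Theorem~\ref{T1.1} to identify the limit on the one-dimensional side). You instead handle $b=0$ directly by working with $\epsilon$-near minimizers in $W^{1,p}_0(0,L^*)$, exploiting that the definition of $\lambda^{a,0}_p$ is an infimum over $W^{1,p}_0$ even though the extremizers lie outside this space. Your route is more elementary and self-contained---it avoids the auxiliary continuity lemma entirely---while the paper's route has the advantage of isolating Lemma~\ref{lemacontinuity} as a reusable tool (it is invoked again in the proof of Theorem~\ref{T6.1}).
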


The proof of this result makes use of Theorems \ref{T1.1}, \ref{T2.1} and \ref{T4.1} and also the following lemma which will be useful in the next section.
\begin{lema}\label{lemacontinuity} Let $H_0 \in \mathscr{H}_D$ and $H_\varepsilon \in \mathscr{H}$ be anisotropies such that $H_\varepsilon \geq H_0$ for every $\varepsilon > 0$ and $H_\varepsilon \rightarrow H_0$ in $\mathscr{H}$. Then, for any $p > 1$ and any bounded domain $\Omega \subset \R^2$, we have
\[
\lim_{\varepsilon \to 0^{+}} \lambda_{p}^{H_{\varepsilon}}(\Omega) = \lambda_{p}^{H_{0}}(\Omega).
\]
\end{lema}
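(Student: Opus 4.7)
The plan is to prove the two inequalities $\liminf_{\varepsilon\to 0^+} \lambda_p^{H_\varepsilon}(\Omega) \geq \lambda_p^{H_0}(\Omega)$ and $\limsup_{\varepsilon\to 0^+} \lambda_p^{H_\varepsilon}(\Omega) \leq \lambda_p^{H_0}(\Omega)$ separately. The first one is essentially for free: the hypothesis $H_\varepsilon \geq H_0$ together with the monotonicity Property \ref{PropertyP3} yields $\lambda_p^{H_\varepsilon}(\Omega) \geq \lambda_p^{H_0}(\Omega)$ for every $\varepsilon > 0$, which passes to the liminf. Note that we do not require $\lambda_p^{H_0}(\Omega)$ to be attained, so the (possibly degenerate) nature of $H_0$ causes no trouble here.

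For the $\limsup$ inequality, I would proceed by testing against an almost-minimizer for $\lambda_p^{H_0}(\Omega)$. Fix $\delta > 0$ and choose $u_\delta \in W_0^{1,p}(\Omega)$ with $\|u_\delta\|_p = 1$ and $\int_\Omega H_0^p(\nabla u_\delta)\, dA < \lambda_p^{H_0}(\Omega) + \delta$. The plan is then to show that $\int_\Omega H_\varepsilon^p(\nabla u_\delta)\, dA \to \int_\Omega H_0^p(\nabla u_\delta)\, dA$ as $\varepsilon \to 0^+$, which yields $\lambda_p^{H_\varepsilon}(\Omega) \leq \int_\Omega H_\varepsilon^p(\nabla u_\delta)\, dA < \lambda_p^{H_0}(\Omega) + 2\delta$ for $\varepsilon$ sufficiently small, and letting $\delta \to 0$ concludes.

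To justify the passage to the limit, I would invoke the dominated convergence theorem. The convergence $H_\varepsilon \to H_0$ in $\mathscr{H}$ is understood as uniform convergence on the unit circle $S^1$, and by positive $1$-homogeneity this upgrades to $|H_\varepsilon(v) - H_0(v)| \leq \|H_\varepsilon - H_0\|_{L^\infty(S^1)} |v|$ for all $v \in \R^2$. Consequently $H_\varepsilon(\nabla u_\delta) \to H_0(\nabla u_\delta)$ pointwise a.e. in $\Omega$, and for $\varepsilon$ small we have $H_\varepsilon \leq C$ on $S^1$ uniformly in $\varepsilon$, so $H_\varepsilon^p(\nabla u_\delta) \leq C^p |\nabla u_\delta|^p$, an $L^1(\Omega)$ dominating function. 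Dominated convergence then gives the desired convergence of the integrals.

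There is no substantial obstacle: the argument reduces to monotonicity of the functional in $H$ combined with a routine application of dominated convergence enabled by the Lipschitz bound that $1$-homogeneity provides. The only mild subtlety, worth mentioning explicitly, is that one never needs to extract limits of near-minimizers $u_\delta$ as $\varepsilon \to 0$; they are chosen once for fixed $\delta$, so the possible lack of an extremizer for $\lambda_p^{H_0}(\Omega)$ (which is perfectly possible when $H_0 \in \mathscr{H}_D$, by Theorems \ref{T3.1} and \ref{T4.1}) is sidestepped entirely.
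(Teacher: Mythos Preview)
Your proof is correct and follows essentially the same approach as the paper: monotonicity via Property \ref{PropertyP3} for the $\liminf$, and testing the $H_\varepsilon$-functional against a fixed near-minimizer $u_\delta$ of $\lambda_p^{H_0}(\Omega)$ for the $\limsup$, using the Lipschitz bound $|H_\varepsilon - H_0| \leq \|H_\varepsilon - H_0\|\,|\cdot|$ coming from $1$-homogeneity. The only cosmetic difference is that the paper passes to the limit by the elementary inequality $a^p - b^p \leq p a^{p-1}(a-b)$ for $a \geq b \geq 0$ to get an explicit $O(\varepsilon)$ bound, whereas you invoke dominated convergence; both are equally valid here.
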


\begin{proof} First, since $H_{\varepsilon} \geq H_{0}$ for every $\varepsilon > 0$, we have
\[
\liminf_{\varepsilon \to 0^{+}} \lambda_{p}^{H_{\varepsilon}}(\Omega) \geq \lambda_{p}^{H_{0}}(\Omega).
\]
On the other hand, for each $\delta > 0$, let $u_{\delta} \in W^{1,p}_{0}(\Omega)$ with $\lVert u_{\delta} \rVert_{p} = 1$ be such that
\[
\iint_{\Omega} H_{0}^{p}(\nabla u_{\delta})\, dA < \lambda_{p}^{H_{0}}(\Omega) + \delta.
\]
Assuming without loss of generality that
\[
\vert H_\varepsilon(x,y) - H_0(x,y) \vert \leq \varepsilon \vert (x,y) \vert
\]
for every $(x,y) \in \R^2$ and $\varepsilon > 0$, we get
\begin{align*}
\lambda_{p}^{H_{\varepsilon}}(\Omega) - \lambda_{p}^{H_{0}}(\Omega) &\leq \iint_{\Omega}H_{\varepsilon}^{p}(\nabla u_{\delta}) - H_{0}^{p}(\nabla u_{\delta}) \, dA + \delta \\
&\leq p\iint_{\Omega}H_{\varepsilon}^{p-1}(\nabla u_{\delta})\left(H_{\varepsilon}(\nabla u_{\delta})-H_{0}(\nabla u_{\delta})\right) \, dA + \delta \\
&\leq p\iint_{\Omega}H_{\varepsilon}^{p-1}(\nabla u_{\delta})\left(\varepsilon|\nabla u_{\delta}| +H_{0}(\nabla u_{\delta}) - H_{0}(\nabla u_{\delta})\right) \, dA + \delta \\
&= p\varepsilon\iint_{\Omega}H_{\varepsilon}^{p-1}(\nabla u_{\delta})|\nabla u_{\delta}| \, dA + \delta \\
&= p\varepsilon\iint_{\Omega} \left( H_0(\nabla u_{\delta}) + \varepsilon \vert \nabla u_{\delta} \vert \right)^{p-1} |\nabla u_{\delta}| \, dA + \delta
\end{align*}
for $\delta > 0$ fixed and every $\varepsilon > 0$. Hence, letting $\varepsilon \rightarrow 0$, we derive
\[
\limsup_{\varepsilon \to 0^{+}} \lambda_{p}^{H_{\varepsilon}}(\Omega) \leq \lambda_{p}^{H_{0}}(\Omega) + \delta
\]
for every $\delta > 0$. Therefore,
\[
\lambda_{p}^{H_{0}}(\Omega) \leq \liminf_{\varepsilon \to 0^{+}} \lambda_{p}^{H_{\varepsilon}}(\Omega) \leq \limsup_{\varepsilon \to 0^{+}} \lambda_{p}^{H_{\varepsilon}}(\Omega) \leq \lambda_{p}^{H_{0}}(\Omega)
\]
and the conclusion follows.
\end{proof}

\begin{proof}[Proof of Proposition \ref{P8}]
Assume first that $H(x,y) = a y^+ + b y^-$ with $a, b > 0$. By the item (ii) of Theorem \ref{T4.1}, the statement follows directly when $\Omega$ is a rectangle whose sides are parallel to the coordinate axes, that is, $\lambda^H_{p}(\Omega) = \lambda^{a,b}_{p}(0,L_{\Omega})$ for such particular cases of $H$ and $\Omega$ and any $b > 0$.

Now let $\Omega \subset \R^2$ be any bounded domain and let $u \in W_0^{1,p}(\Omega)$. Since $u(x, \cdot) \in W_0^{1,p}(\Omega_x)$ for almost every $x \in I$, we have
\begin{align*}
\iint_\Omega H^p(\nabla u)\, dA 
	&= \int_I \int_{\Omega_x}H^p(\nabla u(x,y))\, dy dx \\
	&= \int_I \int_{\Omega_x} a^p \left( D_y u(x,y)^+\right)^p + b^p \left( D_y u(x,y)^-\right)^p  dy dx \\
	&\geq \int_I \lambda^{a,b}_{p}(0,L_{\Omega}) \int_{\Omega_x} \vert u(x,y) \vert^p  dy dx\\
	&= \lambda^{a,b}_{p}(0,L_{\Omega}) \iint_\Omega \vert u \vert^p\,  dA,
\end{align*}
so that $\lambda^H_{p}(\Omega) \geq \lambda^{a,b}_{p}(0,L_{\Omega})$.

On the other hand, taking a sequence of open rectangles $R_k = (a_k, b_k) \times (c_k, d_k) \subset \Omega$ such that $L_k = d_k - c_k \rightarrow L_\Omega$ as $k \rightarrow \infty$, by Property \ref{PropertyP2} and the previous remark, we obtain
\[
\lambda^H_{p}(\Omega) \leq \lambda^H_{p}(R_k) = \lambda^{a,b}_{p}(0,L_k),
\]
and letting $k \rightarrow \infty$, we derive
\[
\lambda^H_{p}(\Omega) \leq \lambda^{a,b}_{p}(0,L_\Omega).
\]
This concludes the proof for $H(x,y) = a y^+ + b y^-$ with $a, b > 0$ and any bounded domain $\Omega \subset \R^2$. Now letting $b \rightarrow 0$, thanks to Lemma \ref{lemacontinuity} and Theorems \ref{T1.1} and \ref{T2.1}, the same conclusion also occurs for $b = 0$.

Finally, when both $H$ is a degenerate asymmetric anisotropy and $A$ is a matrix as in the statement, by Property \ref{PropertyP4}, we have
\[
\lambda_{p}^{H}(\Omega) = \lambda_{p}^{H_A}(\Omega_A) = \lambda^{a,b}_{p}(0,L_{\Omega_A})
\]
for every $a > 0$, $b \geq 0$ and any bounded domain $\Omega \subset \R^2$.
\end{proof}

\begin{proof}[Proof of Theorem \ref{T6}]
Let any $H \in \mathscr{H} \setminus \{0\}$. By Proposition \ref{P6}, there always exist a rotation matrix $A$ and a constant $a > 0$ such that $H_A(x,y) \geq H_0(x,y) := a y^+$. Consider the open rectangle $\Omega_k = A(R_k)$, where $R_k = (0, k) \times (0, 1/k)$ for each integer $k \geq 1$. Clearly, $\vert \Omega_k \vert = 1$ and, by Properties \ref{PropertyP3} and \ref{PropertyP4} and Proposition \ref{P8}, we get
\[
\lambda_{p}^H(\Omega_k) = \lambda_{p}^{H_A}(R_k) \geq \lambda_{p}^{H_0}(R_k) = \lambda^{a,0}_{p}(0, 1/k).
\]
Letting $k \rightarrow \infty$ in the above inequality, by Theorem \ref{T1.1}, we deduce that $\lambda_{1,p}^H(\Omega_k) \rightarrow \infty$ as wished.
\end{proof}

\section{Anisotropic spectral optimization for $n = 2$}\label{ssec: Spectral optimization}\label{sec6}

This section is dedicated to the proof of Theorems \ref{T5.1} and \ref{T6.1}.

Consider the anisotropic sphere
\[
\s(\mathscr{H}) = \{H \in \mathscr{H}: \Vert H \Vert = 1\}.
\]
We recall that the optimal constants in \eqref{SSE1} are given by
\begin{align*}
	\Lambda^{\max}(\Omega) &= \sup\left\{\lambda_1(-\Delta_p^H):\, -\Delta_p^H \in {\cal E}(\Omega)\ \ {\rm and}\ \ H \in \s(\mathscr{H}) \right\}, \\
	\Lambda^{\min}(\Omega) &= \inf\left\{\lambda_1(-\Delta_p^H):\, -\Delta_p^H \in {\cal E}(\Omega)\ \ {\rm and}\ \ H \in \s(\mathscr{H}) \right\}.
\end{align*}

\begin{proof}[Proof of Theorem \ref{T5.1}]
Let $N(x,y) = \vert (x,y) \vert$. The computation of $\Lambda_{p}^{\max}(\Omega)$ is immediate, since $H \leq N$ in $\R^2$ for every $H \in \s(\mathscr{H})$, $N \in \s(\mathscr{H})$ and $\lambda^N_{p}(\Omega) = \lambda_{1,p}(\Omega)$. In particular, multiples of the $p$-Laplace operator $- \Delta_p$ on $\Omega$ are extremizers for the second inequality in \eqref{SSE1}. Moreover, we claim that these ones are unique provided that all boundary points of $\Omega$ satisfy the Wiener condition. It suffices to show the rigidity of the equality on $\s(\mathscr{H})$.

Let any $H \in \s(\mathscr{H})$ with $H \neq N$. Next we prove that
\[
\lambda^H_{p}(\Omega) < \lambda^{N}_{p}(\Omega).
\]
Consider the positive eigenfunction $\varphi_p \in W^{1,p}_0(\Omega)$ corresponding to $\lambda_{1,p}(\Omega)$ such that $\Vert \varphi_p \Vert_p = 1$. From the interior elliptic regularity theory for the $p$-Laplace operator (\cite{To1}), we have $\varphi_p \in C^1(\Omega)$ and, by assumption, since we are assuming that $\partial \Omega$ satisfies the Wiener condition, the boundary elliptic regularity theory gives $\varphi_p \in C(\overline{\Omega})$ (\cite{KM, LM, Ma}). Using that $\varphi_p = 0$ on $\partial \Omega$, we then show that $B_\varepsilon(0,0) \subset \nabla \varphi_p(\Omega)$ for every $\varepsilon > 0$ small enough. In fact, set $D = {\rm diam}(\Omega)$ and $M = \max_{(x,y) \in \overline{\Omega}} \varphi_p(x,y) > 0$, and take $\varepsilon = D^{-1} M$. Given any $(\eta, \xi) \in B_\varepsilon(0,0)$, consider the function
\[
v(x,y) := \eta x + \xi y - \varphi_p(x,y)\ \ {\rm for}\ \ (x,y) \in \overline{\Omega}.
\]
By continuity and compactness, $v$ admits a global minimum point $(\tilde{x}, \tilde{y}) \in \overline{\Omega}$. We claim that $(\tilde{x}, \tilde{y}) \in \Omega$. Otherwise, let $(\bar{x}, \bar{y}) \in \Omega$ be a global maximum point of $\varphi_p$ in $\overline{\Omega}$. Using the Cauchy-Schwarz inequality, we derive the contradiction
\[
v(\bar{x}, \bar{y}) - v(\tilde{x}, \tilde{y}) = \eta (\bar{x} - \tilde{x}) + \xi (\bar{y} - \tilde{y}) - M \leq \vert (\eta, \xi) \vert\, \vert (\bar{x} - \tilde{x}, \bar{y} - \tilde{y}) \vert - M < \varepsilon D - M = 0.
\]
Therefore, $(\tilde{x}, \tilde{y}) \in \Omega$ and so $\nabla \varphi_p(\tilde{x}, \tilde{y}) = (\eta, \xi)$.

Consequently, for some point $(x_0, y_0) \in \Omega$, we have
\[
H(\nabla \varphi_p(x_0,y_0)) < N(\nabla \varphi_p(x_0,y_0)) = \vert \nabla \varphi_p(x_0,y_0) \vert.
\]
Thus, $H(\nabla \varphi_p) <  \vert \nabla \varphi_p \vert$ in the set $\Omega_\delta := B_\delta(x_0,y_0)$ for some $\delta > 0$ small enough. Using this strict inequality in $\Omega_\delta$ and $H(\nabla \varphi_p) \leq  \vert \nabla \varphi_p \vert$ in $\Omega \setminus \Omega_\delta$, we get
\begin{align*}
\lambda^H_{p}(\Omega) 
	&\leq \iint_\Omega H^p(\nabla \varphi_p)\, dA \\
	&= \iint_{\Omega_\delta} H^p(\nabla \varphi_p)\, dA + \iint_{\Omega \setminus \Omega_\delta} H^p(\nabla \varphi_p)\, dA \\
	&< \iint_\Omega \vert \nabla \varphi_p \vert^p\, dA \\
	&= \lambda_{1,p}(\Omega) \\
	&= \lambda^N_{p}(\Omega).
\end{align*}
\end{proof}





The remainder of the section is dedicated to the proof of Theorem \ref{T6.1} by means of the next two propositions. For their statements, it is convenient to introduce the subsets of $\mathscr{H}$:
\begin{align*}
	\mathscr{H}_0 &= \left\{H \in \mathscr{H} \colon \ker(H) \ \text{is a half-plane} \right\}, \\
	\s(\mathscr{H}_0) &= \left\{H \in \mathscr{H}_0 \colon \lVert H \rVert = 1\right\}.
\end{align*}

We also need this result that comes directly from Proposition~\ref{P6}:
\begin{cor}\label{propbound}
For any $H \in \s(\mathscr{H})$, there is $H_0 \in \s(\mathscr{H}_0)$ such that $H \geq H_{0}$.
\end{cor}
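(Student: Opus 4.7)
The plan is to read off the claim directly from Proposition \ref{P6}. Given $H \in \s(\mathscr{H})$, we have $\|H\| = 1$, so Proposition \ref{P6} furnishes a rotation matrix $A$ with
\[
(H \circ A)(x,y) \;\geq\; \|H\|\, y^+ \;=\; y^+ \qquad \text{for every } (x,y) \in \R^2.
\]
I would then propose the candidate $H_0(u,v) := \bigl((A^T(u,v))_2\bigr)^+$, where $(\,\cdot\,)_2$ denotes the second coordinate. Equivalently, if $\pi(x,y) = y^+$, then $H_0 = \pi \circ A^T$.

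The next step is to verify that $H_0$ does the job. First, $H_0 \in \mathscr{H}$: the function $y \mapsto y^+$ is non-negative, convex and positively $1$-homogeneous on $\R$, so its composition with the linear map $A^T$ inherits all three properties. Second, substituting $(u,v) = A(x,y)$ in the inequality from Proposition \ref{P6} gives $H(u,v) \geq (A^T(u,v))_2^+ = H_0(u,v)$ for every $(u,v) \in \R^2$, so $H \geq H_0$. Third, the kernel
\[
\ker(H_0) \;=\; \bigl\{(u,v) \in \R^2 :\, (A^T(u,v))_2 \leq 0\bigr\}
\]
is the preimage under the linear isometry $A^T$ of the closed lower half-plane $\{y \leq 0\}$, hence itself a closed half-plane; thus $H_0 \in \mathscr{H}_0$. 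Finally, since $A^T$ is an orthogonal transformation it preserves the Euclidean unit circle, so
\[
\|H_0\| \;=\; \max_{|(u,v)|=1} (A^T(u,v))_2^+ \;=\; \max_{|(x,y)|=1} y^+ \;=\; 1,
\]
which places $H_0$ in $\s(\mathscr{H}_0)$.

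There is really no hard step here: the entire content is packaged into Proposition \ref{P6}, and the only care required is to transport the model inequality $H_A \geq y^+$ back to the original variables via $A^T$ and to observe that rotating a half-plane yields a half-plane of the same operator norm. Once those are checked, the corollary follows immediately.
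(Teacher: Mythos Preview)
Your proof is correct and follows essentially the same approach as the paper: both apply Proposition~\ref{P6}, transport the model inequality back through $A^T$, and take $H_0$ to be the positive part of the linear functional $\langle\cdot, A(0,1)\rangle$ (which is exactly your $(A^T(\cdot))_2^+$). Your write-up is in fact a bit more explicit than the paper's, which simply asserts that $H_0 \in \s(\mathscr{H}_0)$ is clear.
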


\begin{proof} Let $A$ be as in Proposition~\ref{P6} and define $H_{0}(x,y) = (\langle (x,y),A(0,1) \rangle)^{+}$, it is clear that $H_{0} \in \s(\mathscr{H}_0)$. We also have
\[
H(x,y) = H(AA^{T}(x,y)) = H_A(A^{T}(x,y)) \geq \|H\|(\langle A^{T}(x,y), (0,1)\rangle)^{+} = (\langle (x,y),A(0,1) \rangle)^{+} = H_{0}(x,y).
\]
\end{proof}

Next we provide the proof of the last theorem of the introduction.
\begin{proof}[Proof of Theorem \ref{T6.1}] For the equality
\begin{equation} \label{Eq2}
\Lambda^{\min}(\Omega) = \inf_{H \in \s(\mathscr{H}_0)} \lambda_{p}^{H}(\Omega),
\end{equation}
from the definition of $\Lambda_{1,p}^{\min}(\Omega)$ and Corollary \ref{propbound}, note first that
\[
\Lambda^{\min}(\Omega) \geq \inf_{H \in \s(\mathscr{H}_0)} \lambda_{p}^{H}(\Omega).
\]
For the opposite inequality, fix $H \in \s(\mathscr{H}_0)$ and let $A_{H}$ be an orthogonal matrix such that $H_{0}\circ A_{H} = H$. Now set
\[
H_{\varepsilon}(x,y) = \sqrt{\varepsilon(x^2+y^2)+(y^+)^2}
\]
for any $\varepsilon > 0$. Clearly, $(1+\varepsilon)^{-1}H_{\varepsilon} \in {\cal E}(\Omega)\cap\s(\mathscr{H})$ and, from Lemma \ref{lemacontinuity}, we have
\begin{align*}
\lim_{\varepsilon \to 0^{+}} \lambda_{1}\left(-\Delta_{p}^{(1+\varepsilon)^{-1}H_{\varepsilon} \circ A_{H}}\right) 
	&= \lim_{\varepsilon \to 0^{+}} \lambda_{p}^{(1+\varepsilon)^{-1}H_{\varepsilon} \circ A_{H}}(\Omega) \\
	&= \lim_{\varepsilon \to 0^{+}} (1+\varepsilon)^{-p}\lambda_{p}^{H_{\varepsilon} \circ A_{H}}(\Omega) \\
	&= \lim_{\varepsilon \to 0^{+}} (1+\varepsilon)^{-p} \lambda_{p}^{H_{\varepsilon}}(\Omega_{A_{H}^{T}}) \\
	&= \lambda_{p}^{H_{0}}(\Omega_{A_{H}^{T}}) \\
	&= \lambda_{p}^{H}(\Omega),
\end{align*}
so that $\Lambda^{\min}(\Omega) \leq \lambda_{p}^{H}(\Omega)$ for every $H \in \s(\mathscr{H}_0)$ and thus the equality \eqref{Eq2} occurs.

For the statement
\begin{equation} \label{Eq3}
\inf_{H \in \s(\mathscr{H}_0)} \lambda_{p}^{H}(\Omega) = \inf_{\theta \in [0, 2 \pi]} \lambda_{1,p}(0, 2 L_\theta),
\end{equation}
given any $H \in \mathscr{H}_0 \setminus \{0\}$, we have $H(x,y) = c\left( x \cos \theta + y \sin \theta \right)^+$ for some $c > 0$ and $\theta \in [0, 2 \pi]$ and so $H_A(x,y) = c y^+$ for the rotation matrix $A$ of angle $\theta - \frac{\pi}{2}$. By Property \ref{PropertyP1}, assume that $c = 1$. By Property \ref{PropertyP4}, Proposition \ref{P8} and Theorem \ref{T1.1}, we have
\[
\lambda_{p}^{H}(\Omega) = \lambda_{p}^{H_A}(\Omega_A) = \lambda^{1,0}_{p}(0, L_{\Omega_A}) = \lambda_{1,p}(0, 2 L_{\Omega_A}).
\]
On the other hand, since $A(0,1) = (\cos \theta, \sin \theta)$, we can describe $L_{\Omega_A}$ in an alternative way. From the invariance of Lebesgue measure under orthogonal transformations, we have
\begin{align*}
    L_{\Omega_A} &= \sup_{x \in \R}\left\{ |J| \colon J \subset (\{x\}\times\R) \cap \Omega_A \ \text {and} \ J \ \text{is connected} \right\} \\
    &= \sup_{v \in \R^2}\sup \left\{ |J| \colon J \subset \{t(0,1) + v \colon t \in \R\} \cap \Omega_A \ \text{and} \ J  \ \text{is connected}\right\} \\
    &= \sup_{v \in \R^2}\sup \left\{ |J| \colon A J \subset A \{t(0,1) + v \colon t \in \R\} \cap \Omega \ \text{and} \ J  \ \text{is connected}\right\} \\
    &= \sup_{v \in \R^2}\sup \left\{ |A J| \colon A J \subset \{t A(0,1) + Av \colon t \in \R\} \cap \Omega \ \text{and} \ A^{T}J  \ \text{is connected}\right\}\\
    &=\sup_{v \in \R^2}\sup \left\{ |J| \colon J \subset \{t(\cos{\theta},\sin{\theta}) + A^{T}v \colon t \in \R\}\cap \Omega \ \text{and} \ J  \ \text{is connected}\right\} \\
    &= \sup_{v \in \R^2}\sup \left\{ |J| \colon J \subset \{t(\cos{\theta},\sin{\theta}) + v \colon t \in \R\}\cap \Omega \ \text{and} \ J  \ \text{is connected}\right\}  \\
    &= L_\theta.
\end{align*}
Consequently,
\[
\lambda_{p}^{H}(\Omega) = \lambda_{1,p}(0, 2 L_\theta)
\]
for every $H \in \mathscr{H}_0 \setminus \{0\}$ and the equality \eqref{Eq3} follows. In particular, the positivity of $\Lambda^{\min}(\Omega)$ comes from $\inf_{\theta \in [0, 2 \pi]} \lambda_{1,p}(0, 2 L_\theta) > 0$.

Finally, the last infimum is attained for some $\theta_0 \in [0, 2 \pi]$ if and only if the function $\theta \mapsto L_\theta$ has a global maximum, but this just means that $\Omega$ has optimal anisotropic design. In this case, it is clear that all non-zero multiples of $-\Delta_p^{H_0}$, where $H_0(x,y) = (x\cos{\theta_0} + y\sin{\theta_0})^+$, are extremal operators for the first inequality in \eqref{SSE1}.
\end{proof}

\n {\bf Acknowledgments:}
The first author was supported by Grant RYC2021-031572-I, funded by the Ministry of Science and Innovation / State Research Agency / 10.13039 / 501100011033 and by the E.U. Next Generation EU/Recovery, Transformation and Resilience Plan, and by Grant PID2022-136320NB-I00 funded by the Ministry of Science and Innovation.
The second author was partially supported by CAPES (PROEX 88887.712161/2022-00) and the third author was partially supported by CNPq (PQ 307432/2023-8, Universal 404374/2023-9) and Fapemig (Universal APQ 01598-23).

\end{document}